\documentclass[12pt]{article}

\usepackage{lmodern}
\usepackage[T1]{fontenc}
\usepackage{amsmath}
\usepackage{amsthm}
\usepackage{amssymb}
\usepackage{url}
\usepackage{latexsym}
\usepackage[small]{titlesec}
\usepackage[small,it]{caption}
\usepackage{mathtools} 

\usepackage[table]{xcolor}
\usepackage[margin=0.75in]{geometry}
\usepackage{mathrsfs}
\usepackage{graphicx}

\usepackage{tikz}
\usepackage{tkz-graph}
\usepackage{tkz-berge}
\usetikzlibrary{decorations.pathreplacing,decorations.markings}
\usetikzlibrary{cd}
\usepackage{diagbox}
\usepackage{pdflscape}

\usepackage[square,comma,numbers,sort&compress]{natbib}
\usepackage{lineno}

\setlength{\captionmargin}{0.4in}
\setlength{\abovecaptionskip}{0pt}

\usepackage[bookmarks]{hyperref}
\hypersetup{
	colorlinks=true,
	linkcolor=black,
	anchorcolor=black,
	citecolor=black,
	urlcolor=black,
	pdfpagemode=UseThumbs,
	pdftitle={},
	pdfsubject={Combinatorics},
	pdfauthor={Everett Sullivan},
}

\newcounter{todocounter}

\newcommand{\Z}{\mathbb{Z}}

\newcommand{\N}{\mathbb{N}}

\newcommand{\abs}[1]{\lvert {#1} \rvert}

\newcommand{\sumover}[3]{\sum_{{#1}}^{{#2}}{{#3}}}

\newcommand{\ignore}[1]{}
\newcommand*{\Scale}[2][4]{\scalebox{#1}{$#2$}}%

\theoremstyle{plain}
\newtheorem{theorem}{Theorem}[section]

\newtheorem{lemma}[theorem]{Lemma}
\newtheorem{corollary}[theorem]{Corollary}

\theoremstyle{definition}
\newtheorem{definition}[theorem]{Definition}
\newtheorem{example}[theorem]{Example}

\setlength{\textwidth}{6in}
\setlength{\textheight}{8in}
\setlength{\topmargin}{0in}
\setlength{\headsep}{0.25in}
\setlength{\headheight}{0.25in}
\setlength{\oddsidemargin}{0.25in}
\setlength{\evensidemargin}{0.25in}
\makeatletter
\newfont{\footsc}{cmcsc10 at 8truept}
\newfont{\footbf}{cmbx10 at 8truept}
\newfont{\footrm}{cmr10 at 10truept}
\pagestyle{plain}

\renewenvironment{abstract}{
	\begin{list}{}%
	{\setlength{\rightmargin}{1in}%
	\setlength{\leftmargin}{1in}}%
	\item[]\ignorespaces\begin{small}}%
	{\end{small}\unskip\end{list}%
}

\newcommand*\patchAmsMathEnvironmentForLineno[1]{%
  \expandafter\let\csname old#1\expandafter\endcsname\csname #1\endcsname
  \expandafter\let\csname oldend#1\expandafter\endcsname\csname end#1\endcsname
  \renewenvironment{#1}%
     {\linenomath\csname old#1\endcsname}%
     {\csname oldend#1\endcsname\endlinenomath}}%
\newcommand*\patchBothAmsMathEnvironmentsForLineno[1]{%
  \patchAmsMathEnvironmentForLineno{#1}%
  \patchAmsMathEnvironmentForLineno{#1*}}%
\AtBeginDocument{%
\patchBothAmsMathEnvironmentsForLineno{equation}%
\patchBothAmsMathEnvironmentsForLineno{align}%
\patchBothAmsMathEnvironmentsForLineno{flalign}%
\patchBothAmsMathEnvironmentsForLineno{alignat}%
\patchBothAmsMathEnvironmentsForLineno{gather}%
\patchBothAmsMathEnvironmentsForLineno{multline}%
}

\newpagestyle{main}[\small]{
	\headrule
	\sethead[\usepage][][]
	{\sc Self Avoiding Random Walks}{}{\usepage}
}

\setlength{\parindent}{0pt}
\setlength{\parskip}{1.5ex}

\title{\sc Exactly-Solvable Self-Trapping Lattice Walks. Part I: Trapping in Ladder Graphs}

\author{%
	Alexander R. Klotz\\
	\small \texttt{alex.klotz@csulb.edu} \\
	\small Department of Physics and Astronomy\\[-3pt]
	\small California State University, Long Beach\\[-3pt]
	\small 1250 Bellflower Blvd., Long Beach, CA, 90840
	\and
	Everett Sullivan\\
	\small \texttt{everetts@vt.edu} \\
	\small Department of Mathematics\\[-3pt]
	\small Virginia Tech\\[-3pt]
	\small Blacksburg, Virginia USA \\
	\small Department of Mathematics\\[-3pt]
	\small Clayton State University\\[-3pt]
	\small Morrow, Georgia USA
}

\titleformat{\section}{\large\sc}{\thesection.}{1em}{}
\date{}

	\tikzset{
	  on each segment/.style={
	    decorate,
	    decoration={
	      show path construction,
	      moveto code={},
	      lineto code={
	        \path [#1]
	        (\tikzinputsegmentfirst) -- (\tikzinputsegmentlast);
	      },
	      curveto code={
	        \path [#1] (\tikzinputsegmentfirst)
	        .. controls
	        (\tikzinputsegmentsupporta) and (\tikzinputsegmentsupportb)
	        ..
	        (\tikzinputsegmentlast);
	      },
	      closepath code={
	        \path [#1]
	        (\tikzinputsegmentfirst) -- (\tikzinputsegmentlast);
	      },
	    },
	  },
	  mid arrow/.style={postaction={decorate,decoration={
	        markings,
	        mark=at position .7 with {\arrow[#1]{stealth}}
	      }}},
	}
 
\begin{document}
\maketitle

\pagestyle{main}

	\begin{abstract}

A growing self-avoiding walk (GSAW) is a stochastic process that starts from the origin on a lattice and grows by occupying an unoccupied adjacent lattice site at random. A sufficiently long GSAW will reach a state in which all adjacent sites are already occupied by the walk and become trapped, terminating the process. It is known empirically from simulations that on a square lattice, this occurs after a mean of 71 steps. In Part I of a two-part series of manuscripts, we consider simplified lattice geometries only two sites high (``ladders'') and derive generating functions for the probability distribution of GSAW trapping. We prove that a self-trapping walk on a square ladder will become trapped after a mean of 17 steps, while on a triangular ladder trapping will occur after a mean of 941/48$\approx$19.6 steps. We discuss additional implications of our results for understanding trapping in the ``infinite'' GSAW.
		
	\end{abstract}
	
	\section{Introduction and Background}\label{intro}

The self-avoiding walk (SAW) is a random walk that cannot visit the same location more than once. The statistics of SAWs are used to model the physics of polymer chains with excluded volume interactions \cite{mckenzie1971shape}. The universal quantities of SAWs, such as the growth exponent, size-scaling exponent, and finite-length corrections can be calculated by full enumerations of SAWs \cite{jensen2004enumeration,clisby2007self} or by stochastic generation of large ensembles \cite{clisby2010accurate} on various lattices. These universal SAW properties make predictions for the behavior of real polymers that can be observed in experiment, for example the dependence of radius of gyration on molecular weight \cite{cotton1980polymer}. Combinatorial techniques used to enumerate SAWs include the lace expansion method \cite{clisby2007self}, transfer matrix methods \cite{jensen2004enumeration}, and the length-doubling method \cite{schram2011exact}.

SAW statistics assume that each walk of a given length is equally likely to occur. An alternative ensemble is that of the growing self-avoiding walk (GSAW), a stochastic process in which a walk grows from the origin on a lattice by taking a random step to an adjacent unoccupied site. The probability of a given walk occurring is history-dependent and GSAWs have different statistics from SAWs \cite{lyklema1984growing}. One feature of GSAWs is trapping: a walk will eventually grow to a configuration at which there are no unoccupied adjacent sites, and the process terminates. 

It is known empirically that a growing self-avoiding walk on a square lattice will become trapped after a mean of approximately 71 steps, with a positively skewed distribution peaked at around 35 steps, and that walks are more likely to become trapped after an odd number of steps than even \cite{hemmer1984average}. It has been argued based on heuristic reasoning that walks become trapped with probability 1 \cite{hemmer1986trapping}. In addition, the trapping statistics of GSAWs have been explored on other lattices such as the triangular, honeycomb, and simple cubic lattices \cite{renner1996self}.

The mean trapping length is only known through stochastic simulations of GSAWs, there is no exact or closed-form expression for the probability distribution of trapping lengths. Deriving the mean trapping length requires knowledge of the probability of every possible self-avoiding walk. Enumerations are known up to, coincidentally, 71 steps, for which there are 4,190,893,020,903,935,054,619,120,005,916 distinct walks on the square lattice \cite{jensen2004enumeration}. It would not be feasible to calculate the probability of each walk in the ensemble. It is, in principle, possible to calculate the probability of every trapped configuration until the median and mode of the trapping probability distribution are surpassed, but the extensive computation required for that calculation would not provide sufficient insight to justify it.

The motivation behind this manuscript and its followup \cite{saws-part2} is to use a simplified system that allows exact solutions of the trapping behavior of growing self-avoiding walks through the use of combinatorial methods. We consider restricted lattices upon which growing self-avoiding walks may be trapped. Reduced latices prevent ``combinatorial explosion'' and allow a realistic number of cases to be considered, such that parameters such as the mean trapping length may be exactly solved. In particular, we consider lattices with square or triangular connectivity that only span two sites in one dimension, while remaining infinite or semi-infinite in the other, which we refer to as ladders. We derive the mean trapping length and other relevant metrics for these restricted lattices.

Beyond the mean trapping length, there are a number of empirical features of GSAW trapping statistics that we may exactly solve. It is known that on a square lattice, walks are more likely to become trapped after an odd number of steps, while the parity asymmetry is not observed in the triangular lattice, which has a mean of about 77 steps. The trapping probability distribution has a global maximum before decaying exponentially, with a different decay constant for square and triangular lattices \cite{renner1996self}. We recently explored trapping in GSAWs that were biased to take steps adjacent to occupied sites, modelling a poor-solvent interaction in polymer physics \cite{klotz}. A non-monotonic dependence of the trapping length on the bias strength was observed, with a global minimum for weak bias before it diverges exponentially. In this work, we seek to exactly solve these empirical phenomenon in a simplified system.

Confined walks adopt a highly extended configuration, and the statistics of confined random walks may be used to understand the physics of confined polymers, for example the mean extension of chains in a reptation tube \cite{odijk} or of DNA confined in nanochannel devices for genomic imaging \cite{dorfman}. Exactly-solvable models have proved insightful in this area. For example, a mapping between the physics of confined DNA in the extended de Gennes regime and the exactly solvable one-dimensional Domb-Joyce model has allowed an exact derivation of a molecule's mean extension \cite{mehlig}. In addition to deriving the trapping length and distribution of confined GSAWs, we also derive exact solutions for their mean extension to complement existing work in this area.

In this paper, Part I, we explore lattices that are two sites high and use pencil-and-paper combinatorial methods to derive a generating function for the trapping probability distribution. We prove that trapping is inevitable and derive the exact probability distribution and trapping length for the square and triangular lattice, the discussion of which begins on Page 29. In a forthcoming paper, Part II~\cite{saws-part2}, we use finite state machines and related methods to prove that the generating functions for the trapping probabilities on square lattices of arbitrary height are rational and we explicitly compute the trapping probability distribution for square lattices that are three and four sites high.

\section{Derivations}


		
	\subsection{Definitions and Notation}
	
	\begin{definition}
	    Given two graphs $G$ and $H$ we define the graph $G\Box H$, called the box product, to be the graph with vertex set $V = \{ (g,h) \mid g \in V(G) \text{ and } h \in V(H)\}$, and edge set
	    \begin{align*}
	        E = \{ ((g_{1},h_{1}),(g_{2},h_{2})) \mid & \left(h_{1} = h_{2} \text{ and } (g_{1},g_{2}) \in E(G)\right) \\
	                                                  & \text{ or } \left(g_{1} = g_{2} \text{ and } (h_{1},h_{2}) \in E(H)\right)\}
	    \end{align*}
	\end{definition}
	
	\begin{definition}
		We define the \textbf{path graph}, $P_{n}$ to be the simple graph with vertex set $V = \{1,\cdots, n\}$ and edge set $E = \{(a,b) \mid a,b \in V \text{ and } \abs{a-b} = 1\}$.
		
		Then the \textbf{ladder graph}, $L_{n}$ is $P_{2} \Box P_{n}$.
	\end{definition}
	
	\begin{example}
		Here are some examples of ladder graphs
		\begin{center}
			\begin{tikzpicture}[scale=1]
				\draw[help lines] (0,0) grid (0,1);
				\filldraw (0,0) circle (.1);
				\filldraw (0,1) circle (.1);
				\node at (0,-.5) {$L_{1}$};
			\end{tikzpicture}
			\qquad
			\begin{tikzpicture}[scale=1]
				\draw[help lines] (0,0) grid (1,1);
				\filldraw (0,0) circle (.1);
				\filldraw (1,0) circle (.1);
				\filldraw (0,1) circle (.1);
				\filldraw (1,1) circle (.1);
				\node at (0.5,-.5) {$L_{2}$};
			\end{tikzpicture}
			\qquad
			\begin{tikzpicture}[scale=1]
				\draw[help lines] (0,0) grid (3,1);
				\filldraw (0,0) circle (.1);
				\filldraw (1,0) circle (.1);
				\filldraw (2,0) circle (.1);
				\filldraw (3,0) circle (.1);
				\filldraw (0,1) circle (.1);
				\filldraw (1,1) circle (.1);
				\filldraw (2,1) circle (.1);
				\filldraw (3,1) circle (.1);
				\node at (1.5,-.5) {$L_{4}$};
			\end{tikzpicture}
		\end{center}
	\end{example}
	
	We will also consider the cases where the ladder graph is unbounded on one side.
	Thus we make the follow definitions
	\begin{definition}
		We define $P^{\ast}_{\infty}$ to be the simple graph with vertex set $V = \{a \mid a\in \Z \text{ and } a > 0\}$ with edge set $E = \{(a,b) \mid a,b \in V \text{ and } \abs{a-b} = 1\}$.
		
		We define $P_{\infty}$ to be the simple graph with vertex set $V = \Z$ with edge set $E = \{(a,b) \mid a,b \in V \text{ and } \abs{a-b} = 1\}$.
		
		Then we define $L^{\ast}_{\infty} = P_{2} \Box P^{\ast}_{\infty}$ and $L_{\infty} = P_{2} \Box P_{\infty}$
	\end{definition}
	
	\begin{example}
		The defined graphs are as follows:
		\begin{center}
			\begin{tikzpicture}[scale=1]
				\draw[help lines] (0,0) grid (5,1);
				\draw[help lines] (5,1) -- (5.25,1);
				\draw[help lines] (5,0) -- (5.25,0);
				\draw (5.5,0.5) node {$\scriptstyle\cdots\,$};
				\filldraw (0,0) circle (.1);
				\filldraw (1,0) circle (.1);
				\filldraw (2,0) circle (.1);
				\filldraw (3,0) circle (.1);
				\filldraw (4,0) circle (.1);
				\filldraw (5,0) circle (.1);
				\filldraw (0,1) circle (.1);
				\filldraw (1,1) circle (.1);
				\filldraw (2,1) circle (.1);
				\filldraw (3,1) circle (.1);
				\filldraw (4,1) circle (.1);
				\filldraw (5,1) circle (.1);
				\node at (2.5,-.5) {$L^{\ast}_{\infty}$};
			\end{tikzpicture}
		\end{center}
		\begin{center}
			\begin{tikzpicture}[scale=1]
				\draw[help lines] (0,0) grid (4,1);
				\draw[help lines] (4,1) -- (4.25,1);
				\draw[help lines] (4,0) -- (4.25,0) ;
				\draw[help lines] (0,1) -- (-.25,1);
				\draw[help lines] (0,0) -- (-.25,0) ;
				\draw (4.5,0.5) node {$\scriptstyle\cdots\,$};
				\draw (-.5,0.5) node {$\scriptstyle\cdots\,$};
				\filldraw (0,0) circle (.1);
				\filldraw (1,0) circle (.1);
				\filldraw (2,0) circle (.1);
				\filldraw (3,0) circle (.1);
				\filldraw (4,0) circle (.1);
				\filldraw (0,1) circle (.1);
				\filldraw (1,1) circle (.1);
				\filldraw (2,1) circle (.1);
				\filldraw (3,1) circle (.1);
				\filldraw (4,1) circle (.1);
				\node at (2,-.5) {$L_{\infty}$};
			\end{tikzpicture}
		\end{center}
	\end{example}
	
	We will consider Growing Self Avoiding Random Walks (GSAW) on these graphs.
	
	\begin{definition}
	    A \textbf{Growing Self Avoiding Random Walk} (GSAW) on a graph $G$ is a (possibly finite) sequence of vertices $W = \{v_{i}\}_{1}^{\infty}$ such that for all $i$, the vertices $v_{i}$ and $v_{i+1}$ are adjacent, and $v_{i} \neq v_{j}$ for any $i\neq j$.
	    
	    If $W$ is finite then additionally we say that the length of the path is the number of vertices in the path, and the last vertex must have all neighbors belonging to the path as well.
	\end{definition}
	
	We may think of this as starting at some vertex and randomly choosing another vertex to walk to that we have not been to before.
	If we arrive at a vertex where every neighbor is already in the path, then we must stop.
	
	Any step in a path of a lattice graph can be identified as a step in the positive $x$ direction which we will denote with the symbol $E$, a step in the negative $x$ direction which we will denote with the symbol $W$, a step in the negative $y$ direction which we will denote with the symbol $S$, or step in the positive $y$ direction which we will denote with the symbol $N$.
	
	Thus we may identify a path in a lattice graph by its starting point and a sequence of steps.
	On $L^{\ast}_{\infty}$ we will only consider GSAWs that start at $(1,1)$.
	Observe that by symmetry that this is same as starting at $(1,2)$.
	
	\begin{figure}
		\begin{center}
			\begin{tikzpicture}[scale=1]
				\draw[help lines] (0,0) grid (3,1);
				\draw[help lines] (3,1) -- (3.25,1);
				\draw[help lines] (3,0) -- (3.25,0);
				\draw (3.5,0.5) node {$\scriptstyle\cdots\,$};
				\filldraw (.125,.125) -- (-.125,.125) -- (-.125,-.125) -- (.125,-.125) -- cycle;
				\filldraw (1,0) ++(0,.125) -- ++(-.125,-.125) -- ++(.125,-.125) -- ++(.125,.125) -- ++(-.125,.125);
				\filldraw (2,0) circle (.1);
				\filldraw (3,0) circle (.1);
				\filldraw (0,1) circle (.1);
				\filldraw (1,1) circle (.1);
				\filldraw (2,1) circle (.1);
				\filldraw (3,1) circle (.1);
				\draw[ultra thick,postaction={on each segment={mid arrow=black}}] (0,0) -- (0,1) -- (1,1) -- (2,1) -- (2,0) -- (1,0);
				\node at (1.5,-.5) {$NEESW$};
			\end{tikzpicture}
		\end{center}
		\caption{An example of a GSAW on $L^{\ast}_{\infty}$.
			The starting position is denoted by a square and the ending position by a diamond.}
	\end{figure}
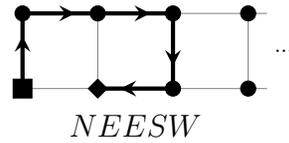
	
	\begin{definition}
		Let $P$ be a path on $L_{n},L^{\ast}_{\infty},$ or $L_{\infty}$.
		We define the \textbf{length} of a path to be the number of steps taken.
		We define the \textbf{width} of a path to be the (positive) distance between the largest and smallest $x$-coordinates of points on the path.
	\end{definition}
	
\subsection{Unbiased Square Ladders}

\subsubsection{Precursor Functions}
	
	\begin{definition}
		We define a hook path of order $n$ on $L^{\ast}_{\infty}$ to be the path
		\begin{displaymath}
			H^{\ast}_{n} = \underbrace{E\cdots E}_{\text{$n$ steps}}N\underbrace{W\cdots W}_{\text{$n$ steps}}.
		\end{displaymath}
	\end{definition}
	
	We observe that for all $n > 0$, $H^{\ast}_{n}$ is a GSAW.
	Given a GSAW that selects neighbors uniformly at random, the probability of a GSAW using a  being $H^{\ast}_{n}$ is
	\begin{displaymath}
		\left(\frac{1}{2}\right)^{n}\cdot\frac{1}{2}\cdot\frac{1}{2} = \left(\frac{1}{2}\right)^{n+2}.
	\end{displaymath}
	The length of $H^{\ast}_{n}$ is $2n+1$ and its width is $n$
	
	\begin{lemma}
		Let $h_{\ell,w}$ be the probability that a GSAW on $L^{\ast}_{\infty}$ terminates as a hook path of length $\ell$ and width $w$.
		Then the bivariate ordinary generating function $H^{\ast}(x,y)$ of $h_{\ell,w}$ is
		\begin{displaymath}
			H^{\ast}(x,y) = \frac{x^{3}y}{8-4x^{2}y}.
		\end{displaymath}
	\end{lemma}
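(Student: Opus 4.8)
The plan is to compute $H^{\ast}(x,y)$ directly from the definition, since the family of hook paths is parametrized by the single integer $n$, which makes every coefficient $h_{\ell,w}$ explicit. First I would record exactly which index pairs contribute. The path $H^{\ast}_{n}$ has length $2n+1$ and width $n$, the order $n$ determines the path uniquely, and distinct orders give distinct pairs, since the width $w=n$ already recovers $n$. Hence $h_{\ell,w}$ is nonzero precisely when $(\ell,w)=(2n+1,n)$ for some integer $n\geq 1$, and for such a pair the probability computed just before the lemma statement gives
\[
	h_{2n+1,\,n} = \left(\frac{1}{2}\right)^{n+2},
\]
with $h_{\ell,w}=0$ for all other pairs.

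Next I would substitute these values into the generating function $H^{\ast}(x,y)=\sum_{\ell,w} h_{\ell,w}\,x^{\ell}y^{w}$, which collapses the double sum into a single sum over $n$, and then factor out the part independent of the summation index:
\[
	H^{\ast}(x,y) = \sum_{n\geq 1}\left(\frac{1}{2}\right)^{n+2} x^{2n+1} y^{n}
	= \frac{x}{4}\sum_{n\geq 1}\left(\frac{x^{2}y}{2}\right)^{n}.
\]
The remaining sum is a geometric series with ratio $x^{2}y/2$.

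Finally I would sum the geometric series (as a formal power series, or analytically for $\abs{x^{2}y}<2$) and simplify, obtaining
\[
	H^{\ast}(x,y) = \frac{x}{4}\cdot\frac{x^{2}y/2}{1-x^{2}y/2}
	= \frac{x^{3}y}{8-4x^{2}y},
\]
as claimed. I expect no genuine obstacle here, as the argument reduces to a single geometric-series summation; the only point deserving a moment's care is confirming that each coefficient receives a contribution from at most one hook, which is immediate from the bijection $n\leftrightarrow(2n+1,n)$ noted above.
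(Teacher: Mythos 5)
Your proposal is correct and follows essentially the same route as the paper's proof: identify that the nonzero coefficients are exactly $h_{2n+1,n}=\left(\frac{1}{2}\right)^{n+2}$ for $n\geq 1$, collapse the double sum to a single sum, and evaluate the resulting geometric series. The only cosmetic difference is that the paper shifts the summation index to start at $0$ before summing, while you sum the series directly from $n=1$; both yield $\frac{x^{3}y}{8-4x^{2}y}$.
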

	
	\begin{proof}
		Since we are only considering hook paths that terminate, $H_{0}$ is not considered.
		Thus for the remaining hook paths $H^{\ast}_{n}$, each has length $2n+1$ and probability $(1/2)^{n+2}$.
		We observe that for any hook path, $\ell = 2w+1$.
		Thus we see that the length of every hook path must be odd and have minimum length 3.
		Thus if $\ell$ is even, if $\ell$ is 1 $h_{\ell,w} = 0$, $h_{1,w} = 0$, and otherwise $h_{\ell,w} = (1/2)^{(\ell+3)/2}$ otherwise.
		Thus
		\begin{align*}
			H^{\ast}(x,y) & = \sumover{\ell=0}{\infty}{\sumover{w=0}{\infty}{h_{\ell,w}x^{\ell}y^{w}}} 
			        = \sumover{w=0}{\infty}{h_{2w+1,w}x^{2w+1}y^{w}} 
			        = \sumover{k=1}{\infty}{h_{2k+1,k}x^{2k+1}y^{k}} \\
			      &  = \sumover{k=1}{\infty}{\left(\frac{1}{2}\right)^{k+2}x^{2k+1}y^{k}} 
			        = \sumover{k=0}{\infty}{\left(\frac{1}{2}\right)^{k+3}x^{2k+3}y^{k+1}} \\
			      &  = \left(\frac{x}{2}\right)^{3}y\sumover{k=0}{\infty}{\left(\frac{x^{2}y}{2}\right)^{k}} 
			        = \frac{\left(\frac{x}{2}\right)^{3}y}{1 - \frac{x^{2}y}{2}} 
			        =\boxed{\frac{x^{3}y}{8-4x^{2}y}}.
		\end{align*}
		
	\end{proof}
	
	\begin{definition}
		We define a twist path of order $n$ on $L^{\ast}_{\infty}$ be the path
		\begin{displaymath}
			T^{\ast}_{n} = \underbrace{E\cdots E}_{\text{$n$ steps}}NE.
		\end{displaymath}
	\end{definition}
	
	The probability of the first $n$ steps of a GSAW being $T^{\ast}_{n}$ is
	\begin{displaymath}
		\left(\frac{1}{2}\right)^{n+2} \quad \text{if $n \geq 1$, and} \quad \frac{1}{2} \quad \text{if $n = 0$.}
	\end{displaymath}
	The length of $T^{\ast}_{n}$ is $n+2$ and its width is $n+1$.
	
	\begin{lemma}
		Let $t_{\ell,w}$ be the probability that the first $\ell$ steps of a GSAW is a twist path of length $\ell$ and width $w$.
		Then the bivariate ordinary generating function $T^{\ast}(x,y)$ of $t_{\ell,w}$ is
		\begin{displaymath}
			T^{\ast}(x,y) = \frac{x^{2}y}{2} + \frac{x^{3}y^{2}}{8-4xy}.
		\end{displaymath}
	\end{lemma}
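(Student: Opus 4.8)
The plan is to mirror the computation carried out for the hook generating function, exploiting the rigid relationship between the length and width of a twist path to collapse the defining double sum into a single geometric series, while isolating the order-zero path as an exceptional term.

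First I would record which coefficients $t_{\ell,w}$ can be nonzero. Since $T^{\ast}_{n}$ has length $\ell = n+2$ and width $w = n+1$, every twist path satisfies $w = \ell - 1$, so $t_{\ell,w} = 0$ unless $\ell \geq 2$ and $w = \ell-1$. This lets me discard one of the two summation indices: writing $n = w-1 = \ell-2$, the double sum $\sum_{\ell}\sum_{w} t_{\ell,w}x^{\ell}y^{w}$ reduces to a single sum indexed by the order $n \geq 0$, with monomial $x^{n+2}y^{n+1}$.

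Next I would substitute the probabilities already established just before the statement. For $n \geq 1$ the probability that the first $\ell = n+2$ steps of the GSAW form $T^{\ast}_{n}$ is $\para{1/2}^{n+2}$, whereas for $n = 0$ the path $NE$ occurs with probability $1/2$ rather than $\para{1/2}^{2}$. I would peel off this $n=0$ term, which contributes $\tfrac{1}{2}x^{2}y$, and collect the tail $\sum_{n\geq 1}\para{1/2}^{n+2}x^{n+2}y^{n+1}$. Reindexing by $k = n-1 \geq 0$ rewrites the tail as $\tfrac{x^{3}y^{2}}{8}\sum_{k\geq 0}\para{xy/2}^{k}$, a geometric series summing to $\tfrac{x^{3}y^{2}}{8-4xy}$; adding the two pieces gives the claimed closed form.

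The only genuine subtlety, and the single place where care is required, is the exceptional order-zero case. Starting from $(1,1)$, the step $N$ reaches $(1,2)$, a vertex on the left boundary of $L^{\ast}_{\infty}$ whose only unoccupied neighbor is then $(2,2)$, so the final $E$ step is forced with probability $1$ rather than $1/2$. This is exactly why $t_{2,1} = 1/2$ breaks the otherwise uniform pattern $t_{\ell,\ell-1} = \para{1/2}^{\ell}$ valid for $\ell \geq 3$, and why it must be summed separately, producing the standalone $\tfrac{x^{2}y}{2}$ summand in $T^{\ast}(x,y)$; the remaining bookkeeping is routine.
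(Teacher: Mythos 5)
Your proposal is correct and follows essentially the same route as the paper's proof: collapse the double sum via the relation $w=\ell-1$, peel off the exceptional order-zero term $t_{2,1}x^{2}y=\tfrac{x^{2}y}{2}$, and sum the remaining geometric series to $\tfrac{x^{3}y^{2}}{8-4xy}$. Your added justification of why $t_{2,1}=1/2$ (the forced $E$ step from the corner vertex $(1,2)$) is a detail the paper states without proof, but it does not change the argument.
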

	
	\begin{proof}
		We observe that the minimum length of a twist path is 2, and that for any twist path that $\ell = 1 + w$.
		Thus
				\begin{align*}
			T^{\ast}(x,y) & = \sumover{\ell=0}{\infty}{\sumover{w=0}{\infty}{t_{\ell,w}x^{\ell}y^{w}}}
			        = \sumover{\ell=2}{\infty}{t_{\ell,\ell-1}x^{\ell}y^{\ell-1}} 
			       = t_{2,1}x^{2}y + \sumover{\ell=3}{\infty}{t_{\ell,\ell-1}x^{\ell}y^{\ell-1}} \\
			       & = t_{2,1}x^{2}y + \sumover{k=0}{\infty}{t_{k+3,k+2}x^{k+3}y^{k+2}} 
			        = \frac{x^{2}y}{2} + \sumover{k=0}{\infty}{\left(\frac{1}{2}\right)^{k+3}x^{k+3}y^{k+2}} 
			         \\
			       &= \frac{x^{2}y}{2} + \frac{x^{3}y^{2}}{8}\sumover{k=0}{\infty}{\left(\frac{xy}{2}\right)^{k}} = \frac{x^{2}y}{2} + \frac{\left(\frac{x^{3}y^{2}}{8}\right)}{1 - \frac{xy}{2}} 
			        =\boxed{ \frac{x^{2}y}{2} + \frac{x^{3}y^{2}}{8-4xy}}.
		\end{align*}
	\end{proof}
		

	\begin{figure}
		\begin{center}
			\begin{tikzpicture}[scale=1]
				\draw[help lines] (0,0) grid (4,1);
				\draw[help lines] (4,1) -- (4.25,1);
				\draw[help lines] (4,0) -- (4.25,0);
				\draw (4.5,0.5) node {$\scriptstyle\cdots\,$};
				\fill[black!50!white] (-.25 ,-.25) rectangle (2.5,1.25);
				\draw[very thick] (-.25 ,-.25) rectangle (2.5,1.25);
				\filldraw (.125,.125) -- (-.125,.125) -- (-.125,-.125) -- (.125,-.125) -- cycle;
				\filldraw (3,1) ++(0,.125) -- ++(-.125,-.125) -- ++(.125,-.125) -- ++(.125,.125) -- ++(-.125,.125);
				\filldraw (1,0) circle (.1);
				\filldraw (2,0) circle (.1);
				\filldraw (3,0) circle (.1);
				\filldraw (4,0) circle (.1);
				\filldraw (0,1) circle (.1);
				\filldraw (1,1) circle (.1);
				\filldraw (2,1) circle (.1);
				\filldraw (4,1) circle (.1);
				\draw[ultra thick,postaction={on each segment={mid arrow=black}}] (0,0) -- (1,0) -- (2,0) -- (2,1) -- (3,1);
			\end{tikzpicture}
		\end{center}
		\caption{If the first $n$ steps form $T^{\ast}_{n}$, then the remainder of the path must be another GSAW since the path can't move back, and so by symmetry it is in the same starting position.}
		\label{figure:one-sided-ladder}
	\end{figure}
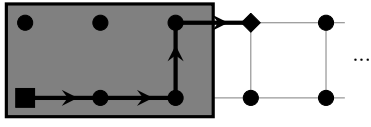
	
\subsubsection{The One Sided Case}
	
	\begin{theorem}
		Let $p_{\ell,w}$ be the probability that a GSAW on $L^{\ast}_{\infty}$ terminates having taken $\ell$ steps and a width of $w$.	
		Then the bivariate ordinary generating function $L^{\ast}_{\infty}(x,y)$ of $p_{\ell,w}$ is
		\begin{displaymath}
			L^{\ast}_{\infty}(x,y) = \frac{x^{3} y (x y - 2)}{(x^{2}y - 2)(x^{3}y^{2} -4x^{2}y - 4 xy + 8)}.
		\end{displaymath}
	\end{theorem}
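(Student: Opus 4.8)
The plan is to exploit the renewal structure illustrated in Figure~\ref{figure:one-sided-ladder}: a terminating GSAW on $L^{\ast}_{\infty}$ decomposes uniquely into a (possibly empty) sequence of twist paths followed by a single terminating hook path. First I would establish this decomposition combinatorially. Starting from the corner at $(1,1)$, the walk takes a run of East steps $E^{n}$ with $n \geq 0$ and then must turn North (stepping East forever has probability zero and contributes nothing to the generating function). If $n = 0$ the North step is forced to be followed by an East step, giving the twist $T^{\ast}_{0}$. If $n \geq 1$, then after $E^{n}N$ the walker sits at $(n+1,2)$ with exactly two free neighbors: stepping West forces every subsequent step to be West as well and produces the terminating hook $H^{\ast}_{n}$, while stepping East produces the twist $T^{\ast}_{n}$. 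Thus each visit to a corner emits exactly one segment, which is either a terminal hook or a twist.

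The key structural point is that a twist resets the problem. After any twist $T^{\ast}_{n}$ the column behind the walker (column $n+1$) is occupied on both rails, so it acts as a wall: on a ladder one can pass between the regions only through that column, hence the walker can never return west of it and is confined to the sub-ladder to its right, sitting at a corner with two free neighbors. By the up--down symmetry of $P_{2}$ this configuration is isomorphic to the original start, so the continuation is an independent copy of the whole process with unchanged hook and twist probabilities. I would then verify that length and width are additive across the decomposition: lengths add because consecutive segments share only a vertex and not a step, while widths add because consecutive segments overlap in exactly one column (the reset origin coincides with the last column of the preceding segment), so the width $n+1$ of $T^{\ast}_{n}$ contributes additively to the total horizontal extent.

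Granting independence, additivity, and the sequence structure, the bivariate generating function factors as a geometric series in the twist generating function times the hook generating function,
\begin{displaymath}
	L^{\ast}_{\infty}(x,y) = \sumover{k=0}{\infty}{\para{T^{\ast}(x,y)}^{k}} \cdot H^{\ast}(x,y) = \frac{H^{\ast}(x,y)}{1 - T^{\ast}(x,y)}.
\end{displaymath}
Substituting the two precursor lemmas and simplifying, the denominator collects to $1 - T^{\ast}(x,y) = (x^{3}y^{2} - 4x^{2}y - 4xy + 8)/(4(2-xy))$, and after cancelling the common factors of $4$ and using the two sign changes $2-xy = -(xy-2)$ and $2-x^{2}y = -(x^{2}y-2)$ one recovers the stated closed form. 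As a consistency check, evaluating at $x = y = 1$ gives $H^{\ast}(1,1)/(1 - T^{\ast}(1,1)) = (1/4)/(1 - 3/4) = 1$, confirming that the probabilities sum to one and hence that trapping occurs almost surely.

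The main obstacle is the first paragraph, not the algebra: one must argue rigorously that the corner-to-corner decomposition is both exhaustive and unique, in particular that turning West after $E^{n}N$ really forces a complete hook because every subsequent West step is the walker's only free option, and that turning East genuinely isolates the walker behind a fully occupied column. Secondary care is needed to confirm the width bookkeeping at the shared column and to dismiss the non-terminating straight-East walk as a measure-zero event that does not enter the generating function.
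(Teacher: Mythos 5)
Your proposal is correct and follows essentially the same route as the paper: the paper encodes the identical hook/twist renewal structure as the recursion $L^{\ast}_{\infty}(x,y) = T^{\ast}(x,y)L^{\ast}_{\infty}(x,y) + H^{\ast}(x,y)$ and solves for $L^{\ast}_{\infty}$, which is just the rolled-up form of your geometric series $\sum_{k\geq 0}\bigl(T^{\ast}(x,y)\bigr)^{k}H^{\ast}(x,y)$. Your explicit verification of the forced hook completion, the occupied-column reset, and length/width additivity merely spells out what the paper delegates to Figure~\ref{figure:one-sided-ladder} and a one-line convolution identity, and your algebraic simplification and the check $L^{\ast}_{\infty}(1,1)=1$ match the paper's computations.
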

	
	\begin{proof}
		We first consider an arbitrary GSAW on $L^{\ast}_{\infty}$.
		We observe that with probability 1, the GSAW makes a $N$ step at some point, since the only way to avoid such a step is an infinite series of $E$ steps which has probability 0 of happening.
		After the GSAW makes the $N$ step, it must either take an $E$ step or $W$ step.
		If it takes a $W$ step then it forms a hook path and terminates.
		If it takes an $E$ step then it forms a twist path and must continue.
		However, at the end of the twist path, the remaining spaces reachable from the current position is equivalent to starting a new GSAW, as seen in figure \ref{figure:one-sided-ladder}.
		
		Thus every GSAW on $L^{\ast}_{\infty}$ is either a hook path or a twist path followed by another GSAW on $L^{\ast}_{\infty}$.
		
		Let $p_{\ell,w}$ be the probability that a GSAW on $L^{\ast}_{\infty}$ terminates with length $\ell$ and width $w$.
		Then
		\begin{displaymath}
			p_{\ell,w} = \sumover{i=0}{\ell}{\sumover{j=0}{\infty}t_{i,j}p_{\ell - i,w - j}} + h_{\ell,w}.
		\end{displaymath}
		Thus
		\begin{displaymath}
			L^{\ast}_{\infty}(x,y) = T^{\ast}(x,y)L^{\ast}_{\infty}(x,y) + H^{\ast}(x,y).
		\end{displaymath}
		Solving for $L^{\ast}_{\infty}(x,y)$ we get that
		\begin{displaymath}
			L^{\ast}_{\infty}(x,y) = \frac{H^{\ast}(x,y)}{1-T^{\ast}(x,y)} = \frac{x^{3} y (x y - 2)}{(x^{2}y - 2)(x^{3}y^{2} -4x^{2}y - 4 xy + 8)}.
		\end{displaymath}
	\end{proof}
	
	\begin{corollary}
	    The generating function for the lengths of paths of terminating GSAWs on $L^{\ast}_{\infty}$ is
	    \begin{displaymath}
	        L^{\ast}_{\infty}(x,1) = \frac{x^{3} (x - 2)}{(x^{2} - 2)(x^{3} -4x^{2} - 4 x + 8)}.
	    \end{displaymath}
	    The generating function for the widths of paths of terminating GSAWs on $L^{\ast}_{\infty}$ is
	    \begin{displaymath}
	        L^{\ast}_{\infty}(1,y) = \frac{y (y - 2)}{(y - 2)(y^{2} -8y + 8)}
	    \end{displaymath}
	\end{corollary}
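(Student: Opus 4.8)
The plan is to obtain both univariate generating functions as marginals of the bivariate generating function $L^{\ast}_{\infty}(x,y)$ established in the preceding theorem, by specializing one variable to $1$. By definition $L^{\ast}_{\infty}(x,y) = \sum_{\ell,w} p_{\ell,w} x^{\ell} y^{w}$, where $p_{\ell,w}$ is the probability that a terminating GSAW on $L^{\ast}_{\infty}$ has length $\ell$ and width $w$. The coefficient of $x^{\ell}$ in the length generating function should be the total probability that such a walk has length $\ell$, namely $\sum_{w} p_{\ell,w}$, and the coefficient of $y^{w}$ in the width generating function should be $\sum_{\ell} p_{\ell,w}$.

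First I would argue that setting $y = 1$ performs exactly the sum over $w$: formally, $L^{\ast}_{\infty}(x,1) = \sum_{\ell} \left( \sum_{w} p_{\ell,w} \right) x^{\ell}$, which is the desired length generating function, and symmetrically $L^{\ast}_{\infty}(1,y) = \sum_{w} \left( \sum_{\ell} p_{\ell,w} \right) y^{w}$ is the width generating function. The only thing to check is that this specialization is legitimate at the level of the closed-form rational expression, i.e.\ that the rational function and the power series agree on a neighborhood of the relevant point so that substitution commutes with resummation. Since the $p_{\ell,w}$ are nonnegative probabilities with total mass at most $1$, the series converges at $y=1$ for $x$ in a neighborhood of $0$, and likewise at $x=1$ for $y$ near $0$, so the substitution is valid.

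Then I would simply substitute into the closed form from the theorem. Putting $y=1$ yields numerator $x^{3}(x-2)$ and denominator $(x^{2}-2)(x^{3}-4x^{2}-4x+8)$, the first claimed expression. Putting $x=1$ yields numerator $y(y-2)$ and denominator $(y-2)(y^{2}-8y+8)$, the second claimed expression; here the factor $y-2$ occurs in both numerator and denominator and could be cancelled, but I would retain the unreduced form to match the statement and to keep the origin of each factor transparent.

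The computation is routine substitution, so I do not expect a genuine obstacle. The one point requiring care is the justification of the specialization step: one must confirm that evaluating the rational generating function at $y=1$ (respectively $x=1$) genuinely yields the marginal generating function rather than a spurious value, which reduces to the convergence remark above. A secondary bookkeeping subtlety is the uncancelled common factor in the width generating function, which I would flag explicitly rather than silently simplify.
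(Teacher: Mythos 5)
Your proof is correct and is exactly the argument the paper intends: the corollary follows by substituting $y=1$ and $x=1$ into the bivariate generating function $L^{\ast}_{\infty}(x,y)$ from the preceding theorem (the paper gives no explicit proof, treating the substitution as immediate). Your additional remarks on the legitimacy of the specialization and the uncancelled factor $y-2$ are sound but not needed beyond what the paper implicitly assumes.
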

	
	\begin{corollary}
		The probability that a GSAW on $L^{\ast}_{\infty}$ terminates is 1.
	\end{corollary}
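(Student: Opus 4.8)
The plan is to recognize that the total probability that a GSAW on $L^{\ast}_{\infty}$ terminates is the sum $\sum_{\ell,w} p_{\ell,w}$ taken over all lengths $\ell$ and widths $w$, and that this sum is exactly the value of the generating function at $x = y = 1$. Since each $p_{\ell,w}$ is a probability, the double series $\sum_{\ell,w} p_{\ell,w} x^{\ell} y^{w}$ has nonnegative coefficients, so by Abel's theorem (equivalently, monotone convergence) its sum equals $\lim_{x,y \to 1^{-}} L^{\ast}_{\infty}(x,y)$ whenever that limit is finite. The events ``terminate at length $\ell$ and width $w$'' are pairwise disjoint, so this sum is precisely the probability of eventual termination.

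Next I would check that the rational expression is regular at $(1,1)$, so that the limit is just the value there. At $x = y = 1$ the two denominator factors are $x^{2}y - 2 = -1$ and $x^{3}y^{2} - 4x^{2}y - 4xy + 8 = 1 - 4 - 4 + 8 = 1$, so the denominator equals $-1 \neq 0$. Hence $L^{\ast}_{\infty}$ is continuous at $(1,1)$ and the limit reduces to a direct substitution.

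Finally I would evaluate. The numerator $x^{3}y(xy-2)$ equals $-1$ at $(1,1)$, and the denominator equals $-1$, giving $L^{\ast}_{\infty}(1,1) = 1$. Equivalently, one may first pass to the single-variable length generating function $L^{\ast}_{\infty}(x,1)$ from the preceding corollary and set $x = 1$, obtaining the same value. Either route yields that termination occurs with probability $1$.

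The computation itself is routine; the only step requiring care is the justification for ``plugging in $1$'', namely the appeal to Abel's theorem for a power series with nonnegative coefficients, together with the verification that $(1,1)$ is not a pole of the rational function. This is the main (and only mild) obstacle: without confirming finiteness of the function at $(1,1)$ one could not conclude that the limit equals the value there, and without nonnegativity of the coefficients one could not identify the boundary value with the total probability mass. Once both points are established, the conclusion is immediate.
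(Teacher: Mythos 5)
Your proposal is correct and follows essentially the same route as the paper: identify the termination probability with $\sum_{\ell,w} p_{\ell,w} = L^{\ast}_{\infty}(1,1)$ and evaluate the rational expression to get $1$. The paper simply states this evaluation, whereas you additionally justify the substitution via nonnegativity of coefficients, Abel's theorem, and the check that $(1,1)$ is not a pole—a welcome tightening, but not a different argument.
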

	
	\begin{proof}
		Let $p$ be the probability that a GSAW on $L^{\ast}_{\infty}$ terminates and $q$ be the probability that it doesn't.
		If the GSAW terminates, then it has a finite number of steps.
		Thus
		\begin{displaymath}
			p = \sumover{\ell=0}{\infty}{\sumover{w=0}{\infty}{p_{\ell,w}}} = L^{\ast}_{\infty}(1,1) = 1.
		\end{displaymath}
		Thus $q=0$ and a GSAW will terminate with probability 1.
	\end{proof}
	
	\begin{corollary}
		The expected number of steps taken before a GSAW on $L^{\ast}_{\infty}(x,y)$ terminates is 13 with variance 98 (standard deviation $\approx 3.6$).
		The expected width when a GSAW on $L^{\ast}_{\infty}(x,y)$ terminates is 7 with variance 40 (standard deviation $\approx 6.3$).
	\end{corollary}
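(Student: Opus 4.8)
The plan is to extract the mean and variance of each marginal distribution directly from the one-variable specializations of $L^{\ast}_{\infty}(x,y)$ recorded in the previous corollary, using the standard dictionary between a probability generating function and its factorial moments. Recall that if $f(x) = \sum_{n} p_n x^n$ is an ordinary generating function whose coefficients form a probability distribution, then $f(1) = 1$, the mean is $f'(1)$, the second factorial moment is $f''(1) = \mathbb{E}[X(X-1)]$, and hence the variance is $f''(1) + f'(1) - f'(1)^2$. Thus the entire computation reduces to evaluating each marginal together with its first two derivatives at the point $1$.

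For the length I would take $f(x) = L^{\ast}_{\infty}(x,1)$, and for the width I would first cancel the common factor $(y-2)$ appearing in the numerator and denominator of $L^{\ast}_{\infty}(1,y)$, reducing it to $g(y) = y/(y^2 - 8y + 8)$. As a sanity check I would confirm $f(1) = g(1) = 1$, which must hold because termination occurs with probability $1$. The width side is then short: differentiating $g$ once gives $g'(y) = (8 - y^2)/(y^2-8y+8)^2$, so $g'(1) = 7$; differentiating once more and evaluating at $1$ gives $g''(1) = 82$, whence the variance is $82 + 7 - 49 = 40$.

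The length side requires the same procedure applied to a rational function with a degree-five denominator, and this is where the bookkeeping is heaviest. Rather than apply the quotient rule twice, I would use logarithmic differentiation: writing $f = N/D$ with $N(x) = x^4 - 2x^3$ and $D(x) = x^5 - 4x^4 - 6x^3 + 16x^2 + 8x - 16$, the identity $f'/f = N'/N - D'/D$ recovers $f'(1) = 13$ from the values of $N, N', D, D'$ at $1$, and differentiating again yields
\[
\frac{f''}{f} = \left(\frac{f'}{f}\right)^2 + \left(\frac{N''}{N} - \left(\frac{N'}{N}\right)^2\right) - \left(\frac{D''}{D} - \left(\frac{D'}{D}\right)^2\right),
\]
so that $f''(1)$ follows from additionally evaluating $N''$ and $D''$ at $1$. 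This produces $f''(1) = 254$ and the variance $254 + 13 - 169 = 98$.

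The only genuine obstacle is arithmetic reliability in the degree-five second derivative for the length. Logarithmic differentiation localizes all of the work to the single point $x = 1$ and keeps the intermediate quantities small integers, which I expect to make the computation both short and self-checking: the termination identity $f(1) = 1$ and the known first moment $f'(1) = 13$ serve as built-in consistency checks before the variance is assembled.
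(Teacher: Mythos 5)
Your proposal is correct and follows essentially the same route as the paper: both extract the mean as the first derivative of the marginal generating functions $L^{\ast}_{\infty}(x,1)$ and $L^{\ast}_{\infty}(1,y)$ at $1$, and the variance via the factorial-moment identity $f''(1)+f'(1)-f'(1)^{2}$, and your values $f'(1)=13$, $f''(1)=254$, $g'(1)=7$, $g''(1)=82$ check out. The logarithmic-differentiation bookkeeping is merely a computational convenience (the paper simply states the derivative values without showing the work), not a different method.
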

	
	\begin{proof}
		Let $p_{\ell}$ be the probability that a GSAW on $L^{\ast}_{\infty}$ terminates with $\ell$ steps.
		Then
		\begin{displaymath}
			p_{\ell} = \sumover{w=0}{\infty}{p_{\ell,w}}.
		\end{displaymath}
		The expected value of the number of steps is
		\begin{displaymath}
			\mathbb{E}(\ell) = \sumover{\ell = 0}{\infty}{\ell p_{\ell}}.
		\end{displaymath}
		Thus 
		\begin{displaymath}
			\mathbb{E}(\ell) = \left.\left(\frac{\partial}{\partial x}L^{\ast}_{\infty}(x,1)\right)\right|_{x=1} = 13
		\end{displaymath}
		and the variance of the number of steps is
		\begin{align*}
			\mathbb{V}(\ell) & = \left.\left(\frac{\partial^{2}}{\partial x^{2}}L^{\ast}_{\infty}(x,1)\right)\right|_{x=1} + \left.\left(\frac{\partial}{\partial x}L^{\ast}_{\infty}(x,1)\right)\right|_{x=1} - \left(\left.\left(\frac{\partial}{\partial x}L^{\ast}_{\infty}(x,1)\right)\right|_{x=1}\right)^{2} \\
			& = 98.
		\end{align*}
		
		By similar analysis the expected value of the displacement is
		\begin{displaymath}
			\mathbb{E}(w) = \left.\left(\frac{\partial}{\partial y}L^{\ast}_{\infty}(1,y)\right)\right|_{y=1} = 7
		\end{displaymath}
		and the variance of the displacement is
		\begin{align*}
			\mathbb{V}(w) & = \left.\left(\frac{\partial^{2}}{\partial y^{2}}L^{\ast}_{\infty}(1,y)\right)\right|_{y=1} + \left.\left(\frac{\partial}{\partial y}L^{\ast}_{\infty}(1,y)\right)\right|_{y=1} - \left(\left.\left(\frac{\partial}{\partial y}L^{\ast}_{\infty}(1,y)\right)\right|_{y=1}\right)^{2} \\
			& = 40.
		\end{align*}
	\end{proof}
	
\subsubsection{The Double Sided Case}
	
	\begin{definition}
		We define a hook path of order $n$ on $L_{\infty}$ to be the path
		\begin{displaymath}
			H_{n} = \underbrace{E\cdots E}_{\text{$n$ steps}}N\underbrace{W\cdots W}_{\text{$n+1$ steps}}.
		\end{displaymath}
	\end{definition}

	The probability of GSAW having $H_{n}$ as its first segment is
	\begin{displaymath}
		\frac{1}{3\cdot 2^{n}}.
	\end{displaymath}
	The length of $H_{n}$ is $2n + 2$ and its width is $n+1$.
	
	\begin{lemma}
		Let $h_{\ell,w}$ be the probability that a GSAW on $L_{\infty}$ terminates as a hook path of length $\ell$ and width $w$.
		Then the bivariate ordinary generating function $H(x,y)$ of $h_{\ell,w}$ is
		\begin{displaymath}
			H(x,y) = \frac{2x^{2}y}{6-3x^{2}y}.
		\end{displaymath}
	\end{lemma}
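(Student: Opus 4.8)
The plan is to prove this exactly as the earlier lemma for $H^{\ast}(x,y)$: assemble the bivariate ordinary generating function directly from the explicit description of the hook paths, since the discussion immediately preceding the statement already supplies every combinatorial fact I need about a single hook $H_n$. Concretely, I would take as given that, for each $n \ge 0$, the terminating hook on $L_\infty$ is the path $H_n = E^{n} N W^{n+1}$, that it has length $\ell = 2n+2$ and width $w = n+1$, and that a GSAW realizes $H_n$ (and hence traps there) with probability $1/(3\cdot 2^{n})$. With these in hand the entire argument is bookkeeping on the coefficients $h_{\ell,w}$, and the only point worth flagging before starting is the identification ``$H_n$ is the first segment'' $=$ ``the walk terminates as $H_n$,'' which holds because the hook exhausts the available moves at its final vertex, exactly as in the one-sided case.

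First I would pin down the support of $h_{\ell,w}$. The relations $\ell = 2n+2$ and $w = n+1$ force $\ell = 2w$, so the nonzero coefficients lie entirely on the diagonal $\ell = 2w$; in particular every terminating hook on $L_\infty$ has \emph{even} length, in contrast to the odd-length hooks $H^{\ast}_n$ on $L^{\ast}_\infty$ where $\ell = 2w+1$. Writing $n = w-1$, this says $h_{\ell,w} = 1/(3\cdot 2^{\,w-1})$ when $\ell = 2w$ with $w \ge 1$, and $h_{\ell,w} = 0$ otherwise. Substituting into the defining double sum, only the diagonal terms survive, and after reindexing by $k = w-1$ the sum collapses to a single geometric series:
\[
	H(x,y) = \sum_{\ell=0}^{\infty}\sum_{w=0}^{\infty} h_{\ell,w}\, x^{\ell} y^{w} = \sum_{w=1}^{\infty} \frac{x^{2w} y^{w}}{3\cdot 2^{\,w-1}} = \frac{x^{2} y}{3}\sum_{k=0}^{\infty}\left(\frac{x^{2}y}{2}\right)^{k}.
\]
Summing $\sum_{k\ge 0}(x^{2}y/2)^{k} = \left(1 - x^{2}y/2\right)^{-1}$ and clearing denominators then yields
\[
	H(x,y) = \frac{x^{2}y}{3}\cdot\frac{1}{1 - \frac{x^{2}y}{2}} = \frac{2x^{2}y}{6 - 3x^{2}y},
\]
which is the claimed closed form.

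The generating-function manipulation above is entirely routine, so I do not expect it to be the obstacle; the substantive content is the combinatorial data I am importing, and the only genuine traps are in that bookkeeping. Concretely, I would be careful about two things. The first is the indexing boundary: I must verify that the series really begins at $n=0$ (contributing the $x^{2}y$ term) and not at $n=1$, since an off-by-one is the single most likely way the rational form could come out wrong. The second is that I treat the double sum as a formal power series in $x$ and $y$, so that the geometric summation is justified coefficientwise and no convergence hypothesis on the values of $x,y$ is needed. Everything else follows from the parallel with the $H^{\ast}(x,y)$ computation already carried out in the excerpt.
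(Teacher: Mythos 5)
Your generating-function bookkeeping is correct and coincides with the paper's own computation (support on the diagonal $\ell = 2w$, reindex, sum a geometric series), but the one conceptual claim you single out as the justification is false, and it is false in a way that matters for how this lemma gets used. On $L_{\infty}$ the hook path does \emph{not} exhaust the available moves at its final vertex. Run $H_{n} = E^{n}NW^{n+1}$ from the starting vertex in the bottom row: the walk ends at the vertex one column \emph{west} of the start in the top row, and both its western neighbor and the vertex directly below it are unoccupied, because unlike $L^{\ast}_{\infty}$ there is no wall closing off the west side. So no hook path ever traps a walk on $L_{\infty}$; taken literally, ``the probability that the GSAW terminates as a hook path'' is $0$ for every $(\ell,w)$. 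The phrase ``terminates as'' in the statement is a leftover from the one-sided case; what the paper actually computes (and what you actually summed) is the probability that $H_{n}$ is the \emph{first segment} of the walk. Your formula comes out right only because you plugged in the first-segment probabilities $1/(3\cdot 2^{n})$, in contradiction with your own ``and hence traps there.'' The distinction is not pedantic: the very next theorem in the paper is $L_{\infty}(x,y) = \left(T(x,y)+H(x,y)\right)L^{\ast}_{\infty}(x,y)$, where the hook term is \emph{also} multiplied by the continuation $L^{\ast}_{\infty}(x,y)$, precisely because after a hook the walk keeps going as a fresh one-sided GSAW. Carrying your reading forward would produce the one-sided-style decomposition $T\,L^{\ast}_{\infty}+H$ and a wrong $L_{\infty}(x,y)$.

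A smaller gap: you import the probability $1/(3\cdot 2^{n})$ as given, but deriving it is most of the substance of the paper's proof. The derivation requires a first-step case analysis at the three-neighbor starting vertex: with probability $1/3$ the first step is $N$, after which the second step (up to reflection) completes $H_{0}$; with probability $2/3$ the first step is $E$ or $W$ (folded together by symmetry), after which $n-1$ further $E$ steps, the $N$ step, and the first $W$ step each occur with probability $1/2$ and the remaining $n$ west steps are forced, giving $\frac{2}{3}\left(\frac{1}{2}\right)^{n-1}\cdot\frac{1}{2}\cdot\frac{1}{2}=\frac{1}{3\cdot 2^{n}}$. Since the paper asserts these values in the text immediately before the lemma, leaning on them is defensible; but note that the $n=0$ hook arises by a different mechanism than $n\geq 1$ (its first step is $N$, not $E$), so the $n=0$ boundary you flag as the main risk is exactly the case your imported facts paper over.
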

	
	\begin{proof}
	
		We first observe by symmetry that every starting position on $L_{\infty}$ is the same.
		Without loss of generality we assume we start at $(0,1)$.
		The first step is either a $N,W$, or $E$ step.
		However by symmetry $W$ and $E$ produce the same path, so without loss of generality we may either move $N$ or $E$.
		
		Thus a path has $1/3$ probability of taking a $N$ step at which point the next step must form $H_{n}$ by symmetry.
		Therefor the probability of a GSAW having $H_{0}$ as its first segment is $1/3$.
		
		Otherwise the path takes an $E$ step with probability $2/3$, at which point to create $H_{n}$ on $L_{\infty}$, the path must move $n-1$ $E$ steps, each with probability $1/2$.
		Then a $N$ step with probability $1/2$ and then a $W$ step with probability $1/2$, at which point the remaining $W$ steps must be taken since there are no other options.
		Thus the probability of a GSAW having $H_{n}$ as its first segment is
		\begin{displaymath}
		    \frac{2}{3}\left(\frac{1}{2}\right)^{n-2}\cdot\frac{1}{2}\cdot\frac{1}{2} = \frac{1}{3\cdot 2^{n}}.
		\end{displaymath}

		We observe that for any hook path, $\ell = 2w$.
		Thus we see that the length of every hook path must be even and have minimum length 2.
		Thus if $\ell$ is odd, $h_{\ell,w} = 0$.
		Thus
		\begin{align*}
			H(x,y) & = \sumover{\ell=0}{\infty}{\sumover{w=0}{\infty}{h_{\ell,w}x^{\ell}y^{w}}} 
			       = \sumover{w=0}{\infty}{h_{2w,w}x^{2w}y^{w}} 
			        = \sumover{k=1}{\infty}{h_{2k,w}x^{2k}y^{k}} \\
			      &  = \sumover{k=1}{\infty}{\frac{1}{3\cdot 2^{k-1}}x^{2k}y^{k}} 
			        = \sumover{k=0}{\infty}{\frac{1}{3\cdot 2^{k}}x^{2k+2}y^{k+1}} \\
			       & = \frac{x^2y}{3}\sumover{k=0}{\infty}{\left(\frac{x^{2}y}{2}\right)^{k}} 
			        = \frac{\frac{x^2y}{3}}{1-\frac{x^{2}y}{2}} = 
			        \boxed{ \frac{2x^{2}y}{6-3x^{2}y}}.
		\end{align*}
	\end{proof}

	
	\begin{definition}
		We define a twist path of order $n$ on $L_{\infty}$ to be the path
		\begin{displaymath}
			T_{n} = \underbrace{E\cdots E}_{\text{$n$ steps}}ENE.
		\end{displaymath}
	\end{definition}

	The probability of GSAW having $T_{n}$ as its first segment is
	\begin{displaymath}
		\frac{1}{3\cdot 2^{n+1}}.
	\end{displaymath}
	The length of $T_{n}$ is $3+n$ and its width is $2+n$.
	
	\begin{lemma}
		Let $t_{\ell,w}$ be the probability that a GSAW begins with a twist path of length $\ell$ and width $w$.
		Then the bivariate ordinary generating function $T(x,y)$ of $t_{\ell,w}$ is
		\begin{displaymath}
			T(x,y) = \frac{x^{3}y^{2}}{6-3xy}.
		\end{displaymath}
	\end{lemma}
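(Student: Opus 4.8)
The plan is to mirror the computation already carried out for $T^{\ast}(x,y)$, since the structure is identical: the twist paths form a family indexed by a single order parameter $n$, each contributing exactly one monomial to the generating function. The first step is to record the diagonal relation between length and width. From the data stated just above, $T_n$ has length $\ell = n+3$ and width $w = n+2$, so every twist path satisfies $w = \ell - 1$, with minimum length $3$ attained at $n=0$. Hence $t_{\ell,w} = 0$ unless $w = \ell - 1$ and $\ell \geq 3$, which collapses the defining double sum $\sum_{\ell}\sum_{w} t_{\ell,w} x^{\ell} y^{w}$ to the single diagonal sum $\sum_{\ell \geq 3} t_{\ell,\ell-1}\,x^{\ell} y^{\ell-1}$.

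Next I would express the coefficient $t_{\ell,\ell-1}$ directly in terms of $\ell$. A twist path of length $\ell$ is $T_n$ with $n = \ell - 3$, so by the probability recorded above its weight is $\frac{1}{3\cdot 2^{(\ell-3)+1}} = \frac{1}{3\cdot 2^{\ell-2}}$. Substituting this and re-indexing with $k = \ell - 3$ turns the generating function into $\sum_{k\ge 0} \frac{1}{3\cdot 2^{k+1}}\,x^{k+3} y^{k+2}$. Factoring out $\frac{x^{3}y^{2}}{6}$ then exposes the geometric series $\sum_{k\ge 0} \left(\frac{xy}{2}\right)^{k}$, and summing it yields $\frac{x^{3}y^{2}}{6}\cdot\frac{1}{1 - xy/2}$, which clears to the claimed $\frac{x^{3}y^{2}}{6-3xy}$.

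I do not expect any genuine obstacle: this is a routine geometric-series evaluation entirely parallel to the earlier lemmas. The one point meriting attention is index bookkeeping — correctly translating between the order $n$ and the length $\ell$ so that the exponents and the power of $2$ align. In fact the present case is slightly cleaner than the $T^{\ast}$ computation, because the uniform weight $\frac{1}{3\cdot 2^{n+1}}$ applies for \emph{all} $n \geq 0$; there is no anomalous lowest-order term to split off from the sum (in contrast to $T^{\ast}$, where the $n=0$ path carried probability $1/2$ rather than the generic $(1/2)^{n+2}$ and had to be extracted separately). The extra factor of $1/3$ here simply reflects the three-way choice available at the first step on $L_{\infty}$, and it is already absorbed into the stated probability, so no re-derivation of that weight is needed.
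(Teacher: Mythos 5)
Your proposal is correct and follows essentially the same route as the paper's proof: both collapse the double sum to the diagonal $w=\ell-1$ (the paper sums over $w$, you over $\ell$, which is immaterial), re-index to $k$, and sum the geometric series $\sum_{k\ge 0}\left(\frac{xy}{2}\right)^{k}$ to obtain $\frac{x^{3}y^{2}}{6-3xy}$. The only divergence is that the paper's proof spends its first half re-deriving the first-segment probability $\frac{1}{3\cdot 2^{n+1}}$ via the symmetry argument (start WLOG at $(0,1)$, first step $E$ or $W$ with probability $2/3$, WLOG $E$, then each subsequent step with probability $1/2$), whereas you cite that value from the text immediately preceding the lemma; since the paper states it there, this is a defensible shortcut, and your remark that, unlike the $T^{\ast}$ case, no anomalous $n=0$ term needs to be split off is also correct.
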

	
	\begin{proof}
		We first observe by symmetry that every starting position on $L_{\infty}$ is the same.
		Without loss of generality we assume we start at $(0,1)$.
		By symmetry the first step must be a $W$ or $E$ step.
		The probability of the first step being one of these two is $2/3$.
		By symmetry we may assume without loss of generality that the step was an $E$ step.
		
		After the first step, in order to create $T_{n}$ on $L_{\infty}$, the path must move $n$ $E$ steps each with probability $1/2$.
		Then a $N$ step with probability $1/2$ and then an $E$ step with probability $1/2$.
		At which point the remaining $W$ steps must be taken since there are no other options.
		Thus the probability of a GSAW having $T_{n}$ as its first segment is $(2/3)(1/2)^{n}(1/2)(1/2) = 1/(3\cdot 2^{n+1})$.
		
		We observe that for any hook path, $\ell = w+1$ and the minimum width is $2$.
		Thus
		\begin{align*}
			T(x,y) & = \sumover{\ell=0}{\infty}{\sumover{w=0}{\infty}{t_{\ell,w}x^{\ell}y^{w}}} 
			        = \sumover{w=0}{\infty}{t_{w+1,w}x^{w+1}y^{w}} 
			        = \sumover{w=0}{\infty}{\frac{1}{3\cdot 2^{w-1}}x^{w+1}y^{w}} \\
			       & = \sumover{w=2}{\infty}{\frac{1}{3\cdot 2^{w-1}}x^{w+1}y^{w}} 
			        = \sumover{k=0}{\infty}{\frac{1}{3\cdot 2^{k+1}}x^{k+3}y^{k+2}} 
			        = \frac{x^{3}y^{2}}{6}\sumover{k=0}{\infty}{\left(\frac{xy}{2}\right)^{k}} \\
			       & = \frac{\frac{x^{3}y^{2}}{6}}{1-\frac{xy}{2}}
			        =\boxed{ \frac{x^{3}y^{2}}{6-3xy}}.
		\end{align*}
		
		
	\end{proof}
	
	\begin{theorem}
		let $p_{\ell,w}$ be the probability that a GSAW on $L_{\infty}$ terminates having taken $\ell$ steps and a displacement of $w$.	
		Then the bivariate ordinary generating function $L_{\infty}(x,y)$ of $p_{\ell,w}$ is
		\begin{displaymath}
			L_{\infty}(x,y) = \frac{x^5 y^{2} (4 - x^3 y)}{3 (x^{2}y - 2)^2 (x^3 y^2 - 4 x^2 y - 4 x y + 8)}.
		\end{displaymath}
	\end{theorem}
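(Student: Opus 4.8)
The plan is to mirror the one-sided argument by decomposing an arbitrary terminating GSAW on $L_{\infty}$ into an initial segment that carries the walk from the origin up to the instant it first seals off one direction, followed by the remainder, and then to read off the resulting product of generating functions. First I would invoke the symmetry already exploited in computing $H$ and $T$: every start vertex of $L_{\infty}$ is equivalent, so fix the start at $(0,1)$, and use the reflection exchanging $E$ and $W$ to treat the first horizontal direction as eastward while folding the mirror image into the probabilities (exactly as was done when deriving $H$ and $T$).

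The structural heart is a confinement lemma: as soon as the walk occupies \emph{both} vertices of a single column (a full rung), no later step can cross that rung, so the walk is permanently trapped on one side of it. I would show that the first completed rung occurs precisely at the end of an initial hook or twist. Before its first $N$-step the walk runs straight, say to $(a,1)$; the $N$-step puts it at $(a,2)$, and the next choice is decisive: stepping $E$ produces the twist $T_{a-1}$ and seals the walk east of the turning column, while stepping $W$ forces a westward sweep producing the hook $H_{a}$ and seals the walk west of the origin column (the degenerate $a=0$ cases, where the first step is vertical, are the order-zero segments already absorbed into $H$ and $T$ by symmetry). In either case the walk emerges at a corner of a half-infinite ladder with exactly two free neighbours — the same local configuration as a fresh start on $L^{\ast}_{\infty}$ — so the remainder is distributed identically to a GSAW on $L^{\ast}_{\infty}$.

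The essential contrast with the one-sided case is that here neither the hook nor the twist terminates: unlike a one-sided hook, which ends against the physical wall, each two-sided segment merely \emph{builds} a wall and hands the walk off to a one-sided continuation. This yields the decomposition ``(hook or twist) followed by an $L^{\ast}_{\infty}$ walk,'' which is a length- and width-preserving bijection, since the initial segment and the continuation meet in a single shared column. Hence lengths and widths are additive across the seam and the bivariate generating functions multiply, giving the functional equation
\[
  L_{\infty}(x,y) = \bigl(H(x,y)+T(x,y)\bigr)\,L^{\ast}_{\infty}(x,y),
\]
which is a product rather than a recursion. I would finish by substituting $H(x,y)=\tfrac{2x^{2}y}{6-3x^{2}y}$, $T(x,y)=\tfrac{x^{3}y^{2}}{6-3xy}$, and the expression for $L^{\ast}_{\infty}(x,y)$ from the preceding theorem, then clearing denominators to reach the claimed rational function; note that this substitution already reproduces the stated denominator $3(x^{2}y-2)^{2}(x^{3}y^{2}-4x^{2}y-4xy+8)$ exactly.

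The step I expect to be the main obstacle is the confinement lemma together with the claim that the confined remainder is \emph{exactly} an $L^{\ast}_{\infty}$ walk: one must verify that a completed rung is genuinely impassable, that the walk always reaches the far corner with precisely the two neighbours available to a fresh $L^{\ast}_{\infty}$ start (so the conditional step probabilities agree), and that the symmetry folding and the order-zero segments are counted once and only once. The other delicate point is the width bookkeeping at the junction: one must check that the shared column is not double-counted so that the $y$-exponents truly add, since a dropped factor of $y$ here is invisible at $y=1$ (the length distribution is unaffected) but distorts the width distribution. I would validate this against the low-order coefficients — the leading term is $\tfrac1{24}x^{5}y^{2}$ — and in particular confirm that every width coefficient stays non-negative, which is the sharpest test of the exponent bookkeeping.
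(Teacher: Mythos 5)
Your decomposition is exactly the paper's: every terminating walk on $L_{\infty}$ consists of an initial hook or twist (which, unlike the one-sided case, does not terminate but merely seals off one side) followed by an independent GSAW on $L^{\ast}_{\infty}$, giving $L_{\infty}(x,y) = \bigl(H(x,y)+T(x,y)\bigr)L^{\ast}_{\infty}(x,y)$; your confinement lemma and width-additivity checks simply make explicit what the paper's terser proof leaves implicit. One correction to your final step: the substitution does \emph{not} reproduce the numerator as printed in the statement --- it yields $x^{5}y^{2}(4-x^{3}y^{2})$, not $x^{5}y^{2}(4-x^{3}y)$; the printed statement is a typo (the paper's own proof concludes with the $y^{2}$ version, and the printed version would force the impossible negative coefficient $-\tfrac{1}{96}$ on $x^{8}y^{3}$, while both agree at $y=1$ so the length results are unaffected), so the non-negativity check you proposed is exactly the test that detects this discrepancy.
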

	
	\begin{proof}
		We first consider an arbitrary GSAW on $L_{\infty}$.
		
		With probability 1, the GSAW will take a north step, and thus form either a hook or twist path on $L_{\infty}$.
		However, after that, the path continues as if it were on $L^{\ast}_{\infty}$ since it can no longer go back.
		
		Thus every GSAW on $L_{\infty}$ is either a hook path or a twist path followed by another GSAW on $L^{\ast}_{\infty}$.
		
		Thus
		\begin{displaymath}
			L_{\infty}(x,y) = (T(x,y)+ H(x,y))L^{\ast}_{\infty}(x,y).
		\end{displaymath}
		Since the other generating functions have already been computed we get that
		\begin{displaymath}
			L_{\infty}(x,y) = \frac{x^5 y^{2} (4 - x^3 y^{2})}{3 (x^{2}y - 2)^2 (x^3 y^2 - 4 x^2 y - 4 x y + 8)}.
		\end{displaymath}
	\end{proof}
	\begin{corollary}\label{theorem:trapping-length-square-lattice}
	    The generating function for the lengths of paths of terminating GSAWs (the trapping length) on $L_{\infty}$ is
	    \begin{displaymath}
	        L_{\infty}(x,1) = \frac{x^5 (4 - x^3)}{3 (x^{2} - 2)^2 (x^3 - 4 x^2 - 4 x + 8)}.
	    \end{displaymath}
	    The generating function for the widths of paths of terminating GSAWs on $L_{\infty}$ is
	    \begin{displaymath}
	        L_{\infty}(1,y) = \frac{y^{2} (4 - y)}{3 (y - 2)^2 (y^2 - 8y + 8)}
	    \end{displaymath}
	\end{corollary}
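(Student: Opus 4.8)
The plan is to obtain both univariate generating functions as \emph{marginals} of the bivariate generating function $L_{\infty}(x,y)$ supplied by the preceding theorem, and then to carry out the two specializations $y=1$ and $x=1$ directly. Recall that by definition $L_{\infty}(x,y) = \sum_{\ell=0}^{\infty}\sum_{w=0}^{\infty} p_{\ell,w}\, x^{\ell} y^{w}$, where $p_{\ell,w}$ is the probability that a terminating GSAW on $L_{\infty}$ has length $\ell$ and width $w$. The first observation I would record is that setting $y=1$ collapses the width index: $L_{\infty}(x,1) = \sum_{\ell} \big(\sum_{w} p_{\ell,w}\big) x^{\ell} = \sum_{\ell} p_{\ell}\, x^{\ell}$, which is exactly the generating function for the length (trapping-length) distribution. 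Symmetrically, setting $x=1$ collapses the length index and yields $L_{\infty}(1,y) = \sum_{w} p_{w}\, y^{w}$, the generating function for the width distribution. Thus the corollary is precisely the pair of specializations $L_{\infty}(x,1)$ and $L_{\infty}(1,y)$, and nothing needs to be re-derived combinatorially.

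The second step is the substitution itself. For the length generating function I would set $y=1$ in the closed form, so that the denominator factor $x^{3}y^{2}-4x^{2}y-4xy+8$ becomes $x^{3}-4x^{2}-4x+8$, the factor $(x^{2}y-2)^{2}$ becomes $(x^{2}-2)^{2}$, and the numerator $x^{5}y^{2}(4-x^{3}y)$ reduces to $x^{5}(4-x^{3})$, producing the claimed expression. For the width generating function I would set $x=1$, so that $x^{3}y^{2}-4x^{2}y-4xy+8$ becomes $y^{2}-8y+8$, the factor $(x^{2}y-2)^{2}$ becomes $(y-2)^{2}$, and the numerator specializes to $y^{2}(4-y)$, again reading off the stated rational function. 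Each of these is a one-line specialization of the rational function already in hand, with no common factors introduced or cancelled.

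Since the two specializations are purely mechanical, there is no substantive obstacle here; the only point worth a sentence of justification is that the evaluations at $y=1$ and $x=1$ are legitimate rather than merely formal manipulations of a rational expression. This follows because the construction produces an honest probability generating function: the coefficients $p_{\ell,w}$ are nonnegative and sum to $1$ (the analogue for $L_{\infty}$ of the termination-with-probability-one statement already established for $L^{\ast}_{\infty}$), so the double series converges absolutely at $(x,y)=(1,1)$ and hence throughout the closed unit polydisc, making each one-variable evaluation well defined and equal to the corresponding limit of the rational form. With that remark in place, both identities follow immediately from the theorem.
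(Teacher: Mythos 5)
Your approach is exactly the paper's: the corollary is stated without proof precisely because it is the immediate specialization of the bivariate generating function at $y=1$ and $x=1$, and your remark that these evaluations are legitimate (absolute convergence of a probability generating function on the closed unit polydisc) is a sound addition that the paper leaves implicit.

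There is, however, one caveat you could not have caught blind, and it affects the width half of your argument. The theorem you are quoting is internally inconsistent: its statement has numerator $x^{5}y^{2}(4-x^{3}y)$, but its own proof ends with $x^{5}y^{2}(4-x^{3}y^{2})$, and direct multiplication of $\left(T(x,y)+H(x,y)\right)L^{\ast}_{\infty}(x,y)$, using
\begin{displaymath}
T(x,y)+H(x,y)=\frac{x^{2}y\,(4-x^{3}y^{2})}{3(2-xy)(2-x^{2}y)},
\end{displaymath}
confirms that $(4-x^{3}y^{2})$ is the correct factor. The two versions agree at $y=1$, so your derivation of the trapping-length formula $L_{\infty}(x,1)$ stands as is. But at $x=1$ the correct width generating function is
\begin{displaymath}
L_{\infty}(1,y)=\frac{y^{2}(4-y^{2})}{3(y-2)^{2}(y^{2}-8y+8)}=\frac{y^{2}(y+2)}{3(2-y)(y^{2}-8y+8)},
\end{displaymath}
not $y^{2}(4-y)/\bigl(3(y-2)^{2}(y^{2}-8y+8)\bigr)$ as printed. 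A consistency check: the corrected form gives mean width $28/3$, which is exactly the value the paper asserts two corollaries later, whereas the printed form gives $29/3$. This also undercuts your closing remark that no common factors cancel: in the corrected width specialization the factor $(2-y)$ cancels against $(y-2)^{2}$. In short, your substitution argument is the right (and the paper's own) proof strategy, and it proves the corollary as printed; but the width statement itself inherits a typo from the theorem, so that half of the corollary should be corrected rather than certified.
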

	
	\begin{corollary}
		The probability that a GSAW on $L_{\infty}$ terminates is 1.
	\end{corollary}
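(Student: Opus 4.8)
The plan is to mirror the corresponding corollary already proved for $L^{\ast}_{\infty}$: the probability that a GSAW on $L_{\infty}$ terminates is the total mass $\sum_{\ell=0}^{\infty}\sum_{w=0}^{\infty} p_{\ell,w}$, which is precisely $L_{\infty}(1,1)$. The entire task therefore reduces to showing that the generating function from the preceding theorem evaluates to $1$ at $x=y=1$.

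The quickest route is direct substitution. The denominator $3(x^{2}y-2)^{2}(x^{3}y^{2}-4x^{2}y-4xy+8)$ equals $3\cdot(-1)^{2}\cdot 1=3$ at $(1,1)$, which is nonzero, so the rational expression is analytic in a neighborhood of $(1,1)$ and no pole intervenes; the numerator likewise evaluates to $3$, giving $L_{\infty}(1,1)=3/3=1$. I think the more illuminating route, however, is to avoid expanding the unwieldy rational function and instead use the factored identity $L_{\infty}(x,y)=(T(x,y)+H(x,y))L^{\ast}_{\infty}(x,y)$ from the proof of the theorem. Evaluating the simple factors gives $H(1,1)=\tfrac{2}{6-3}=\tfrac{2}{3}$ and $T(1,1)=\tfrac{1}{6-3}=\tfrac{1}{3}$, so $T(1,1)+H(1,1)=1$; combined with the already-established $L^{\ast}_{\infty}(1,1)=1$, this yields $L_{\infty}(1,1)=1\cdot 1=1$. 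This decomposition also makes the probabilistic meaning transparent: with probability $1$ the walk takes its first north step and completes a hook or a twist (these exhaust the possibilities and carry total mass $1$), after which the remaining walk lives on $L^{\ast}_{\infty}$ and terminates almost surely.

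The only genuine subtlety, rather than a real obstacle, is justifying the interchange $\sum_{\ell,w} p_{\ell,w}=L_{\infty}(1,1)$, i.e.\ that the power series may be summed by setting both variables to $1$. Since each $p_{\ell,w}$ is nonnegative and the partial sums are probabilities of pairwise-disjoint termination events (hence bounded by $1$), the double series converges by monotone convergence, and $\sum_{\ell,w}p_{\ell,w}=\lim_{r\to 1^{-}}L_{\infty}(r,r)$. Because the rational function is continuous at $(1,1)$ with no pole there, this radial limit equals $L_{\infty}(1,1)$, which is the Abel-type argument already used implicitly for $L^{\ast}_{\infty}$. Everything after that is the one-line arithmetic above, so I expect no difficulty beyond recording this justification once.
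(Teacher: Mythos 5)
Your proposal follows essentially the same route as the paper: the paper's proof also identifies the termination probability with $\sum_{\ell}\sum_{w} p_{\ell,w} = L_{\infty}(1,1)$ and concludes it equals $1$. Your extra details---carrying out the substitution (both directly and via the factorization $(T(x,y)+H(x,y))L^{\ast}_{\infty}(x,y)$, using $H(1,1)=\tfrac{2}{3}$, $T(1,1)=\tfrac{1}{3}$, $L^{\ast}_{\infty}(1,1)=1$) and the monotone-convergence/Abel justification for setting $x=y=1$---are correct and simply make explicit what the paper leaves implicit.
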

	
	\begin{proof}
		Let $p$ be the probability that a GSAW on $L_{\infty}$ terminates and $q$ be the probability that it doesn't.
		If the GSAW terminates, then it has a finite number of steps.
		Thus
		\begin{displaymath}
			p = \sumover{\ell=0}{\infty}{\sumover{d=0}{\infty}{p_{\ell,d}}} = L_{\infty}(1,1) = 1.
		\end{displaymath}
		Thus $q=0$ and a GSAW will terminate with probability 1.
	\end{proof}
	
	\begin{corollary}
		The expected number of steps taken before a GSAW on $L_{\infty}$ terminates is 17 with variance 104 (standard deviation $\approx 10.2$).
		The expected width when a GSAW on $L_{\infty}$ terminates is $28/3 \approx 9.6$ with variance $740/9 \approx 82.2$ (standard deviation $\approx 9.1$).
	\end{corollary}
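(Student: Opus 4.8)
The plan is to extract the four requested quantities from the two marginal generating functions $L_{\infty}(x,1)$ and $L_{\infty}(1,y)$ recorded in Corollary~\ref{theorem:trapping-length-square-lattice}, in exactly the manner used for the corresponding $L^{\ast}_{\infty}$ corollary above. Since the preceding corollary shows $L_{\infty}(1,1)=1$, both marginals are honest probability generating functions, so the mean of the length is $\left.\partial_{x}L_{\infty}(x,1)\right|_{x=1}$, and its variance follows from the same second-derivative (factorial-moment) identity applied there; the width mean and variance come from applying the two identities to $L_{\infty}(1,y)$, and the standard deviations are the square roots of the variances.

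Concretely, I would write each marginal as a quotient $N/D$ of polynomials, tabulate $N,N',N''$ and $D,D',D''$ at the evaluation point, and assemble the first and second derivatives by the quotient rule rather than expanding the full rational function. This should reproduce $\mathbb{E}(\ell)=17$ and $\mathbb{V}(\ell)=104$ for the length and, from $L_{\infty}(1,y)$, the analogous width mean and variance.

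The computation is conceptually routine, so the only genuine obstacle is arithmetic bookkeeping: the second derivatives of these rational functions produce large intermediate quantities, and a single sign error in the quotient rule feeds directly into a reported moment. To organize and, crucially, to verify the arithmetic, I would use the factorization $L_{\infty}(x,y)=\bigl(T(x,y)+H(x,y)\bigr)L^{\ast}_{\infty}(x,y)$ from the theorem. Because a product of probability generating functions is the generating function of a sum of independent contributions, the length (resp.\ width) of a terminating $L_{\infty}$ walk is distributed as the initial segment's length (resp.\ width) \emph{plus} that of an independent $L^{\ast}_{\infty}$ walk; hence both means and both variances are additive across the two factors. I would therefore compute the initial-segment moments from the low-degree factors $\bigl(T+H\bigr)(x,1)$ and $\bigl(T+H\bigr)(1,y)$ and add the already-established $L^{\ast}_{\infty}$ moments (length $13$, variance $98$; width $7$, variance $40$) as an independent check. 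I expect the width variance to be the entry most in need of this cross-check, and would report it only once direct differentiation of $L_{\infty}(1,y)$ and the additive computation agree.
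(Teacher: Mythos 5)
Your method is exactly the paper's own: the paper gives no proof for this corollary, but it proves the analogous $L^{\ast}_{\infty}$ corollary precisely by differentiating the marginal generating functions and using $\mathbb{V} = f''(1) + f'(1) - \left(f'(1)\right)^{2}$, which is what you propose. Your independence/additivity cross-check via $L_{\infty}(x,y) = \bigl(T(x,y)+H(x,y)\bigr)L^{\ast}_{\infty}(x,y)$ is a genuinely useful addition, and for the length it works perfectly: the initial-segment factor $(T+H)(x,1)$ has length mean $4$ and variance $6$, and $4+13=17$, $6+98=104$, agreeing with direct differentiation of $L_{\infty}(x,1)$.

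The step that fails is the width, and it fails in a way your own agreement criterion would catch: the two computations you promise to reconcile do not agree with each other, and neither yields the claimed $740/9$. The printed marginal $L_{\infty}(1,y)=y^{2}(4-y)/\bigl[3(y-2)^{2}(y^{2}-8y+8)\bigr]$ inherits a typo from the theorem statement: multiplying out $(T+H)L^{\ast}_{\infty}$ (as the theorem's own proof does) gives numerator $x^{5}y^{2}(4-x^{3}y^{2})$, not $x^{5}y^{2}(4-x^{3}y)$, so the correct width marginal is $y^{2}(4-y^{2})/\bigl[3(y-2)^{2}(y^{2}-8y+8)\bigr] = y^{2}(2+y)/\bigl[3(2-y)(y^{2}-8y+8)\bigr]$. (At $y=1$ the two numerators coincide, which is why the length results are unaffected.) Differentiating the printed marginal gives width mean $29/3$ and variance $392/9$; differentiating the corrected marginal gives mean $28/3$ and variance $380/9$. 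The additive computation confirms the latter: $(T+H)(1,y) = (2y+y^{2})/(6-3y)$ has width mean $7/3$ and variance $20/9$, so the totals are $7/3+7=28/3$ and $20/9+40=380/9\approx 42.2$. Thus the stated mean $28/3$ is right (though the paper's decimal $\approx 9.6$ is not; $28/3\approx 9.33$), while the stated variance $740/9\approx 82.2$ cannot be reproduced --- it exceeds the correct value by exactly $40$, suggesting the $L^{\ast}_{\infty}$ width variance was added twice. Your method is sound, but as written the proposal sets out to verify $740/9$ and cannot; carried out honestly it instead establishes $380/9$ and exposes an error in the statement itself.
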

	
\subsection{Unbiased Triangular Ladders}

	\subsubsection{Definitions}
	
	\begin{definition}
		We define the graph $\Delta_{\infty}$ to be the graph with vertex set $V = \{(a,b) \mid a \in \Z, \quad b\in \{0,1\}, \text{ and } a+b \text{ is even}\}$ and edge set $E = \{((x_{1},y_{1}),(x_{2},y_{2})) \mid (\abs{x_{1}-x_{2}} = 1 \text{ and } \abs{y_{1}-y_{2}} = 1) \text{ or } (y_{1} = y_{2} \text{ and } \abs{x_{1}-x_{2}} = 2)\}$
	\end{definition}
		
	\begin{definition}
		We define the graph $\Delta^{\ast}_{\infty}$ to be the induced sub-graph of $\Delta_{\infty}$ whose vertex set is given by $V = \{(a,b) \mid a \in \N, \quad b\in \{0,1\}, \text{ and } a+b \text{ is even}\}$
	\end{definition}
	
	\begin{example}
		The defined graphs are as follows:
		\begin{center}
			\begin{tikzpicture}[scale=1]
				\draw[help lines] (0,0) -- (4.25,0);
				\draw[help lines] (0.5,1) -- (4.75,1);
				\draw[help lines] (0,0) -- (0.5,1) -- (1,0) -- (1.5,1) -- (2,0) -- (2.5,1) -- (3,0) -- (3.5,1) -- (4,0) -- (4.5,1) ;
				\draw (4.75,0.5) node {$\scriptstyle\cdots\,$};
				\filldraw (0,0) circle (.1);
				\filldraw (1,0) circle (.1);
				\filldraw (2,0) circle (.1);
				\filldraw (3,0) circle (.1);
				\filldraw (4,0) circle (.1);
				\filldraw (0.5,1) circle (.1);
				\filldraw (1.5,1) circle (.1);
				\filldraw (2.5,1) circle (.1);
				\filldraw (3.5,1) circle (.1);
				\filldraw (4.5,1) circle (.1);
				\node at (2.5,-.5) {$\Delta^{\ast}_{\infty}$};
			\end{tikzpicture}
		\end{center}
		\begin{center}
			\begin{tikzpicture}[scale=1]
				\draw[help lines] (-.25,0) -- (4.25,0);
				\draw[help lines] (0.25,1) -- (4.75,1);
				\draw[help lines] (0,0) -- (0.5,1) -- (1,0) -- (1.5,1) -- (2,0) -- (2.5,1) -- (3,0) -- (3.5,1) -- (4,0) -- (4.5,1) ;
				\draw (4.75,0.5) node {$\scriptstyle\cdots\,$};
				\draw (-.25,0.5) node {$\scriptstyle\cdots\,$};
				\filldraw (0,0) circle (.1);
				\filldraw (1,0) circle (.1);
				\filldraw (2,0) circle (.1);
				\filldraw (3,0) circle (.1);
				\filldraw (4,0) circle (.1);
				\filldraw (0.5,1) circle (.1);
				\filldraw (1.5,1) circle (.1);
				\filldraw (2.5,1) circle (.1);
				\filldraw (3.5,1) circle (.1);
				\filldraw (4.5,1) circle (.1);
				\node at (2.5,-.5) {$\Delta_{\infty}$};
			\end{tikzpicture}
		\end{center}
	\end{example}
	
	\begin{figure}
		\begin{center}
			\begin{tikzpicture}[scale=1]
				\draw[help lines] (0,0) -- (4.25,0);
				\draw[help lines] (0.5,1) -- (4.75,1);
				\draw[help lines] (0,0) -- (0.5,1) -- (1,0) -- (1.5,1) -- (2,0) -- (2.5,1) -- (3,0) -- (3.5,1) -- (4,0) -- (4.5,1) ;
				\draw (4.75,0.5) node {$\scriptstyle\cdots\,$};
				\filldraw (0,0) circle (.1);
				\filldraw (1,0) circle (.1);
				\filldraw (2,0) circle (.1);
				\filldraw (3,0) circle (.1);
				\filldraw (4,0) circle (.1);
				\filldraw (0.5,1) circle (.1);
				\filldraw (1.5,1) circle (.1);
				\filldraw (2.5,1) circle (.1);
				\filldraw (3.5,1) circle (.1);
				\filldraw (4.5,1) circle (.1);
				\draw (0.5,1) circle (.2);
				\draw (0,0) circle (.2);
				\node [label={[label distance=.3]150:\small{wide corner}}] at (0.5,1) {};
				\node [label={[label distance=.3]210:\small{narrow corner}}] at (0,0) {};
				\node at (2.5,-.5) {$EN_{d}ES_{u}N_{u}ES_{d}$};
				\draw[ultra thick,postaction={on each segment={mid arrow=black}}] (0,0) -- (1,0) -- (0.5,1) -- (1.5,1) -- (2,0) -- (2.5,1) -- (3.5,1) -- (3,0);
			\end{tikzpicture}
		\end{center}
		\caption{The $N_{u}$ and $S_{u}$ steps will always travel on the diagonal that moves east, while the $N_{d}$ and $S_{d}$ steps will always travel on the diagonal that moves west.}
		\label{figure:one-sided-triangle}
	\end{figure}
	
	Since GSAWs on these graphs are no longer on a rectangular grid, we have different notation for the representation of steps.
	
	Any step in a path of the graphs $\Delta^{\ast}_{\infty}$ and $\Delta_{\infty}$ can be identified as a step in the positive $x$ direction which we will denote with the symbol $E$, a step in the negative $x$ direction which we will denote with the symbol $W$, a step in the negative $x$ and $y$ direction which we will denote with the symbol $S_{d}$, or step in the positive $y$ direction and negative $x$ which we will denote with the symbol $N_{d}$, a step in the positive $x$ and $y$ direction which we will denote with the symbol $N_{u}$, or step in the negative $y$ direction and positive $x$ which we will denote with the symbol $S_{u}$.
	
	All of these steps can be seen in the path given by Figure~\ref{figure:one-sided-triangle}.
	
	We additionally define the vertex $(0,0)$ in $\Delta^{\ast}_{\infty}$ to be the ``narrow corner'' and the vertex $(1,1)$ in $\Delta^{\ast}_{\infty}$ to be the ``wide corner''. 
	
	\subsubsection{Precursor Functions}
	
	\begin{definition}
		We define an upward twist path of order $n$ on $\Delta^{\ast}_{\infty}$ starting at the wide corner to be the path
		\begin{displaymath}
			T^{u}_{n} = \underbrace{E\cdots E}_{\text{$n$ steps}}S_{u}E.
		\end{displaymath}
	\end{definition}

	The probability of GSAW on $\Delta^{\ast}_{\infty}$ starting at the wide corner having $T^{u}_{n}$ as its first segment is
	\begin{displaymath}
		\frac{1}{3^{n+2}}.
	\end{displaymath}
	The length of $T^{u}_{n}$ is $n+2$.
	
	\begin{lemma}
		Let $t^{u}_{\ell}$ be the probability that a GSAW on $\Delta^{\ast}_{\infty}$ starting at the wide corner has $T^{u}_{n}$ as its first segment where $n+2 = \ell$.
		Then the ordinary generating function $T^{u}(x)$ of $t^{u}_{\ell}$ is
		\begin{displaymath}
			T^{u}(x) = \frac{x^{2}}{9-3x}.
		\end{displaymath}
	\end{lemma}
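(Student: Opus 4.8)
The plan is to mirror the computations of the earlier twist and hook generating functions: reduce $T^u(x)$ to a single geometric series in $x$ and sum it. First I would record the two facts established just before the statement, namely that each upward twist path $T^u_n$ has length $\ell = n+2$ and occurs as the first segment of a GSAW from the wide corner with probability $1/3^{n+2}$. Because the length determines the order uniquely via $n = \ell - 2$, there is exactly one contributing value of $n$ for each admissible $\ell$, and the smallest possible length is $\ell = 2$, coming from $n = 0$. Hence $t^u_\ell = 1/3^\ell$ for $\ell \geq 2$ and $t^u_\ell = 0$ for $\ell < 2$.

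Next I would substitute this into the definition of the ordinary generating function, $T^u(x) = \sum_{\ell=0}^\infty t^u_\ell x^\ell$. The vanishing of $t^u_\ell$ below $\ell = 2$ lets me start the sum at $\ell = 2$, after which I re-index by $k = \ell - 2$ to obtain $\sum_{k=0}^\infty (1/3)^{k+2} x^{k+2}$. Factoring out the $k$-independent prefactor $(x/3)^2$ leaves the standard geometric series $\sum_{k=0}^\infty (x/3)^k$. Finally I would evaluate that series as $1/(1 - x/3)$, valid as a formal power series (or for $|x| < 3$), multiply by $x^2/9$, and clear denominators to land on $x^2/(9 - 3x)$, matching the claimed expression.

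The computation has no genuine obstacle; the only point requiring care is the bookkeeping at the lower end of the index—confirming that $n = 0$ is permitted, so the series genuinely starts at $\ell = 2$ rather than at $\ell = 3$, and that no smaller length can occur. This is exactly analogous to the minimum-length observations used in the proofs of the hook and twist lemmas, so I would state it explicitly and then proceed directly to the summation. I would also note that, unlike the preceding bivariate cases, there is no width variable here, so the result is a univariate generating function and the single geometric sum suffices.
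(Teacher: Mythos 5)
Your proposal is correct and follows essentially the same route as the paper: identify $t^u_\ell = 1/3^\ell$ for $\ell \ge 2$ (zero below), shift the index so the sum starts at $n=0$, and evaluate the geometric series $\frac{x^2}{9}\sum_{k\ge 0}(x/3)^k = \frac{x^2}{9-3x}$. The only cosmetic difference is that the paper re-derives the probability $1/3^{n+2}$ inside the proof (each of the $n$ east steps, the $S_u$ step, and the final $E$ step having probability $1/3$), while you cite the facts stated just before the lemma; the computation itself is identical.
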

	
	\begin{proof}
		
		In order to create $T^{u}_{n}$ on $\Delta^{\ast}_{\infty}$, the path must move $n$ $E$ steps each with probability $1/3$, then a $S_{u}$ step with probability $1/3$ and then an $E$ step with probability $1/3$.
		Thus the probability of a GSAW on $\Delta^{\ast}_{\infty}$ starting at the wide corner having $T^{u}_{n}$ as its first segment is $1/(3^{n+2})$.
		
		Thus
		\begin{align*}
			T^{u}(x) & = \sumover{\ell=0}{\infty}{t^{u}_{\ell}x^{\ell}} 
			          = \sumover{\ell=2}{\infty}{t^{u}_{\ell}x^{\ell}} 
			          = \sumover{n=0}{\infty}{t^{u}_{n+2}x^{n+2}} \\
			         & = \sumover{n=0}{\infty}{\frac{1}{3^{n+2}}x^{n+2}} 
			          = \frac{x^{2}}{9}\sumover{n=0}{\infty}{\left(\frac{x}{3}\right)^{n}} 
			         = \frac{\frac{x^{2}}{9}}{1 - \frac{x}{3}} 
			         = \boxed{\frac{x^{2}}{9-3x}}.
		\end{align*}\
	\end{proof}
	
	\begin{definition}
		We define a crooked path of order $n$ on $\Delta^{\ast}_{\infty}$ starting at the wide corner to be the path
		\begin{displaymath}
			C_{n} = \underbrace{E\cdots E}_{\text{$n$ steps}}S_{u}N_{u}.
		\end{displaymath}
	\end{definition}
	
	\begin{lemma}
		Let $c_{\ell}$ be the probability that a GSAW on $\Delta^{\ast}_{\infty}$ starting at the wide corner has $C_{n}$ as its first segment where $n+2 = \ell$.
		Then the ordinary generating function $C(x)$ of $c_{\ell}$ is
		\begin{displaymath}
			C(x) = \frac{x^{2}}{9-3x}.
		\end{displaymath}
	\end{lemma}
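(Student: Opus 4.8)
The plan is to leverage the fact that the crooked path $C_n = E^n S_u N_u$ coincides with the upward twist path $T^u_n = E^n S_u E$ in all but its final step, so that almost all of the probability computation has already been carried out in the preceding lemma. First I would track the walk's position. Starting from the wide corner $(1,1)$, each $E$ step moves the walk two units east along the top row, so after $n$ such steps it sits at $(1+2n,1)$; the subsequent $S_u$ step then drops it to $(2+2n,0)$ on the bottom row. These are exactly the first $n+1$ steps of $T^u_n$, and the argument for $T^u(x)$ already shows that each of them is taken with probability $1/3$: at the wide corner there are three neighbors, and at every intermediate top-row vertex the only previously occupied neighbor is the one immediately to the west, leaving three free choices. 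Hence the initial segment $E^n S_u$ contributes a factor of $\left(1/3\right)^{n+1}$.

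The only genuinely new ingredient is the terminal $N_u$ step from $(2+2n,0)$ to $(3+2n,1)$. I would verify that $(2+2n,0)$ has exactly four neighbors, namely $(2n,0)$ and $(4+2n,0)$ on the bottom row together with $(1+2n,1)$ and $(3+2n,1)$ on the top row, and that among these only $(1+2n,1)$ has already been visited. Thus three unoccupied neighbors remain, the $N_u$ target $(3+2n,1)$ among them, so the final step is taken with probability $1/3$. Multiplying through gives total probability $1/3^{n+2}$ for $C_n$, which in terms of the length $\ell = n+2$ reads $c_\ell = 1/3^{\ell}$ for all $\ell \geq 2$.

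Finally I would assemble the generating function by summing the geometric series, exactly as in the $T^u$ computation:
\begin{displaymath}
	C(x) = \sumover{\ell=2}{\infty}{\left(\frac{x}{3}\right)^{\ell}} = \frac{(x/3)^{2}}{1 - x/3} = \frac{x^{2}}{9-3x}.
\end{displaymath}
I expect no real obstacle: the whole argument mirrors the upward-twist lemma, and the single point demanding care is confirming that the terminal $N_u$ step lands on the unoccupied vertex $(3+2n,1)$ with exactly three neighbors free at that stage, so that the final factor is $1/3$ rather than $1/2$ or $1$. That the crooked and twist paths share a common generating function is then simply a reflection of their sharing all steps but the last, each of which carries the same weight.
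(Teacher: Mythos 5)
Your proof is correct and takes essentially the same approach as the paper, which simply observes that the argument from the upward-twist lemma carries over verbatim since the final $N_{u}$ step has the same probability as the final $E$ step of $T^{u}_{n}$ and contributes the same length. Your explicit check that $(2+2n,0)$ has exactly three unoccupied neighbors, so the last step indeed carries probability $1/3$, is precisely the detail the paper leaves implicit.
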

	
	\begin{proof}
		The proof is the same as in the upward twist case.
		The only different is that the last step is a $N_{u}$ as opposed to an $E$ step, the probability of the step is the same, and the length doesn't change.
	\end{proof}
	
	\begin{definition}
		We define an upwards hook path of order $n$ on $\Delta^{\ast}_{\infty}$ starting at the wide corner to be the path
		\begin{displaymath}
			H^{u}_{n} = \underbrace{E\cdots E}_{\text{$n$ steps}}S_{u}\underbrace{W\cdots W}_{\text{$n+1$ steps}}.
		\end{displaymath}
	\end{definition}

	The probability of a GSAW on $\Delta^{\ast}_{\infty}$ starting at the wide corner having $H^{u}_{n}$ as its first segment is
	\begin{displaymath}
		\frac{1}{3^{n+2}}.
	\end{displaymath}
	The length of $H^{u}_{n}$ is $2n+2$.
	
	\begin{lemma}
		Let $h^{u}_{\ell}$ be the probability that a GSAW on $\Delta^{\ast}_{\infty}$ starting at the wide corner has $H^{u}_{n}$ as its first segment where $2n + 2 = \ell$.
		Then the ordinary generating function $H^{u}(x)$ of $h^{u}_{\ell}$ is
		\begin{displaymath}
			H^{u}(x) = \frac{x^{2}}{9-3x^{2}}.
		\end{displaymath}
	\end{lemma}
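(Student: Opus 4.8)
The plan is to reproduce essentially verbatim the computation performed for the upward twist generating function $T^{u}(x)$, changing only the bookkeeping forced by the different length of the hook. Since the length of $H^{u}_{n}$ is $2n+2$, the coefficient $h^{u}_{\ell}$ is supported on even $\ell \ge 2$: explicitly $h^{u}_{2n+2} = 1/3^{n+2}$ for each $n \ge 0$, while $h^{u}_{\ell} = 0$ whenever $\ell$ is odd or $\ell < 2$. First I would collapse the defining sum $H^{u}(x) = \sum_{\ell=0}^{\infty} h^{u}_{\ell} x^{\ell}$ onto these nonzero terms and reindex by the order $n$ through the substitution $\ell = 2n+2$, obtaining $H^{u}(x) = \sum_{n=0}^{\infty} h^{u}_{2n+2}\, x^{2n+2}$.

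Next I would insert the stated probability $h^{u}_{2n+2} = 1/3^{n+2}$ and factor out the common constant $x^{2}/9$, which leaves the geometric series $\sum_{n=0}^{\infty} (x^{2}/3)^{n}$. Summing this series to $1/(1 - x^{2}/3)$ and simplifying the resulting expression $\dfrac{x^{2}/9}{1 - x^{2}/3}$ produces $\dfrac{x^{2}}{9 - 3x^{2}}$, as claimed. If desired, one can also re-derive the weight $1/3^{n+2}$ directly from the structure of $H^{u}_{n}$, exactly as in the twist lemma: the initial $n$ $E$-steps and the single $S_{u}$-step each occur with probability $1/3$, after which the $n+1$ trailing $W$-steps are forced, giving $(1/3)^{n+1}\cdot(1/3) = 1/3^{n+2}$.

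There is no genuine obstacle here, since the per-path weight is identical to that of the twist path; the entire content of the lemma is the change of length from $n+2$ to $2n+2$. The only point meriting attention is that this doubling of the length replaces the geometric ratio $x/3$ of the twist case by $x^{2}/3$, which is precisely what turns the denominator $9 - 3x$ of $T^{u}(x)$ into the denominator $9 - 3x^{2}$ of $H^{u}(x)$.
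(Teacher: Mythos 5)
Your generating-function manipulation is exactly the paper's: reindex by $\ell = 2n+2$, factor out $x^{2}/9$, and sum the geometric series with ratio $x^{2}/3$. That part is correct. The gap is in your re-derivation of the weight $h^{u}_{2n+2} = 1/3^{n+2}$, which is not optional decoration here --- it is the substantive probabilistic content of the lemma. You claim that after the $n$ $E$-steps and the $S_{u}$-step, ``the $n+1$ trailing $W$-steps are forced.'' That is false. After $E^{n}S_{u}$ the walk sits at the bottom-row vertex $(2n+2,0)$, and exactly one of its four neighbors, namely $(2n+1,1)$, is occupied; the three unoccupied neighbors are $(2n,0)$ (the $W$ step), $(2n+3,1)$ (an $N_{u}$ step), and $(2n+4,0)$ (an $E$ step). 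So the first $W$ step is a genuine random choice with probability $1/3$. Only after that step, when the walk is at $(2n,0)$ and both adjacent top-row vertices $(2n-1,1)$ and $(2n+1,1)$ are already occupied by the initial $E$-run, are the remaining $n$ $W$-steps forced. This is precisely the accounting in the paper: $n$ $E$-steps, the $S_{u}$-step, and the \emph{first} $W$-step each contribute $1/3$, and the rest are deterministic, giving $(1/3)^{n+2}$.

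Under your stated accounting the probability would be $(1/3)^{n+1}$, which would yield $H^{u}(x) = x^{2}/(3-x^{2})$, not the claimed $x^{2}/(9-3x^{2})$; your formula $(1/3)^{n+1}\cdot(1/3) = 1/3^{n+2}$ lands on the right number only because it contains a factor of $1/3$ that your own prose says should not be there. Relatedly, your remark that the per-path weight is ``identical to that of the twist path'' is true but for a non-obvious reason: in the twist $T^{u}_{n} = E^{n}S_{u}E$ all $n+2$ steps are random, whereas in the hook $H^{u}_{n} = E^{n}S_{u}W^{n+1}$ only the first $n+2$ steps are random and the last $n$ are forced. Correct the bookkeeping of which steps are random and your proof becomes complete and essentially identical to the paper's.
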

	
	\begin{proof}
		
		In order to create $H^{u}_{n}$ on $\Delta^{\ast}_{\infty}$, the path must move $n$ $E$ steps each with probability $1/3$, then a $S_{u}$ step with probability $1/3$ and then a $W$ step with probability $1/3$.
		The remaining $W$ steps are forced to exist, since there are no other options.
		Thus the probability of a GSAW on $\Delta^{\ast}_{\infty}$ starting at the wide corner having $H^{u}_{n}$ as its first segment is $1/(3^{n+2})$.
		
		Thus
		\begin{align*}
			H^{u}(x) & = \sumover{\ell=0}{\infty}{h^{u}_{\ell}x^{\ell}} 
			          = \sumover{\ell=2}{\infty}{h^{u}_{\ell}x^{\ell}} 
			          = \sumover{n=0}{\infty}{h^{u}_{2n+2}x^{2n+2}} \\
			         & = \sumover{n=0}{\infty}{\frac{1}{3^{n+2}}x^{2n+2}} 
			          = \frac{x^{2}}{9}\sumover{n=0}{\infty}{\left(\frac{x^{2}}{3}\right)^{n}} 
			          = \frac{\frac{x^{2}}{9}}{1 - \frac{x^{2}}{3}} 
			          =\boxed{ \frac{x^{2}}{9-3x^{2}}}.
		\end{align*}
	\end{proof}
	
	\begin{definition}
		We define a downward twist path of order $n$ on $\Delta^{\ast}_{\infty}$ starting at the wide corner to be the path
		\begin{displaymath}
			T^{d}_{n} = \underbrace{E\cdots E}_{\text{$n+1$ steps}}S_{d}E.
		\end{displaymath}
	\end{definition}

	The probability of a GSAW on $\Delta^{\ast}_{\infty}$ starting at the wide corner having $T^{d}_{n}$ as its first segment is
	\begin{displaymath}
		\frac{1}{2\cdot 3^{n+2}}.
	\end{displaymath}
	The length of $T^{d}_{n}$ is $n+3$.
	
	\begin{lemma}
		Let $t^{d}_{\ell}$ be the probability that a GSAW on $\Delta^{\ast}_{\infty}$ starting at the wide corner has $T^{d}_{n}$ as its first segment where $n+3 = \ell$.
		Then the ordinary generation function $T^{d}(x)$ of $t^{d}_{\ell}$ is
		\begin{displaymath}
			T^{d}(x) = \frac{x^{3}}{18-6x}.
		\end{displaymath}
	\end{lemma}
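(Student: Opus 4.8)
The plan is to mirror the computation used for the upward-twist lemma $T^{u}(x)$, since the only structural changes are a shift in the exponents and a different leading constant. First I would record the two facts established in the text immediately preceding the statement: a downward twist $T^{d}_{n}$ beginning at the wide corner appears as the first segment of a GSAW with probability $\frac{1}{2\cdot 3^{n+2}}$, and it has length $n+3$. Because the length determines the index, $t^{d}_{\ell}$ is nonzero only for $\ell = n+3$ with $n \geq 0$, so the series is supported on $\ell \geq 3$.

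Next I would collapse the generating function into a single geometric sum by substituting $\ell = n+3$, which would give
\begin{displaymath}
	T^{d}(x) = \sum_{\ell=3}^{\infty} t^{d}_{\ell}\, x^{\ell} = \sum_{n=0}^{\infty} \frac{1}{2\cdot 3^{n+2}}\, x^{n+3}.
\end{displaymath}
Pulling out the factors independent of $n$ would leave a geometric series in $x/3$, so that
\begin{displaymath}
	T^{d}(x) = \frac{x^{3}}{18}\sum_{n=0}^{\infty}\left(\frac{x}{3}\right)^{n} = \frac{x^{3}/18}{1 - x/3},
\end{displaymath}
and clearing the denominator would yield $\frac{x^{3}}{18-6x}$, the claimed closed form.

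I expect no genuine obstacle here; the computation is the same geometric-series manipulation used in the preceding precursor lemmas. The only point deserving attention is the index bookkeeping: unlike the upward twist and crooked paths, whose minimal length is $2$, the downward twist has minimal length $3$, so the series begins at $x^{3}$ rather than $x^{2}$. This accounts for the extra power of $x$ relative to $T^{u}(x)$, while the factor of $\tfrac{1}{2}$ in the closed form traces back to the probability of the terminal $E$ step being taken from a vertex with only two free neighbors---a local degree count already justified above the statement, which I would cite rather than reprove.
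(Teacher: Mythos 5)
Your proposal is correct and follows essentially the same route as the paper's proof: substitute $\ell = n+3$, use the probability $\tfrac{1}{2\cdot 3^{n+2}}$, and sum the geometric series in $x/3$ to obtain $\tfrac{x^{3}/18}{1-x/3} = \tfrac{x^{3}}{18-6x}$. The only cosmetic difference is that you cite the step-probability count (including the factor $\tfrac{1}{2}$ for the final $E$ step from a vertex with two free neighbors) from the text preceding the lemma, whereas the paper's proof restates that derivation inline; this does not change the argument.
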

	
	\begin{proof}
		
		In order to create $T^{d}_{n}$ on $\Delta^{\ast}_{\infty}$, the path must move $n+1$ $E$ steps each with probability $1/3$, then a $S_{d}$ step with probability $1/3$ and then an $E$ step with probability $1/2$.
		Thus the probability of a GSAW on $\Delta^{\ast}_{\infty}$ starting at the wide corner having $T^{d}_{n}$ as its first segment is $1/(2\cdot 3^{n+2})$.
		
		Thus
		\begin{align*}
			T^{d}(x) & = \sumover{\ell=0}{\infty}{t^{d}_{\ell}x^{\ell}} 
			      = \sumover{\ell=3}{\infty}{t^{d}_{\ell}x^{\ell}} 
			          = \sumover{n=0}{\infty}{t^{d}_{n+3}x^{n+3}} \\
			         & = \sumover{n=0}{\infty}{\frac{1}{2\cdot 3^{n+2}}x^{n+3}} 
			          = \frac{x^{3}}{18}\sumover{n=0}{\infty}{\left(\frac{x}{3}\right)^{n}} 
			          = \frac{\frac{x^{3}}{18}}{1 - \frac{x}{3}} 
			         = \boxed{\frac{x^{3}}{18-6x}}.
		\end{align*}
	\end{proof}
	
	\begin{definition}
		We define a downward hook path of order $n$ on $\Delta^{\ast}_{\infty}$ starting at the wide corner to be the path
		\begin{displaymath}
			H^{d}_{n} = \underbrace{E\cdots E}_{\text{$n+1$ steps}}S_{d}\underbrace{W\cdots W}_{\text{$n+1$ steps}}.
		\end{displaymath}
	\end{definition}

	The probability of a GSAW on $\Delta^{\ast}_{\infty}$ starting at the wide corner having $H^{d}_{n}$ as its first segment is
	\begin{displaymath}
		\frac{1}{2\cdot 3^{n+2}}.
	\end{displaymath}
	The length of $H^{d}_{n}$ is $2n+3$.
	
	\begin{lemma}
		Let $h^{d}_{\ell}$ be the probability that a GSAW on $\Delta^{\ast}_{\infty}$ starting at the wide corner has $H^{d}_{n}$ as its first segment where $2n+3 = \ell$.
		Then the ordinary generation function $H^{d}(x)$ of $h^{d}_{\ell}$ is
		\begin{displaymath}
			H^{d}(x) = \frac{x^{3}}{18-6x^{2}}.
		\end{displaymath}
	\end{lemma}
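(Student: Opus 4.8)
The plan is to follow the now-familiar template of the precursor lemmas (most closely that of the upward hook $H^{u}$ and the downward twist $T^{d}$): establish the step-by-step probability of realizing $H^{d}_{n}$, use the rigid relation $\ell = 2n+3$ between length and order to collapse the double indexing to a single sum, and then evaluate a geometric series.

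First I would verify the stated probability $1/(2\cdot 3^{n+2})$ by tracking unoccupied neighbors along the path, which starts at the wide corner. From the wide corner the first $E$ step has probability $1/3$ (three unoccupied neighbors), and each of the next $n$ $E$ steps, taken from an interior top vertex of degree four with exactly one occupied neighbor, again has probability $1/3$; the $S_{d}$ step is likewise one of three options, so it too has probability $1/3$. The one genuinely case-specific point is the first $W$ step: after $S_{d}$ the walk sits at a bottom vertex whose two diagonal-up neighbors are \emph{both} already occupied---one is the vertex just vacated by the $S_{d}$ step, the other is the preceding top vertex on the eastward run---leaving only the two horizontal neighbors available, so the $W$ step has probability $1/2$ (exactly as the final $E$ step does in the downward twist). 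Every subsequent $W$ step is then forced, and multiplying gives $(1/3)^{n+1}\cdot(1/3)\cdot(1/2)=1/(2\cdot 3^{n+2})$.

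With the probability in hand, I would note that $H^{d}_{n}$ has length $2n+3$, so $h^{d}_{\ell}$ is supported on odd $\ell\ge 3$ and equals $1/(2\cdot 3^{n+2})$ exactly when $\ell=2n+3$. Substituting into $H^{d}(x)=\sum_{\ell}h^{d}_{\ell}x^{\ell}$ and reindexing by $n$ leaves a single geometric series; factoring out $x^{3}/18$ gives a series in $x^{2}/3$ whose sum is
\[
H^{d}(x)=\frac{x^{3}/18}{1-x^{2}/3}=\frac{x^{3}}{18-6x^{2}}.
\]

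The only nonroutine step is the neighbor bookkeeping, and within it the sole subtlety is the $1/2$ weight on the first $W$ step; once that is settled the computation is the same geometric-series evaluation carried out in every earlier precursor lemma, so I anticipate no real obstacle.
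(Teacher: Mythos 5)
Your proof is correct and follows essentially the same route as the paper: establish the probability $1/(2\cdot 3^{n+2})$ of realizing $H^{d}_{n}$ by tracking unoccupied neighbors ($n+1$ $E$ steps and the $S_{d}$ step at probability $1/3$ each, the first $W$ step at probability $1/2$, the rest forced), then collapse to a single geometric series via $\ell = 2n+3$. Your neighbor bookkeeping is in fact more careful than the paper's, whose proof misstates the $1/2$-probability step after $S_{d}$ as an ``$E$ step'' when it is the first $W$ step---precisely the subtlety you single out and justify.
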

	
	\begin{proof}
		
		In order to create $H^{d}_{n}$ on $\Delta^{\ast}_{\infty}$, the path must move $n+1$ $E$ steps each with probability $1/3$, then a $S_{d}$ step with probability $1/3$ and then an $E$ step with probability $1/2$.
		The remaining $W$ steps are forced to exist, since there are no other options.
		Thus the probability of a GSAW on $\Delta^{\ast}_{\infty}$ starting at the wide corner having $H^{d}_{n}$ as its first segment is $1/(2\cdot 3^{n+2})$.
		
		Thus
		\begin{align*}
			H^{d}(x) & = \sumover{\ell=0}{\infty}{h^{d}_{\ell}x^{\ell}} 
			          = \sumover{\ell=3}{\infty}{h^{d}_{\ell}x^{\ell}} 
			          = \sumover{n=0}{\infty}{h^{d}_{2n+3}x^{2n+3}} \\
			         & = \sumover{n=0}{\infty}{\frac{1}{2\cdot 3^{n+2}}x^{2n+3}} 
			         = \frac{x^{3}}{18}\sumover{n=0}{\infty}{\left(\frac{x^{2}}{3}\right)^{n}} 
			          = \frac{\frac{x^{3}}{18}}{1 - \frac{x^{2}}{3}} 
			          = \boxed{\frac{x^{3}}{18-6x^{2}}}.
		\end{align*}
		
	\end{proof}
	
	\subsubsection{Solving the Triangular Case}
	
	\begin{definition}
	
		Let $w_{\ell}$ be the probability that a GSAW on $\Delta^{\ast}_{\infty}$ starting at the wide corner terminates with length $\ell$.
		Let $n_{\ell}$ be the probability that a GSAW on $\Delta^{\ast}_{\infty}$ starting at the narrow corner terminates with length $\ell$.
		
		Let $N(x)$ be the ordinary generation function of $n_{\ell}$ and $W(x)$ be the ordinary generation function of $w_{\ell}$.
	
	\end{definition}
	
	\begin{lemma}\label{lemma:narrow-corner}
		It is the case that
		\begin{displaymath}
			N(x) = \frac{x}{2}N(x) + \frac{x}{2}W(x).
		\end{displaymath}
	\end{lemma}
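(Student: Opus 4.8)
The plan is to condition on the first step of a GSAW started at the narrow corner and show that each of the two possible first moves reduces the problem to a fresh GSAW --- one started again at the narrow corner, one started at the wide corner. First I would record the local geometry: in $\Delta^{\ast}_{\infty}$ the narrow corner $(0,0)$ has exactly two neighbors, namely $(1,1)$ (reached by an $N_u$ step) and $(2,0)$ (reached by an $E$ step), so a uniform GSAW takes each with probability $1/2$, and each first step contributes one factor of $x$ to the length generating function.

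The key structural step is to observe that once the walk leaves $(0,0)$ it can never return, so every continuation lives on the induced subgraph $G' = \Delta^{\ast}_{\infty} \setminus \{(0,0)\}$, whose vertex set is $\{(a,b) \in V : a \ge 1\}$. I would then check that the relabelling $\psi(a,b) = (a-1, 1-b)$ is a graph isomorphism from $G'$ onto $\Delta^{\ast}_{\infty}$: it is a bijection onto $V$ (it restores the $a=0$ vertex and respects the parity condition that $a+b$ be even), and it preserves both edge types, since a diagonal pair with $|a_1-a_2|=|b_1-b_2|=1$ is sent to a diagonal pair and a horizontal pair with $b_1=b_2$ and $|a_1-a_2|=2$ is sent to a horizontal pair.

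With the isomorphism in hand, the two cases differ only in the image of the current vertex, because both delete exactly $(0,0)$. In the $N_u$ case the walk sits at $(1,1)$, which after the deletion retains the two available neighbors $(2,0)$ and $(3,1)$, and $\psi(1,1) = (0,0)$ is the narrow corner; in the $E$ case it sits at $(2,0)$, which retains the three available neighbors $(1,1)$, $(3,1)$, $(4,0)$, and $\psi(2,0) = (1,1)$ is the wide corner. Since $\psi$ carries the available-neighbor set of the current vertex bijectively onto that of its image, the uniform branching is preserved at every subsequent step, so each continuation is distributed exactly as a fresh GSAW from the narrow corner (respectively the wide corner) and has length generating function $N(x)$ (respectively $W(x)$). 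Summing the two weighted contributions $\tfrac{x}{2}N(x)$ and $\tfrac{x}{2}W(x)$ gives the claimed identity.

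The routine parts are the edge-type verification and the probability bookkeeping; the one place that needs care is confirming that the \emph{same} deletion of $(0,0)$ underlies both cases, so that the lone difference between them is whether $\psi$ sends the post-step position to the narrow corner or to the wide corner. This is what lets a single isomorphism serve both terms of the recursion, and --- since it is mildly counterintuitive that stepping \emph{onto} the wide corner $(1,1)$ produces the narrow-corner recursion while stepping $E$ produces the wide-corner one --- it is the crux I would make sure to state explicitly.
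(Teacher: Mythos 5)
Your proof is correct and takes essentially the same route as the paper's: condition on the two equally likely first steps from the narrow corner and recognize each continuation, via the deletion of $(0,0)$ and symmetry, as a fresh GSAW started at the narrow corner (after the $N_u$ step) or the wide corner (after the $E$ step). You go further than the paper by exhibiting the symmetry explicitly as the isomorphism $\psi(a,b) = (a-1,\,1-b)$ and by correctly sorting out which first step yields which corner --- a point worth the care, since the paper's own prose asserts (evidently a typo) that both cases lead to the wide corner, which would contradict the stated identity.
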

	
	\begin{proof}
		Consider a GSAW on $\Delta^{\ast}_{\infty}$ that starts at the narrow corner.
		Either it move $E$ with probability $1/2$ or it moves $N_{u}$ with probability $1/2$.
		
		If it moves $E$, then the rest of the path is a GSAW on $\Delta^{\ast}_{\infty}$ starting at the wide corner by symmetry.
		If it moves $N_{u}$, then the rest of the path is a GSAW on $\Delta^{\ast}_{\infty}$ starting at the wide corner by symmetry.
		Thus
		\begin{displaymath}
			N(x) = \frac{x}{2}N(x) + \frac{x}{2}W(x).
		\end{displaymath}
	\end{proof}
	
	\begin{lemma}\label{lemma:wide-corner}
		It is the case that
		\begin{displaymath}
			W(x) = H^{d}(x) + H^{u}(x) + T^{u}(x)W(x) + T^{d}(x)N(x) + \frac{x^{2}}{3}N(x).
		\end{displaymath}
	\end{lemma}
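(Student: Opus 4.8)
The plan is to prove the identity by a renewal-type decomposition of a GSAW rooted at the wide corner $(1,1)$, in direct analogy with the one-sided square ladder: I classify such a walk by its \emph{initial segment}, show that every admissible initial segment either traps the walker or severs the graph so that the residual accessible region is isomorphic to a fresh copy of $\Delta^{\ast}_{\infty}$ rooted at a wide or a narrow corner, and then translate this correspondence of configurations into a convolution of the precursor generating functions already computed. The engine behind each reduction is the top--bottom reflection $b\mapsto 1-b$ of $\Delta^{\ast}_{\infty}$, which fixes the family of rooted walks and lets me match a post-deviation configuration with a genuine corner.

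First I would peel off a maximal run $E^{n}$ of eastward steps. Each such step moves the walker along the top row to a vertex whose only free neighbors are again one $E$, one $S_{u}$, and one $S_{d}$ move, so the walker remains at a wide corner until its first non-$E$ deviation, which must be $S_{u}$ or $S_{d}$. After the deviation the walker sits on the lower row, and I split on whether it \emph{continues east} or is \emph{driven back west}. If the deviation is $S_{u}$: continuing east gives the upward twist $T^{u}_{n}=E^{n}S_{u}E$, after which the reflection identifies the residual region (with its single walled-off back-neighbor) with $\Delta^{\ast}_{\infty}$ rooted at a wide corner, contributing $T^{u}(x)W(x)$; retreating west forces a maximal run of $W$ steps into an occupied pocket and traps the walker, summing over $n$ to the upward-hook term $H^{u}(x)$. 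If the deviation is $S_{d}$ preceded by at least one $E$: continuing east gives the downward twist $T^{d}_{n}=E^{n+1}S_{d}E$, whose residual region the reflection identifies with $\Delta^{\ast}_{\infty}$ rooted at a narrow corner, contributing $T^{d}(x)N(x)$, while retreating west gives the downward-hook term $H^{d}(x)$.

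The one configuration not covered above is the degenerate case in which the very first step is $S_{d}$, with an empty leading $E$-run. Here the walker lands on the \emph{genuine} narrow corner $(0,0)$, which has degree two; its unique free neighbor forces an $E$ step with probability $1$ (rather than the generic probability $\tfrac{1}{2}$ seen when a downward deviation lands in the interior), so the two-step segment $S_{d}E$ carries weight $\tfrac{x^{2}}{3}$ and again exposes a fresh narrow corner. This contributes $\tfrac{x^{2}}{3}N(x)$. Collecting the terminating families $H^{u}(x)+H^{d}(x)$ with the three self-similar families $T^{u}(x)W(x)$, $T^{d}(x)N(x)$, and $\tfrac{x^{2}}{3}N(x)$ yields the stated equation for $W(x)$.

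The main obstacle I expect is making the self-similarity rigorous: after each twist exactly one previously visited vertex remains dangling behind the walker, and I must check that under the reflection $b\mapsto 1-b$ this vertex plays precisely the role of the extra neighbor attached to the new wide or narrow corner, so that no free site is double-counted or lost and the residual process is honestly a fresh rooted GSAW. A secondary subtlety, easy to overlook, is keeping the degenerate first-step $S_{d}$ (forced continuation, weight $\tfrac{x^{2}}{3}$) cleanly separated from the interior downward deviations captured by $T^{d}(x)$ and $H^{d}(x)$, since these differ only in whether the descent reaches the true boundary corner and hence in whether the following step is forced.
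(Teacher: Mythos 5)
Your decomposition has a genuine gap: after the segment $E^{n}S_{u}$ the walker sits on the bottom row at a vertex with \emph{three} free neighbors, not two. Stepping $S_{u}$ from $(2n+1,1)$ lands at $(2n+2,0)$; its neighbor $(2n+1,1)$ is occupied, but $(2n,0)$ (a $W$ step), $(2n+4,0)$ (an $E$ step), and $(2n+3,1)$ (an $N_{u}$ step) are all free. Your dichotomy ``continues east or is driven back west'' silently discards the $N_{u}$ branch, which is exactly the crooked path $C_{n}=E^{n}S_{u}N_{u}$ defined in the paper; after $C_{n}$ the residual region is a fresh copy of $\Delta^{\ast}_{\infty}$ rooted at a narrow corner, so a complete renewal decomposition produces an additional term $C(x)N(x)$ with $C(x)=\frac{x^{2}}{9-3x}$. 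With that branch dropped, your families do not partition the sample space --- the three options after $E^{n}S_{u}$ each carry probability $1/3$ and you account for only two of them --- so the identity you derive cannot be a valid probability decomposition.

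The twist is that the equation you were asked to prove is itself missing this term: the paper's own proof of this lemma concludes with $W(x) = H^{d}(x) + H^{u}(x) + T^{u}(x)W(x) + T^{d}(x)N(x) + C(x)N(x) + \frac{x^{2}}{3}N(x)$, and the subsequent theorem solving for $W(x)$ uses the version with $C(x)N(x)$; indeed only that version yields the quartic $x^{4}-6x^{3}+2x^{2}-30x+36$ appearing in the stated denominators of $W(x)$ and $N(x)$ (dropping it would give $x^{4}-4x^{3}+2x^{2}-30x+36$ instead). So the displayed lemma contains a typo, and your argument, by omitting precisely the case the typo omits, reproduces the misprinted statement rather than the true recurrence. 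The rest of your proposal --- peeling the maximal $E$-run, the forced $W$-runs in the hook cases, the genuinely two-option analysis after an interior $S_{d}$ (there the $N_{u}$ neighbor \emph{is} occupied, so east/west really is exhaustive), and the separate degenerate first-step $S_{d}$ with weight $\frac{x^{2}}{3}$ --- matches the paper's proof and is sound; only the $S_{u}$ case needs the third branch restored.
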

	
	\begin{proof}
		Consider a GSAW on $\Delta^{\ast}_{\infty}$ that starts at the wide corner.
		The first step must either be a $N_{d}$ step, a $N_{u}$ step for an $E$ step.
		With $1/3$ probability, the path takes a $N_{d}$ step, which must immediately be followed by an $E$ step since it is the only option.
		In this case two steps have been taken and the rest of the path is a GSAW on $\Delta^{\ast}_{\infty}$ starting at the narrow corner.
		
		If the first step is a $N_{u}$ step or $E$ step, then the GSAW must start with either a $H^{u}_{n},H^{d}_{n},T^{u}_{n}$,$T^{d}_{n}$ or $C_{n}$ path.
		If it is a $H^{d_{n}}$ or $H^{u}_{n}$ path, the GSAW terminates.
		If it is a $T^{u}_{n}$ path then the rest of the path is a GSAW on $\Delta^{\ast}_{\infty}$ starting at the wide corner.
		If it is a $T^{d}_{n}$ path then the rest of the path is a GSAW on $\Delta^{\ast}_{\infty}$ starting at the narrow corner.
		If it is a $C_{n}$ path then the rest of the path is a GSAW on $\Delta^{\ast}_{\infty}$ starting at the narrow corner.
		
		Thus
		\begin{displaymath}
			W(x) = H^{d}(x) + H^{u}(x) + T^{u}(x)W(x) + T^{d}(x)N(x) + C(x)N(x) + \frac{x^{2}}{3}N(x).
		\end{displaymath}
	\end{proof}
	
	\begin{theorem}
		\begin{displaymath}
			N(x) = \frac{x^{3}(x+2)(x-3)}{(x^{2}-3)(x^{4} - 6x^{3} + 2x^{2}-30x + 36)}
		\end{displaymath}
		and
		\begin{displaymath}
			W(x) = \frac{x^{2}(x+2)(x-3)(x-2)}{(3-x^{2})(x^{4} - 6x^{3} + 2x^{2}-30x + 36)}
		\end{displaymath}
	\end{theorem}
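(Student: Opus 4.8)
The plan is to treat Lemmas~\ref{lemma:narrow-corner} and~\ref{lemma:wide-corner} as a system of two linear equations in the two unknown generating functions $N(x)$ and $W(x)$, with coefficients in the field of rational functions in $x$, and simply solve it. All five precursor generating functions $T^{u}, C, H^{u}, T^{d}, H^{d}$ have already been computed in closed form in the preceding lemmas, so once they are substituted the system has explicit rational coefficients and the only remaining work is algebraic.

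First I would substitute the precursor functions. The narrow-corner relation rearranges immediately to $N(x) = \frac{x}{2-x}W(x)$, expressing one unknown in terms of the other. For the wide-corner relation I would first consolidate the coefficients: since $C(x) = T^{u}(x) = \frac{x^{2}}{3(3-x)}$, the two hook contributions combine as $H^{d}(x)+H^{u}(x) = \frac{x^{2}(x+2)}{6(3-x^{2})}$, while the three terms multiplying $N(x)$ combine as $T^{d}(x)+C(x)+\frac{x^{2}}{3} = \frac{x^{2}(8-x)}{6(3-x)}$. This reduces the wide-corner equation to $\left(1 - T^{u}(x)\right)W(x) - \frac{x^{2}(8-x)}{6(3-x)}N(x) = \frac{x^{2}(x+2)}{6(3-x^{2})}$.

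I would then solve the resulting $2\times 2$ system, most cleanly by Cramer's rule. The system determinant works out to $\frac{x^{4} - 6x^{3} + 2x^{2} - 30x + 36}{12(3-x)}$, so the quartic appearing in both denominators of the claimed formulas is exactly the numerator of this determinant; this is the structural heart of the result. Dividing the Cramer numerators by the determinant, cancelling the common factor $12(3-x)$, and rewriting $\frac{3-x}{3-x^{2}}$ as $\frac{x-3}{x^{2}-3}$ then yields the two stated closed forms, after which one verifies that the consistency relation $N(x) = \frac{x}{2-x}W(x)$ indeed holds between them.

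The main obstacle is purely the algebraic bookkeeping: combining the five precursor functions over their common denominators $3-x$ and $3-x^{2}$, and then verifying that the determinant numerator collapses to $x^{4} - 6x^{3} + 2x^{2} - 30x + 36$ with no sign or cancellation error. One subtlety worth flagging is that the displayed statement of Lemma~\ref{lemma:wide-corner} omits the term $C(x)N(x)$ that does appear in its proof; the coefficient $8-x$ rather than $6-x$ above, and hence the precise form of the stated answer, depends on retaining that term.
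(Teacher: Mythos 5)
Your proposal is correct and takes essentially the same route as the paper: both treat Lemmas~\ref{lemma:narrow-corner} and~\ref{lemma:wide-corner} as a linear system in $N(x)$ and $W(x)$, use $N(x)=\frac{x}{2-x}W(x)$ to eliminate one unknown, substitute the precursor generating functions, and obtain the quartic $x^{4}-6x^{3}+2x^{2}-30x+36$ in the denominator --- whether the elimination is organized as back-substitution (the paper) or Cramer's rule (you) is immaterial. Your flagged subtlety is also correct and worth keeping: the paper's own proof of the theorem uses the wide-corner equation \emph{with} the $C(x)N(x)$ term (as in that lemma's proof, not its displayed statement), which is precisely what yields your $x^{2}(8-x)/\bigl(6(3-x)\bigr)$ coefficient and hence the stated closed forms.
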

	
	\begin{proof}
		From lemma~\ref{lemma:narrow-corner} we can solve for $N(x)$ to get
		\begin{displaymath}
			N(x) = \frac{xW(x)}{2-x}
		\end{displaymath}
		
		Thus the equation from lemma~\ref{lemma:wide-corner} can be written as
		\begin{displaymath}
			W(x) = H^{d}(x) + H^{u}(x) + T^{u}(x)W(x) + \frac{xW(x)T^{d}(x)}{2-x} + \frac{xC(x)W(x)}{2-x} + \frac{x^{3}W(x)}{3(2-x)}.
		\end{displaymath}
		
		Solving for $W(x)$ we get that
		\begin{displaymath}
			W(x) = \frac{H^{d}(x) + H^{u}(x)}{1-T^{u}(x)-\frac{xT^{d}(x)}{2-x} - \frac{xC(x)}{2-x} - \frac{x^{3}}{3(2-x)}}.
		\end{displaymath}
		Substituting in the formulas for the known generation functions we get that
		\begin{displaymath}
			W(x) = \frac{x^{2}(x+2)(x-3)(x-2)}{(3-x^{2})(x^{4} - 6x^{3} + 2x^{2}-30x + 36)}
		\end{displaymath}
		and so
		\begin{displaymath}
			N(x) = \frac{x^{3}(x+2)(x-3)}{(x^{2}-3)(x^{4} - 6x^{3} + 2x^{2}-30x + 36)}
		\end{displaymath}
	\end{proof}
	
	\begin{corollary}
		The probability that a GSAW on $\Delta^{\ast}_{\infty}$ starting at either the wide or narrow corner terminates is 1.
	\end{corollary}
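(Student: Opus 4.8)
The plan is to mirror the termination-probability corollaries already established for $L^{\ast}_{\infty}$ and $L_{\infty}$. The total probability that a GSAW terminates is the sum of its termination probabilities over all possible lengths, and this sum is precisely the single-variable generating function evaluated at $x=1$. So for the narrow corner I would write that the probability of termination is
\begin{displaymath}
    \sumover{\ell=0}{\infty}{n_{\ell}} = N(1),
\end{displaymath}
and analogously $\sum_{\ell} w_{\ell} = W(1)$ for the wide corner, using the closed forms for $N(x)$ and $W(x)$ supplied by the preceding theorem.

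First I would justify that this evaluation is legitimate. Each $n_{\ell}$ and $w_{\ell}$ is a nonnegative number with partial sums bounded by $1$, so $N(x)$ and $W(x)$ have radius of convergence at least $1$; by Abel's theorem the value of the series at $x=1$ equals its limit from below, which agrees with the rational closed form provided $x=1$ is not a pole. I would verify this directly: in the denominator of $N(x)$ the factor $x^{2}-3$ equals $-2$ at $x=1$ and the quartic $x^{4}-6x^{3}+2x^{2}-30x+36$ equals $3$ at $x=1$, so the denominator is nonzero there; the same quartic together with $3-x^{2}=2$ disposes of $W(x)$. Hence both functions are continuous at $x=1$ and may simply be substituted into.

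Then the computation is immediate from the factored numerators and denominators. For the narrow corner I would compute
\begin{displaymath}
    N(1) = \frac{1^{3}(1+2)(1-3)}{(1^{2}-3)\bigl(1-6+2-30+36\bigr)} = \frac{(3)(-2)}{(-2)(3)} = 1,
\end{displaymath}
and for the wide corner
\begin{displaymath}
    W(1) = \frac{1^{2}(1+2)(1-3)(1-2)}{(3-1^{2})\bigl(1-6+2-30+36\bigr)} = \frac{(3)(-2)(-1)}{(2)(3)} = 1.
\end{displaymath}
Since both equal $1$ and no probability mass can exceed $1$, every GSAW started at either corner terminates with probability $1$.

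The only genuine subtlety, and the step I would be most careful about, is the analytic passage from the formal generating function to its value at $x=1$: confirming there is no pole at $x=1$ and invoking Abel's theorem so that $N(1)$ and $W(1)$ legitimately record the summed probabilities. Once that justification is in place the remaining arithmetic is routine, and the factored forms make the two evaluations transparent.
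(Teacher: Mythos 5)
Your proposal is correct and follows essentially the same route as the paper: the paper states this corollary without an explicit proof, but its evident intended argument (mirroring the proofs of the analogous corollaries for $L^{\ast}_{\infty}$ and $L_{\infty}$) is exactly your evaluation $N(1)=W(1)=1$ of the generating functions from the preceding theorem. Your added care about the pole-free evaluation at $x=1$ and Abel's theorem is a justification the paper omits, not a departure from its method.
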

	
	\begin{corollary}
		The expected number of steps taken before a GSAW on $\Delta^{\ast}_{\infty}$ starting at the wide corner terminates is $91/6$ with variance $793/4$.
		The expected number of steps taken before a GSAW on $\Delta^{\ast}_{\infty}$ starting at the narrow corner terminates is $103/6$ with variance $801/4$.
	\end{corollary}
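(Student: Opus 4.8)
The plan is to read off the first two moments of each trapping-length distribution from the generating functions $N(x)$ and $W(x)$ produced by the preceding theorem. The corollary immediately above guarantees that a GSAW on $\Delta^{\ast}_{\infty}$ terminates with probability $1$ from either corner, which is exactly the statement that $N(1) = W(1) = 1$; hence $N$ and $W$ are bona fide probability generating functions and all moments are obtained by differentiating at $x = 1$. Writing $f(x) = \sum_{\ell} p_{\ell} x^{\ell}$ for such a generating function, one has $\mathbb{E}(\ell) = f'(1)$ and, from the factorial moment $\mathbb{E}(\ell(\ell-1)) = f''(1)$, the variance
\begin{displaymath}
    \mathbb{V}(\ell) = f''(1) + f'(1) - \left(f'(1)\right)^{2},
\end{displaymath}
which is the same identity already used for $L^{\ast}_{\infty}(x,1)$ earlier in the paper. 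So the task reduces to evaluating $W'(1), W''(1), N'(1), N''(1)$.

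First I would treat the wide corner. Rather than differentiate the rational function symbolically, I would substitute $x = 1 + u$ and Taylor-expand the numerator $x^{2}(x+2)(x-3)(x-2)$ and the denominator $(3-x^{2})(x^{4} - 6x^{3} + 2x^{2} - 30x + 36)$ as polynomials in $u$ up to order $u^{2}$, then divide the two truncated series; the coefficients of $u^{0}, u^{1}, u^{2}$ in the quotient are $W(1) = 1$, $W'(1)$, and $\tfrac{1}{2}W''(1)$. This yields $\mathbb{E}(\ell) = W'(1) = 91/6$ and, via the variance identity, $\mathbb{V}(\ell) = 793/4$.

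For the narrow corner I would avoid repeating the computation by exploiting the relation $N(x) = \dfrac{xW(x)}{2-x}$ established in the proof of the theorem. The factor $\dfrac{x}{2-x} = \sum_{j \geq 1} 2^{-j} x^{j}$ is itself the probability generating function of a geometric random variable $G$ on $\{1, 2, \dots\}$ with $\mathbb{E}(G) = 2$ and $\mathbb{V}(G) = 2$. Since a product of generating functions corresponds to a sum of independent variables, the narrow-corner length satisfies $\ell_{N} \stackrel{d}{=} G + \ell_{W}$ with $G$ and $\ell_{W}$ independent. Therefore
\begin{displaymath}
    \mathbb{E}(\ell_{N}) = \mathbb{E}(\ell_{W}) + 2 = \frac{103}{6}, \qquad \mathbb{V}(\ell_{N}) = \mathbb{V}(\ell_{W}) + 2 = \frac{801}{4},
\end{displaymath}
matching the claimed values and giving a clean explanation of why both the means and the variances differ by exactly $2$.

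The only genuine obstacle is the wide-corner calculation, and it is computational rather than conceptual: the denominator contains the quartic $x^{4} - 6x^{3} + 2x^{2} - 30x + 36$, which has no rational roots and so cannot be factored into convenient pieces, so the second-order expansion carries bulky intermediate coefficients and one must watch the sign in the factor $(3-x^{2})$. I would keep the numerator and denominator series strictly separate until the final division, using the consistency check $W(1)=1$ to catch arithmetic slips before extracting the higher coefficients.
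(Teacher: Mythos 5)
Your proposal is correct, and I verified the arithmetic: with $A(x)=x^{5}-3x^{4}-4x^{3}+12x^{2}$ and $B(x)=(3-x^{2})(x^{4}-6x^{3}+2x^{2}-30x+36)$ one gets $W(1)=1$, $W'(1)=546/36=91/6$, $W''(1)=3718/9$, hence $\mathbb{V}(\ell_{W})=3718/9+91/6-(91/6)^{2}=7137/36=793/4$, and your narrow-corner values follow. The paper states this corollary without proof; the evidently intended argument, mirroring the proof given earlier for the square ladder $L^{\ast}_{\infty}$, is to differentiate \emph{both} $W(x)$ and $N(x)$ at $x=1$ and apply the identity $\mathbb{V}(\ell)=f''(1)+f'(1)-(f'(1))^{2}$ to each separately. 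Your treatment of the wide corner is exactly that. Where you genuinely depart from the paper is the narrow corner: instead of a second differentiation, you exploit the relation $N(x)=\frac{x}{2-x}\,W(x)$ from the proof of the theorem, recognize $\frac{x}{2-x}$ as the probability generating function of a geometric variable on $\{1,2,\dots\}$ with mean $2$ and variance $2$, and conclude $\ell_{N}\stackrel{d}{=}G+\ell_{W}$ with $G$ independent of $\ell_{W}$ (the independence is legitimate here, both formally, since a product of probability generating functions is the generating function of a convolution, and probabilistically, since Lemma~\ref{lemma:narrow-corner} says the walk renews at the wide corner after a geometric number of steps). This buys three things the paper's implied route does not: it halves the computation, it explains structurally \emph{why} both the means and the variances differ by exactly $2$ rather than leaving that as a numerical coincidence, and it supplies a built-in consistency check between the two halves of the corollary. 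The one cosmetic quibble is your remark that termination with probability $1$ ``guarantees'' $N(1)=W(1)=1$: in the paper that corollary is itself deduced by evaluating $N$ and $W$ at $1$, so you should cite the explicit formulas (or your own expansion) for $N(1)=W(1)=1$ rather than the corollary, to avoid circularity; this is a presentational point, not a gap.
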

	
	\subsubsection{The double sided case}
	
	\begin{definition}
		Let $\delta_{\ell}$ be the probability that a GSAW on $\Delta_{\infty}$ terminates with length $\ell$.
		
		Let $\Delta(x)$ be the ordinary generation function of $\delta_{\ell}$.
	\end{definition}
	
	 We observe that by symmetry, every starting point on $\Delta_{\infty}$ is the same.
	
	\begin{theorem}\label{theorem:trapping-length-triangle-lattice}
		\begin{displaymath}
			\Delta(x) = \frac{x^{4}(2+x)(x^{4}+x^{3} - 6x - 12)}{4(x^{2}-3)^{2}(x^{4} - 6x^{3} + 2x^{2} - 30x + 36)}
		\end{displaymath}
	\end{theorem}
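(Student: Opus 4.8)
The plan is to mimic the one-sided solution by fixing a start vertex and decomposing the walk at its first step, peeling off an initial segment that ``seals'' the strip, after which the remainder is a one-sided walk governed by $N(x)$ and $W(x)$ from the previous theorem. Since $\Delta_{\infty}$ is vertex-transitive, I may start at the bottom vertex $(0,0)$, which has four free neighbors. The first step is either \emph{diagonal} (into the top row) or \emph{horizontal} (along the bottom row), each with total probability $1/2$, and I treat the two cases separately.

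\textbf{Diagonal first step.} A single diagonal step to $(1,1)$ immediately occupies both endpoints of the triangle edge $\{(0,0),(1,1)\}$, and I will check that this pair is a vertex cut of $\Delta_{\infty}$: once both are occupied, the unvisited east and west parts are disconnected. From $(1,1)$ there are exactly three free neighbors, and I will verify directly that one choice lands on a \emph{wide corner} of a fresh one-sided strip, while the other two land on \emph{narrow corners} (one east, one west, the western one identified with a narrow corner through the glide reflection $(a,b)\mapsto(a-1,1-b)$, a graph automorphism carrying a top endpoint to a bottom endpoint). Summing these three handoffs together with the mirror-image first step contributes $\frac{1}{6}x^{2}\para{W(x)+2N(x)}$ to $\Delta(x)$.

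\textbf{Horizontal first step.} Here the strip is \emph{not} sealed: a horizontal run along the bottom leaves the whole top row free, so the walk can still leak back across the top. I will track the walk by its corridor depth $k$ (the number of bottom vertices carved so far) and set up generating functions $f_{k}$ for the remainder of a walk sitting at $(2k,0)$ having arrived along the bottom. From such a state the three moves are: another $E$ step (to depth $k+1$); an $N_{d}$ step up-and-back, which seals locally and hands off to a narrow corner after one more step; and an $N_{u}$ step up-and-forward, which does \emph{not} seal and must be analyzed further. I will show that from the $N_{u}$ branch an $E$ step along the top reproduces exactly a wide corner (independent of $k$), a downward step seals to a narrow corner, and a $W$ step triggers a \emph{forced} westward march back along the top row of length $k+1$, ending at $(-1,1)$, a narrow corner. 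These forced marches are what make the bookkeeping depth-dependent, producing a recursion of the shape
\[
	f_{k} = \frac{x}{3}f_{k+1} + \frac{x^{2}}{18}\para{5N(x)+2W(x)} + \frac{(3+2x)x}{18}N(x)\,x^{k},
\]
which I will solve with the ansatz $f_{k}=A(x)+B(x)x^{k}$, giving $A=\frac{3C}{3-x}$ and $B=\frac{3D}{3-x^{2}}$ for the constant and $x^{k}$-forcing coefficients $C,D$ read off above.

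Finally I will assemble $\Delta(x)=\frac{1}{6}x^{2}\para{W(x)+2N(x)}+\frac{x}{2}f_{1}$, substitute the explicit $N(x)$ and $W(x)$ (using $N(x)=\tfrac{x}{2-x}W(x)$), and simplify; as a consistency check one verifies $\Delta(1)=1$, matching termination with probability $1$. The main obstacle I anticipate is precisely the horizontal case: unlike the clean one-step seal of the square double-sided ladder, the triangular strip admits these position-dependent forced marches, so the reduction to $N$ and $W$ is not a single convolution but a depth recursion, and the delicate part is correctly classifying every intermediate vertex as a narrow or a wide corner (the two have different degrees and different continuation functions) and getting each step-count, hence each power of $x$, exactly right. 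The concluding algebraic simplification—clearing the spurious $(3-x)$ factor against the numerator to reach the stated denominator $4(x^{2}-3)^{2}(x^{4}-6x^{3}+2x^{2}-30x+36)$—is routine but must be carried out with care.
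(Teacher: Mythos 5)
Your plan is sound, and for the crux of the problem it takes a genuinely different route from the paper. The top-level split is identical: the diagonal-case term $\tfrac{1}{6}x^{2}\left(W(x)+2N(x)\right)$ is exactly the paper's $\tfrac{x}{2}\left(\tfrac{2x}{3}N(x)+\tfrac{x}{3}W(x)\right)$. But for the horizontal first step the paper uses no depth recursion at all: it classifies the entire continuation into the precursor families already computed on $\Delta^{\ast}_{\infty}$ (upward/downward hooks, upward/downward twists, crooked paths, plus an immediate $N_{d}$ move), observes that their generating functions are unchanged on the two-sided strip, and accounts for the leak across the top by a single forced extra $W$ step after a completed hook, giving
\begin{displaymath}
	\left(H^{d}(x)+H^{u}(x)\right)xN(x)+T^{u}(x)W(x)+T^{d}(x)N(x)+C(x)N(x)+\frac{x^{2}}{3}N(x).
\end{displaymath}
This bracket is precisely your $f_{1}$: solving your recursion $f_{k}=\tfrac{x}{3}f_{k+1}+\tfrac{x^{2}}{18}(5N+2W)+\tfrac{(3+2x)x}{18}Nx^{k}$ resums the runs of $E$ steps that the precursor functions already encapsulate, and your $x^{k}$-forcing term is exactly the paper's ``hook plus forced march'' (your $N_{u}$-then-$W$ and $N_{d}$-then-$W$ branches correspond to $H^{u}_{n}$ and $H^{d}_{n}$ followed by the extra $W$ step). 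I checked the agreement explicitly: your $N$-coefficient $\tfrac{5x^{2}}{6(3-x)}+\tfrac{(3+2x)x^{2}}{6(3-x^{2})}$ equals the paper's $\tfrac{x^{2}(8-x)}{6(3-x)}+\tfrac{x^{3}(2+x)}{6(3-x^{2})}$, and the $W$-coefficients are identical, so both assemblies produce the same function. What the paper's route buys is brevity and reuse of already-proved lemmas; what yours buys is self-containedness and a transfer-matrix flavor that generalizes (it is essentially the method of Part II), at the cost of having to justify that the ansatz $f_{k}=A+Bx^{k}$ is the unique admissible solution of a recursion that runs forward in $k$ (forward iteration converges as a formal power series, since the homogeneous solution grows like $(3/x)^{k}$ and is inadmissible).

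One concrete warning: your concluding consistency check will not confirm the printed formula — it will refute it. The statement's numerator factor $(x^{4}+x^{3}-6x-12)$ gives $\Delta(1)=-1$ and coefficient $-1/54$ at $x^{4}$, which is impossible for a probability generating function. Carried out correctly, your assembly (and the paper's own equation for $S(x)$) yields
\begin{displaymath}
	\Delta(x)=\frac{x^{4}(2+x)\left(12+6x-x^{3}-x^{4}\right)}{4(x^{2}-3)^{2}(x^{4}-6x^{3}+2x^{2}-30x+36)},
\end{displaymath}
i.e.\ the negative of the printed numerator; this version satisfies $\Delta(1)=1$ and has $x^{4}$-coefficient $+1/54$, matching both direct enumeration of the two length-$4$ trapped walks and the initial values quoted in the paper's discussion section. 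So the theorem as printed carries a sign error in its numerator, and your derivation, done correctly, proves the corrected statement rather than the literal one.
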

	
	\begin{proof}
		Without loss of generality assume we start at $(0,0)$ on $\Delta_{\infty}$.
		Then the first step must be one of $W,E,N_{d}$, and $N_{u}$.
		By symmetry we may assume that it is either a $N_{u}$ or $E$ step.
		
		If the first step of the path is a $N_{u}$, then the next step will prevent the path from looping back and cause the rest of the path is be a GSAW on $\Delta_{\infty}$
		If the second step is an $E$ step then the rest of the path is a GSAW on $\Delta_{\infty}$ starting at the wide corner.
		This has a $1/3$ probability of happening given that the first step was $N_{u}$.
		Otherwise the second step is either a $W$ or $S_{u}$ step and the rest of the path is a GSAW on $\Delta_{\infty}$ starting at the narrow corner.
		
		If the first step of the path is an $E$ step, then one of the following must happen.
		\begin{enumerate}
		
			\item
			
			    The Path Hooks Back:
			
				The path makes a $H^{u}_{n}$ or $H^{d}_{n}$ path.
				Since these paths only visits the vertex that can access the other side at the end, the generation functions are the same.
				A extra $W$ step is taken, at which point the rest of the path is a GSAW on $\Delta_{\infty}$ starting at the narrow corner.
			
			\item
			
			    The Path Twists:
			
				The path makes a $T^{u}_{n}$ or $T^{d}_{n}$ path.
				Since these paths never visits the vertex that can access the other side at the end, the generation functions are the same.
				The rest of the path is a GSAW on $\Delta_{\infty}$ starting at the corresponding corner.
			
			\item
			
			    The Path Becomes Crooked:
			
				The path makes a $C_{n}$ path at which point the rest of the path is a GSAW on $\Delta_{\infty}$ starting at the narrow corner.
			
			\item
			
			    The Path Takes a $N_{d}$ step:
			
				If the path takes a $N_{d}$ step, then it can either take a $W$ or $E$ step.
				However, in both cases the rest of the path is a GSAW on $\Delta_{\infty}$ starting at the narrow corner.
		\end{enumerate}
		
		Thus we have that
		\begin{align*}
			S(x) = & \frac{x}{2}\left(\frac{2x}{3}N(x) + \frac{x}{3}W(x)\right) \\
			       & + \frac{x}{2}\left[\left(H^{d}(x) + H^{u}(x)\right)xN(x) + T^{u}(x)W(x) + T^{d}(x)N(x) + C(x)N(x) + \frac{x^{2}}{3}N(x)\right]
		\end{align*}
		Since the other functions are known, substituting them in gives
		\begin{displaymath}
			S(x) = \frac{x^{4}(2+x)(x^{4}+x^{3} - 6x - 12)}{4(x^{2}-3)^{2}(x^{4} - 6x^{3} + 2x^{2} - 30x + 36)}.
		\end{displaymath}
	\end{proof}
	
	\begin{corollary}
		The probability that a GSAW on $\Delta_{\infty}$ terminates is 1.
	\end{corollary}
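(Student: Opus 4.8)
The plan is to mirror the two preceding termination corollaries (for $L^{\ast}_{\infty}$ and $L_{\infty}$), which were each settled by evaluating the relevant generating function at the all-ones point. Since $\delta_{\ell}$ is the probability that a GSAW on $\Delta_{\infty}$ terminates after exactly $\ell$ steps, the total probability of termination is $\sum_{\ell=0}^{\infty} \delta_{\ell}$, and by the definition of the ordinary generating function this is exactly $\Delta(1)$. So the entire corollary reduces to the single evaluation $\Delta(1) = 1$. To make this rigorous I would note that the partial sums $\sum_{\ell \le L} \delta_{\ell}$ are nondecreasing and bounded above by $1$, so the series converges; Abel's theorem then gives $\lim_{x \to 1^{-}} \Delta(x) = \sum_{\ell} \delta_{\ell}$, and since $\Delta$ is a rational function this limit agrees with $\Delta(1)$ provided $x = 1$ is not a pole.

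First I would confirm that $x = 1$ is indeed not a pole, using the closed form from Theorem~\ref{theorem:trapping-length-triangle-lattice}: the denominator factors $(x^{2}-3)^{2}$ and $(x^{4} - 6x^{3} + 2x^{2} - 30x + 36)$ evaluate at $x = 1$ to $4$ and $3$ respectively, both nonzero, so the substitution is legitimate. I would then substitute $x = 1$ directly, evaluating the numerator factors $x^{4}$, $(2+x)$, and $(x^{4}+x^{3}-6x-12)$ against the denominator factors $4$, $(x^{2}-3)^{2}$, and $(x^{4} - 6x^{3} + 2x^{2} - 30x + 36)$, and verify that the resulting quotient collapses to $1$.

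The main obstacle is bookkeeping rather than anything conceptual. Because the intermediate generating functions carry opposite sign conventions in their denominators---$W(x)$ is written over $(3 - x^{2})$ whereas $N(x)$ is written over $(x^{2}-3)$---these signs propagate through the assembled expression for $\Delta(x)$, and the delicate point is to confirm that they combine so that numerator and denominator share the same sign at $x = 1$, producing $+1$. I would therefore carry the signs explicitly through the factored forms rather than cancelling them prematurely, so that the final evaluation is unambiguous.
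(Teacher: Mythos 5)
Your strategy is the intended one and mirrors how the paper handles the analogous corollaries for $L^{\ast}_{\infty}$ and $L_{\infty}$ (evaluate the generating function at the all-ones point), and your Abel-type justification for why the series sum equals the rational function's value at a non-pole is sound. The problem is the step you defer to ``bookkeeping'': the verification that the quotient collapses to $1$ \emph{fails} for the formula as printed in Theorem~\ref{theorem:trapping-length-triangle-lattice}. At $x=1$ the numerator is $1\cdot(2+1)\cdot(1+1-6-12) = -48$, while the denominator is $4\cdot(1-3)^{2}\cdot(1-6+2-30+36) = 4\cdot 4\cdot 3 = 48$, so the printed $\Delta(x)$ evaluates to $-1$, not $+1$. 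The sign cannot be rescued by the $(3-x^{2})$ versus $(x^{2}-3)$ convention you flag, because that factor appears squared; the issue is that the printed closed form is the \emph{negative} of the true generating function. You can see this independently of the termination question: the printed expression has leading term $-x^{4}/54$, whereas $\delta_{4}$, the probability of the shortest trapped walk, must be $+1/54$ --- exactly the initial value the paper itself quotes in the Discussion for the triangular recursion.

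So a proof that takes the printed theorem at face value proves $\Delta(1)=-1$, which is absurd for a probability; to close the gap you must do one of two things. Either re-derive the closed form from the decomposition in the theorem's proof (carrying the signs through, as you propose), which yields the printed expression multiplied by $-1$ and then evaluates to $+1$; or, more robustly, bypass the closed form entirely and evaluate the structural equation for $S(x)$ in that proof at $x=1$. Using $W(1)=N(1)=1$ from the preceding corollary, together with $H^{u}(1)=T^{u}(1)=C(1)=\tfrac{1}{6}$ and $H^{d}(1)=T^{d}(1)=\tfrac{1}{12}$ from the precursor lemmas, the decomposition gives
\begin{displaymath}
\Delta(1) \;=\; \frac{1}{2}\left(\frac{2}{3}+\frac{1}{3}\right) \;+\; \frac{1}{2}\left(\frac{1}{4}+\frac{1}{6}+\frac{1}{12}+\frac{1}{6}+\frac{1}{3}\right) \;=\; \frac{1}{2}+\frac{1}{2} \;=\; 1,
\end{displaymath}
which rests only on Lemmas~\ref{lemma:narrow-corner} and~\ref{lemma:wide-corner} and the precursor generating functions, not on the final algebraic simplification where the sign was lost.
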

	
	\begin{corollary}
		The expected number of steps taken before a GSAW on $\Delta_{\infty}$ starting at the wide corner terminates is $941/48\approx19.6$ with variance $51919/256 \approx 202.8$ (standard deviation $\approx 14.2$).
	\end{corollary}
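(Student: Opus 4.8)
The plan is to treat $\Delta(x)$ from Theorem~\ref{theorem:trapping-length-triangle-lattice} as the probability generating function of the trapping length and to extract its first two moments by differentiation at $x=1$, exactly as was done for $L^{\ast}_{\infty}$ in the one-sided square case. Since $\Delta(x) = \sum_{\ell} \delta_{\ell} x^{\ell}$ with $\delta_{\ell}$ the probability of termination at length $\ell$, and since the preceding corollary establishes $\Delta(1) = 1$ (so $\{\delta_{\ell}\}$ is a genuine probability distribution), the mean is $\mathbb{E}(\ell) = \sum_{\ell} \ell\, \delta_{\ell} = \Delta'(1)$ and the second factorial moment is $\sum_{\ell} \ell(\ell-1)\delta_{\ell} = \Delta''(1)$. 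I would therefore obtain the variance from the identity
\begin{displaymath}
\mathbb{V}(\ell) = \Delta''(1) + \Delta'(1) - \left(\Delta'(1)\right)^{2},
\end{displaymath}
which is the same formula already invoked in the one-sided corollary.

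The computation then reduces to differentiating the rational function twice and evaluating at $x=1$. First I would note that $x=1$ is not a pole: the denominator factor $(x^{2}-3)^{2}$ and the quartic $x^{4} - 6x^{3} + 2x^{2} - 30x + 36$ are both nonzero at $x=1$, so $\Delta$ is analytic in a neighborhood of $x=1$ and the moment sums genuinely equal $\Delta'(1)$ and $\Delta''(1)$. To avoid a brute-force quotient-rule expansion, I would use logarithmic differentiation on the factored form, writing $\Delta = P/Q$ with $P$ and $Q$ the displayed products, so that
\begin{displaymath}
\frac{\Delta'(x)}{\Delta(x)} = \frac{P'(x)}{P(x)} - \frac{Q'(x)}{Q(x)},
\end{displaymath}
evaluating each logarithmic-derivative term at $x=1$ and then using $\Delta(1)=1$ to read off $\Delta'(1) = 941/48$. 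Differentiating the same identity once more and combining with the already-computed $\Delta'(1)$ yields $\Delta''(1)$, from which $\mathbb{V}(\ell) = 51919/256$ follows.

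The main obstacle is purely the bookkeeping of the second derivative: $\Delta$ is a ratio of a degree-nine numerator over a degree-eight denominator, so computing $\Delta''(1)$ cleanly requires care in tracking the contributions of the repeated factor $(x^{2}-3)^{2}$ and of the quartic, together with their derivatives at $x=1$. Logarithmic differentiation keeps this manageable, since each factor contributes independently; the only slightly delicate point is assembling
\begin{displaymath}
\Delta''(x) = \Delta(x)\left[ \left(\log \Delta(x)\right)'' + \left(\left(\log \Delta(x)\right)'\right)^{2} \right]
\end{displaymath}
and evaluating at $x=1$. No conceptual difficulty arises beyond verifying that the arithmetic collapses to the stated rationals.
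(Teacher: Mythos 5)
Your proposal is correct and is essentially the paper's own method: the paper states this corollary without a separate proof, relying on exactly the moment-extraction argument it spells out for the one-sided square ladder, namely $\mathbb{E}(\ell)=\Delta'(1)$ and $\mathbb{V}(\ell)=\Delta''(1)+\Delta'(1)-\left(\Delta'(1)\right)^{2}$ applied to the generating function of Theorem~\ref{theorem:trapping-length-triangle-lattice}. Your additional observations (checking that $x=1$ is not a pole, organizing the computation by logarithmic differentiation) are sound refinements of the same computation rather than a different route.
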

	
\subsection{Biased Square Ladders}

\subsubsection{Definitions}

	\begin{definition}
		Let $G = (V,E)$ be a graph.
		Then a nearest-neighbor self-attraction GSAW is a GSAW where each step is as follows.
		Let $P = (v_{1},v_{2},\cdots,v_{n})$ be a path on the graph G.
		Let $M = N(v_{n})\setminus P$ be the set of all neighbors of $v_{n}$ that are not in the path.
		
		For any vertex $w$ in $G$ not already on the path, its energy is $E(w) = C^{\abs{N(w)\cap (P \setminus v_{n})}}$, where $C$ is a positive constant.
		Thus the energy of a vertex is a constant raised to the power of the number of neighbors already in the path.
		
		The probability of $P$ moving to the neighboring vertex $w^{\ast}$ is
		\begin{displaymath}
			\frac{E(w^{\ast})}{\sum\limits_{w \in M}E(w)}
		\end{displaymath}
	\end{definition}
	
	\begin{figure}
		\begin{center}
			\begin{tikzpicture}[scale=1]
				\draw[help lines] (0,-1) grid (2,1);
				\filldraw (.125,.125) -- (-.125,.125) -- (-.125,-.125) -- (.125,-.125) -- cycle;
				\filldraw (1,0) ++(0,.125) -- ++(-.125,-.125) -- ++(.125,-.125) -- ++(.125,.125) -- ++(-.125,.125);
				\filldraw (0,-1) circle (.1);
				\filldraw (1,-1) circle (.1);
				\filldraw (2,-1) circle (.1);
				\filldraw (1,0) circle (.1);
				\filldraw (2,0) circle (.1);
				\filldraw (0,1) circle (.1);
				\filldraw (1,1) circle (.1);
				\filldraw (2,1) circle (.1);
				\node[anchor=south east] at (0,0) {$v_{1}$};
				\node[anchor=south east] at (1,0) {$v_{2}$};
				\node[anchor=south east] at (2,0) {$w_{1}$};
				\node[anchor=south east] at (1,-1) {$w_{2}$};
				\node[anchor=south east] at (1,1) {$w_{3}$};
				\draw[ultra thick,postaction={on each segment={mid arrow=black}}] (0,0) -- (1,0);
			\end{tikzpicture}
		\end{center}
		\caption{Since $w_{1},w_{2},$ and $w_{3}$ have no neighbors that belong to $P$ that are not $v_{2}$ their energy is $C^{0} = 1$, and so the probability of visiting any one of them is $1/3$.}
		\label{figure:nearest-neighbor-example-1}
	\end{figure}
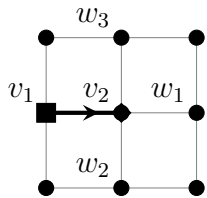
	
	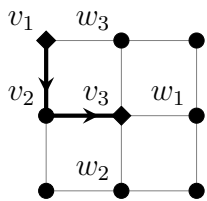
\begin{figure}
		\begin{center}
			\begin{tikzpicture}[scale=1]
				\draw[help lines] (0,-1) grid (2,1);
				\filldraw (0,1) ++ (.125,0) -- ++(-.125,.125) -- ++(-.125,-.125) -- ++(.125,-.125) -- cycle;
				\filldraw (1,0) ++(0,.125) -- ++(-.125,-.125) -- ++(.125,-.125) -- ++(.125,.125) -- ++(-.125,.125);
				\filldraw (0,-1) circle (.1);
				\filldraw (1,-1) circle (.1);
				\filldraw (2,-1) circle (.1);
				\filldraw (0,0) circle (.1);
				\filldraw (2,0) circle (.1);
				\filldraw (1,1) circle (.1);
				\filldraw (2,1) circle (.1);
				\node[anchor=south east] at (0,1) {$v_{1}$};
				\node[anchor=south east] at (0,0) {$v_{2}$};
				\node[anchor=south east] at (1,0) {$v_{3}$};
				\node[anchor=south east] at (2,0) {$w_{1}$};
				\node[anchor=south east] at (1,-1) {$w_{2}$};
				\node[anchor=south east] at (1,1) {$w_{3}$};
				\draw[ultra thick,postaction={on each segment={mid arrow=black}}] (0,1) -- (0,0) -- (1,0);
			\end{tikzpicture}
		\end{center}
		\caption{Since $w_{1}$ and $w_{2}$ have no neighbors that belong to $P$ that are not $v_{3}$ their energy is $C^{0} = 1$.
		The vertex $w_{3}$ has one neighbor that belongs to $P$ that is not $v_{3}$ and so its energy is $C^{1} = C$.
		Thus the probabilities of $w_{1}$ or $w_{2}$ being the next vertex is $\frac{1}{2+C}$ and the probability of $w_{3}$ being the next vertex are each $\frac{C}{2+C}$}
		\label{figure:nearest-neighbor-example-2}
	\end{figure}

\subsubsection{Precursor Functions}

	\begin{lemma}
		Let $h^{\ast}_{\ell,w}$ be the probability that a GSAW with nearest-neighbor weighting on $L^{\ast}_{\infty}$ terminates as a hook path of length $\ell$ and width $w$.
		Then the bivariate ordinary generation function $H^{\ast}(x,y)$ of $h^{\ast}_{\ell,w}$ is
		\begin{displaymath}
			H^{\ast}(x,y) = \frac{C}{1+C} \cdot \frac{x^{3}y}{4-2x^{2}y}
		\end{displaymath}
	\end{lemma}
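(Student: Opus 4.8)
The plan is to imitate the unbiased hook-path lemma, recomputing only the per-step probabilities under the nearest-neighbor weighting. Recall that a terminating hook path is $H^{\ast}_{n} = E\cdots E\,N\,W\cdots W$ (with $n$ steps $E$ and $n$ steps $W$) for $n \geq 1$; it has length $\ell = 2n+1$ and width $w = n$, and the degenerate case $n=0$ is excluded because a lone $N$ step does not trap. Since the step \emph{sequence} defining $H^{\ast}_{n}$ is unchanged, the only task is to evaluate the weighted probability that a GSAW follows exactly this sequence, and then to sum the resulting bivariate generating function.

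First I would compute the probability of each step. Starting from the corner $(1,1)$, the walk runs East to $(n+1,1)$, turns North to $(n+1,2)$, and returns West to $(1,2)$, where it is trapped. For each of the $n$ East steps and for the single North step, I claim both available candidate vertices carry energy $C^{0}=1$: at every such step the two unoccupied neighbors of the current vertex $v$ have, apart from $v$ itself (which the energy rule excludes), no occupied neighbor, since the occupied sites all lie behind $v$ along the bottom row. Hence each of these $n+1$ steps has probability $\tfrac{1}{2}$, exactly as in the unbiased case. The decisive asymmetry is the first West step out of $(n+1,2)$: its westward target $(n,2)$ sits directly above the already-occupied site $(n,1)$ and therefore has energy $C$, whereas the eastward alternative sits above an empty site and has energy $1$; thus the West step is taken with probability $\tfrac{C}{1+C}$. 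Every subsequent West step is forced. Collecting these factors gives
\[
  h^{\ast}_{2n+1,n} \;=\; \left(\tfrac{1}{2}\right)^{n}\cdot \tfrac{1}{2}\cdot \frac{C}{1+C} \;=\; \frac{C}{1+C}\left(\tfrac{1}{2}\right)^{n+1}.
\]

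Finally I would assemble the generating function. Since $h^{\ast}_{\ell,w}$ is supported on $\ell = 2n+1,\ w = n$ for $n \geq 1$, the double sum collapses to a single geometric series; reindexing by $k = n-1$ and summing $\sum_{k\ge 0}(x^{2}y/2)^{k}$ yields
\[
  H^{\ast}(x,y) \;=\; \frac{C}{1+C}\sum_{n\ge 1}\left(\tfrac12\right)^{n+1}x^{2n+1}y^{n} \;=\; \frac{C}{1+C}\cdot\frac{x^{3}y}{4-2x^{2}y},
\]
which is the claimed formula; as a check, $C=1$ recovers the unbiased generating function $\tfrac{x^{3}y}{8-4x^{2}y}$. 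I expect the main obstacle to be the energy bookkeeping rather than the algebra: one must verify carefully that during the entire Eastward and Northward portion every candidate vertex really does have energy $1$ (so those probabilities are unaffected by the bias), and pinpoint that the sole self-attraction effect enters at the turning West step, where exactly one of the two options borders an occupied site.
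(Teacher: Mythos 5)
Your proof is correct and follows essentially the same route as the paper's: the first $n+1$ steps (the East run and the North turn) each occur with probability $\tfrac{1}{2}$ because all candidate sites have energy $C^{0}=1$, the single biased choice is the first West step taken with probability $\tfrac{C}{1+C}$, the remaining West steps are forced, and the resulting geometric series sums to $\tfrac{C}{1+C}\cdot\tfrac{x^{3}y}{4-2x^{2}y}$. Your energy bookkeeping is in fact more explicit than the paper's (which merely asserts the $\tfrac12$ probabilities), and the $C=1$ consistency check is a nice addition.
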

	
	\begin{proof}
		Since we are only considering hook paths that terminate, $H_{0}$ is not considered.
		
		The first $n+1$ steps all have probability $1/2$ of happening since their neighbors are adjacent to the path.
		However after moving $N$, the $E$ step has no neighbor in the path and the $W$ step as one neighbor in the path.
		Thus the probability of taking the first $W$ step is $C/(1+C)$.
		
		Thus for the remaining hook paths $H^{\ast}_{n}$, each as length $2n+1$ and probability $(1/2)^{n+1}\frac{C}{1+C}$.
		
		Thus
		\begin{align*}
			H^{\ast}(x,y) & = \sumover{\ell=0}{\infty}{\sumover{w=0}{\infty}{h^{\ast}_{\ell,w}x^{\ell}y^{w}}} 
			               = \sumover{w=0}{\infty}{h^{\ast}_{2w+1,w}x^{2w+1}y^{w}} 
			               = \sumover{k=1}{\infty}{h^{\ast}_{2k+1,k}x^{2k+1}y^{k}} \\
			              & = \sumover{k=1}{\infty}{\left(\frac{1}{2}\right)^{k+1}\frac{C}{1+C} x^{2k+1}y^{k}} 
			               = \sumover{k=0}{\infty}{\left(\frac{1}{2}\right)^{k+2}\left(\frac{C}{1+C}\right)x^{2k+3}y^{k+1}} \\
			               &= \frac{x^{3}y}{4}\left(\frac{C}{1+C}\right)\sumover{k=0}{\infty}{\left(\frac{x^{2}y}{2}\right)^{k}} 
			               =\frac{C}{1+C} \cdot \frac{\frac{x^{3}y}{4}}{1 - \frac{x^{2}y}{2}} 
			               = \boxed{\frac{C}{1+C} \cdot \frac{x^{3}y}{4-2x^{2}y}}.
		\end{align*}
		
	\end{proof}
	
	\begin{lemma}
		Let $t^{\ast}_{\ell,w}$ be the probability that the first $\ell$ steps of a GSAW with nearest-neighbor weighing on $L^{\ast}_{\infty}$ is a twist path of length $\ell$ and width $w$.
		Then the bivariate ordinary generating function $T^{\ast}(x,y)$ of $t^{\ast}_{\ell,w}$ is
		\begin{displaymath}
			T^{\ast}(x,y) = \frac{x^{2}y}{2} + \frac{1}{1+C}\frac{x^{3}y^{2}}{4-2xy}.
		\end{displaymath}
	\end{lemma}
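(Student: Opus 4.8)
The plan is to mirror the proof of the unbiased twist lemma: I would compute the probability that the first segment of a nearest-neighbor self-attraction GSAW on $L^{\ast}_{\infty}$ is the twist path $T^{\ast}_{n} = E^{n}NE$, and then assemble the generating function from those probabilities. Since $T^{\ast}_{n}$ has length $\ell = n+2$ and width $w = n+1$ (so that $\ell = w+1$), the only new ingredient relative to the unbiased case is tracking how the energy weights alter the individual step probabilities.

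First I would argue that, for $n \geq 1$, the first $n$ east steps and the following north step each occur with probability $1/2$. At each of these stages the endpoint has exactly two available (unoccupied) neighbors, and each such neighbor is adjacent to no occupied vertex other than the current endpoint itself; hence every candidate has energy $C^{0} = 1$, the walk selects uniformly, and these steps contribute a factor $\left(\tfrac{1}{2}\right)^{n+1}$. The decisive step is the final east step of the twist. After $E^{n}N$ the endpoint sits on the top row with available neighbors to the east and to the west: the eastward vertex has energy $1$, but the westward vertex lies directly above an already-occupied bottom-row vertex and so has energy $C$. The east (twist) step therefore has probability $\frac{1}{1+C}$, so that for $n \geq 1$ the probability of $T^{\ast}_{n}$ is $\left(\tfrac{1}{2}\right)^{n+1}\cdot\frac{1}{1+C}$.

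The case $n = 0$ must be handled separately: here $T^{\ast}_{0} = NE$, and after the initial north step the endpoint is at the boundary corner on the top row, which has no westward neighbor, so the final east step is forced and the bias is irrelevant; the probability is simply $1/2$. With these probabilities in hand, I would substitute $\ell = n+2$ and $w = n+1$, isolate the $n=0$ term $\frac{x^{2}y}{2}$, and sum the resulting geometric series over $n \geq 1$ to obtain $\frac{1}{1+C}\cdot\frac{x^{3}y^{2}}{4-2xy}$, yielding the claimed formula. The main obstacle is purely the local energy bookkeeping: one must verify that at every stage there are exactly two available neighbors and that only the westward neighbor on the terminal step accrues an occupied neighbor, which is precisely what isolates the single factor of $\frac{1}{1+C}$ and distinguishes this case from its unbiased counterpart.
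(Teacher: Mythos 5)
Your proposal is correct and follows essentially the same route as the paper: compute the step probabilities directly (with the final east step of the twist weighted $\tfrac{1}{1+C}$ against the energy-$C$ westward neighbor), treat the $n=0$ case $NE$ separately as a forced step of probability $\tfrac12$, and sum the geometric series using $\ell = w+1$. Your energy bookkeeping is in fact more explicit than the paper's, which simply asserts the $\tfrac{1}{1+C}$ factor because ``there now exist points to the left,'' but the argument is the same.
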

	
	\begin{proof}
		We observe that the minimum length of a twist path is 2, and that for any twist path $\ell = 1 + w$.
		If the twist path is of length two, then it must first move $N$ with probability $1/2$ and then it has no options but to move $E$.
		
		Otherwise it first makes an $E$ step with probability $1/2$ and continues to make $E$ steps with  the same probability, until it takes a $N$ step with probability $1/2$.
		Since there now exist points to the left, the probability of the last $E$ step is $1/(1+C)$.
		Thus
		\begin{align*}
			T^{\ast}(x,y) & = \sumover{\ell=0}{\infty}{\sumover{w=0}{\infty}{t^{\ast}_{\ell,w}x^{\ell}y^{w}}} 
			        = \sumover{\ell=2}{\infty}{t^{\ast}_{\ell,\ell-1}x^{\ell}y^{\ell-1}} 
			       = t^{\ast}_{2,1}x^{2}y + \sumover{\ell=3}{\infty}{t_{\ell,\ell-1}x^{\ell}y^{\ell-1}} \\
			       & = t^{\ast}_{2,1}x^{2}y + \sumover{k=0}{\infty}{t_{k+3,k+2}x^{k+3}y^{k+2}} 
			        = \frac{x^{2}y}{2} + \sumover{k=0}{\infty}{\frac{1}{1+C}\left(\frac{1}{2}\right)^{k+2}x^{k+3}y^{k+2}} \\
			       & = \frac{x^{2}y}{2} + \frac{x^{3}y^{2}}{(1+C)4}\sumover{k=0}{\infty}{\left(\frac{xy}{2}\right)^{k}} 
			        = \frac{x^{2}y}{2} + \frac{\left(\frac{x^{3}y^{2}}{(1+C)4}\right)}{1 - \frac{xy}{2}} 
			        =\boxed{ \frac{x^{2}y}{2} + \frac{x^{3}y^{2}}{(1+C)(4-2xy)}}.
		\end{align*}

	\end{proof}
	
	\begin{figure}
		\begin{center}
			\begin{tikzpicture}[scale=1]
				\fill[black!50!white] (-.25 ,-.25) rectangle (0.5,1.25);
				\draw[very thick] (-.25 ,-.25) rectangle (0.5,1.25);
				\draw[help lines,black] (0,0) grid (4,1);
				\draw[help lines,black] (4,1) -- (4.25,1);
				\draw[help lines,black] (4,0) -- (4.25,0);
				\draw (4.5,0.5) node {$\scriptstyle\cdots\,$};
				\filldraw (1.125,.125) -- (.875,.125) -- (.875,-.125) -- (1.125,-.125) -- cycle;
				\filldraw (0,0) circle (.1);
				\filldraw (2,0) circle (.1);
				\filldraw (3,0) circle (.1);
				\filldraw (4,0) circle (.1);
				\filldraw (0,1) circle (.1);
				\filldraw (1,1) circle (.1);
				\filldraw (2,1) circle (.1);
				\filldraw (3,1) circle (.1);
				\filldraw (4,1) circle (.1);
			\end{tikzpicture}
		\end{center}
		\caption{In this case, the path starts in the same place, but there exists two point to the left which are considered to be ``part of the path'' for the purpose of nearest neighbor calculations. A GSAW on such a set-up will be called a GSAW on $\underline{L}^{\ast}_{\infty}$}
		\label{figure:one-sided-ladder-base-weight}
	\end{figure}
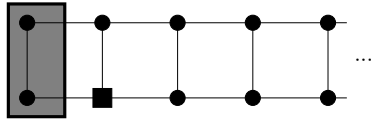
	
	\begin{lemma}
		Let $\underline{h}^{\ast}_{\ell,w}$ be the probability that a GSAW with nearest-neighbor weighing on $\underline{L}^{\ast}_{\infty}$ terminates as a hook path of length $\ell$ and width $w$.
		Then the bivariate ordinary generating function $\underline{H}^{\ast}(x,y)$ of $\underline{h}^{\ast}_{\ell,w}$ is
		\begin{displaymath}
			\underline{H}^{\ast}(x,y) = \frac{C^{2}x^{3}y}{2(1+C)(1+C^{2})} + \frac{Cx^{5}y^{2}}{2(1+C)^{2}(2-x^{2}y)}
		\end{displaymath}
	\end{lemma}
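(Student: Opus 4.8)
The plan is to mirror the earlier biased hook-path lemma on $L^{\ast}_{\infty}$, but to re-examine carefully how the two pre-occupied vertices sitting one column to the west of the start alter the step probabilities through the energy weighting $C^{\abs{N(w)\cap(P\setminus v_{n})}}$. Writing the start vertex in the bottom row with the column immediately to its west fully occupied, I would trace the walk along $H^{\ast}_{n}=E^{n}NW^{n}$ and record the energy of each competing candidate at every step. The essential observation is that the pre-occupied column only influences the walk near its two endpoints, and in particular forces the shortest hook $n=1$ to behave differently from every longer hook $n\ge 2$; this is exactly why the stated generating function is a sum of an isolated monomial term and a geometric tail.

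First I would treat $n=1$, the path $ENW$, on its own. The opening $E$ step competes against a north step whose target (directly above the start) is adjacent to the pre-occupied vertex to its west, so that target has energy $C$ while the eastward target has energy $1$; hence $P(E)=\tfrac{1}{1+C}$. The subsequent $N$ step sees two energy-$1$ options and so has probability $\tfrac12$. The terminating $W$ step lands on a vertex adjacent to \emph{both} the pre-occupied vertex to its west and the just-vacated start vertex directly below it, giving it energy $C^{2}$ against an energy-$1$ eastward alternative, so $P(W)=\tfrac{C^{2}}{1+C^{2}}$. Multiplying yields probability $\tfrac{C^{2}}{2(1+C)(1+C^{2})}$, contributing the term $\tfrac{C^{2}}{2(1+C)(1+C^{2})}\,x^{3}y$.

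Next I would handle $n\ge 2$ uniformly. The first $E$ step again has probability $\tfrac{1}{1+C}$ for the same reason as above, but now the remaining $n-1$ eastward steps and the single $N$ step each see symmetric energy-$1$ options and contribute $\tfrac12$ apiece. The first $W$ step now lands on a vertex whose only occupied neighbor is the bottom-row vertex directly below it, since once $n\ge 2$ that landing vertex is no longer adjacent to the pre-occupied column; this gives energy $C$ and probability $\tfrac{C}{1+C}$, after which the remaining $W$ steps are forced. The product is $\tfrac{C}{(1+C)^{2}2^{n}}$, and since $H^{\ast}_{n}$ has length $2n+1$ and width $n$, this hook contributes $\tfrac{C}{(1+C)^{2}2^{n}}x^{2n+1}y^{n}$.

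Finally I would assemble $\underline{H}^{\ast}(x,y)=\tfrac{C^{2}}{2(1+C)(1+C^{2})}x^{3}y+\sum_{n\ge 2}\tfrac{C}{(1+C)^{2}2^{n}}x^{2n+1}y^{n}$, reindex the tail by $n=k+2$, and recognize the geometric series $\tfrac{Cx^{5}y^{2}}{4(1+C)^{2}}\sum_{k\ge 0}\para{\tfrac{x^{2}y}{2}}^{k}=\tfrac{Cx^{5}y^{2}}{2(1+C)^{2}(2-x^{2}y)}$, producing the claimed closed form. The main obstacle is not the summation but the energy bookkeeping at the endpoints: one must notice that the $n=1$ hook has its terminal vertex touching two occupied sites (the pre-occupied column and the start column) whereas every longer hook touches only one, so that $n=1$ genuinely cannot be absorbed into the geometric pattern and must be split off.
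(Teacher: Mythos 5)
Your proposal is correct and takes essentially the same approach as the paper's proof: split off the $n=1$ hook, whose terminal $W$ step has probability $\tfrac{C^{2}}{1+C^{2}}$ because its landing vertex touches both the pre-occupied column and the start, assign every hook with $n\ge 2$ probability $\tfrac{C}{2^{n}(1+C)^{2}}$, and sum the geometric tail. Your energy bookkeeping at the endpoints is in fact spelled out more explicitly than in the paper, which asserts the same step probabilities (first step $\tfrac{1}{1+C}$, interior steps $\tfrac12$, final turn $\tfrac{C^{2}}{1+C^{2}}$ or $\tfrac{C}{1+C}$) without tracing the adjacencies vertex by vertex.
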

	
	\begin{proof}
		Since we are only considering hook paths that terminate, $\underline{H}^{\ast}_{0}$ is not considered.
		
		The first step has probability $1/(1+C)$ probability of happening.
		The next $\ell$ steps all have probability $1/2$ of happening since their neighbors are not adjacent to the path.
		However after moving $N$, the $E$ step has no neighbor in the path and the $W$ step has either one or two neighbors in the path.
		If $\ell = 3$, then this step has two neighbors and has probability $C^{2}/(1+C^{2})$ of happening, otherwise it has probability $C/(1+C)$ of happening.
		The rest of the steps are then determined.
		
		Thus
	\begin{align*}
			\underline{H}^{\ast}(x,y) & = \sumover{\ell=0}{\infty}{\sumover{w=0}{\infty}{\underline{h}^{\ast}_{\ell,w}x^{\ell}y^{w}}} = \sumover{w=0}{\infty}{\underline{h}^{\ast}_{2w+1,w}x^{2w+1}y^{w}} 
			 = \underline{h}^{\ast}_{3,1}x^{3}y + \sumover{w=2}{\infty}{\underline{h}^{\ast}_{2w+1,w}x^{2w+1}y^{w}} \\
			& = \frac{C^{2}x^{3}y}{2(1+C)(1+C^{2})} + \sumover{k=0}{\infty}{\underline{h}^{\ast}_{2k+1,k}x^{2k+5}y^{k+2}} \\
			& = \frac{C^{2}x^{3}y}{2(1+C)(1+C^{2})} + \sumover{k=0}{\infty}{\frac{C}{2^{k+2}(1+C)^{2}}x^{2k+5}y^{k+2}} \\
			& = \frac{C^{2}x^{3}y}{2(1+C)(1+C^{2})} + \frac{Cx^{5}y^{2}}{4(1+C)^{2}}\sumover{k=0}{\infty}{\left(\frac{x^{2}y}{2}\right)^{k}} \\
			& = \frac{C^{2}x^{3}y}{2(1+C)(1+C^{2})} + \frac{Cx^{5}y^{2}}{4(1+C)^{2}}\frac{1}{1-\frac{x^{2}y}{2}} \\
			& =\boxed{ \frac{C^{2}x^{3}y}{2(1+C)(1+C^{2})} + \frac{Cx^{5}y^{2}}{2(1+C)^{2}(2-x^{2}y)}}
		\end{align*}
	\end{proof}
	
	\begin{lemma}
		Let $\underline{t}_{\ell,w}$ be the probability that the first $\ell$ steps of a GSAW with nearest-neighbor weighing on $\underline{L}^{\ast}_{\infty}$ is a twist path of length $\ell$ and width $w$.
		Then the bivariate ordinary generating function $\underline{T}^{\ast}(x,y)$ of $\underline{t}_{\ell,w}$ is
		\begin{displaymath}
			\underline{T}^{\ast}(x,y) = \frac{Cx^{2}y}{C+1} + \frac{x^{3}y^{2}}{2(1+C)(1+C^{2})} + \frac{x^{4}y^{3}}{2(1+C)^{2}(2-xy)}.
		\end{displaymath}
	\end{lemma}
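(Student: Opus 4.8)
The plan is to evaluate the weight of each twist path $T^{\ast}_{n} = E^{n}NE$ directly from the nearest-neighbor rule and then collect the resulting monomials. As in the unbiased case, $T^{\ast}_{n}$ has length $n+2$ and width $n+1$, so it contributes a term proportional to $x^{n+2}y^{n+1}$; the entire content of the lemma is therefore the dependence of the probability weight on $n$.

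First I would trace the path step by step, recording for each move the energies of the competing admissible vertices. The defining feature of $\underline{L}^{\ast}_{\infty}$ (Figure~\ref{figure:one-sided-ladder-base-weight}) is the pair of phantom occupied sites immediately west of the start, which raise the energy of any vertex adjacent to them and thereby bias the first and last steps. This forces a split into three regimes. For $n=0$ the path is simply $NE$: the opening $N$ competes only against an $E$ move, but its target touches the upper phantom and so has energy $C$ against the $E$-target's energy $1$, giving weight $\tfrac{C}{1+C}$, after which the $E$ step is forced. This produces the term $\tfrac{Cx^{2}y}{C+1}$.

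For $n\ge 1$ the opening $E$ step carries weight $\tfrac{1}{1+C}$ (its rival $N$-vertex again meets the upper phantom), each interior $E$ and the single $N$ step carries weight $\tfrac{1}{2}$ since their targets see no occupied neighbor yet, and the closing $E$ step is the subtle one. Its competitor is a $W$ move whose target sits directly above an already-occupied bottom vertex, contributing at least one factor of $C$. The crux of the argument, and the reason $n=1$ must be isolated, is that when $n=1$ this $W$-target is \emph{also} adjacent to the upper phantom, so it has two occupied neighbors and energy $C^{2}$ rather than $C$. Hence the last $E$ step has weight $\tfrac{1}{1+C^{2}}$ when $n=1$ and $\tfrac{1}{1+C}$ when $n\ge 2$.

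Assembling the weights gives $P(T^{\ast}_{1}) = \tfrac{1}{2(1+C)(1+C^{2})}$, hence the term $\tfrac{x^{3}y^{2}}{2(1+C)(1+C^{2})}$, while for $n\ge 2$ one finds $P(T^{\ast}_{n}) = \tfrac{1}{(1+C)^{2}2^{n}}$. Summing this last family as a geometric series in $\tfrac{xy}{2}$ yields $\tfrac{x^{4}y^{3}}{2(1+C)^{2}(2-xy)}$, and adding the three pieces gives the stated $\underline{T}^{\ast}(x,y)$. I expect the only real pitfall to be the occupied-neighbor count at the final $E$ step---distinguishing the energy $C^{2}$ of the $n=1$ case from the energy $C$ of the generic case---so I would confirm the $n=1$ and $n=2$ weights against the definition before carrying out the summation.
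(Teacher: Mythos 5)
Your proof is correct and follows essentially the same route as the paper's: split into the cases $n=0$, $n=1$, and $n\ge 2$, compute the step weights directly from the energy rule (with the factor $1/(1+C^{2})$ arising exactly where you place it, from the $W$-target of the final step having both the phantom column and the occupied bottom-row vertex as neighbors), and sum the $n\ge 2$ family as a geometric series in $xy/2$. Your write-up is in fact somewhat more explicit about the three-regime case split than the paper's prose, but the decomposition and the resulting terms are identical.
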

	
	\begin{proof}
		We observe that the minimum length of a twist path is 2, and that for any twist path $\ell = 1 + w$.
		
		If the twist path is of length two, then it must first move $N$ with probability $C/(C+1)$ and then it has no other options but to move $E$.
		
		Otherwise it first makes an $E$ step with probability $1/(C+1)$.
		From here there are two options, it moves $N$ with probability 1/2, but then must move $E$ with probability $1/(1+C^{2})$ since the vertex west of it is adjacent to the path and the floor.
		This is the only twist path of length two.
		
		If this doesn't happen, then it continues to move east at which point when it does move north there is only one vertex adjacent in the backwards direction.
		Thus the final $E$ step will happen with probability $1/(C+1)$ and continues to make $E$ steps with  the same probability, until it takes a $N$ step with probability $1/2$.
		Since there now exist points to the left, the probability of the last $E$ step is $1/(1+C)$.
		Thus
		\begin{align*}
			\underline{T}^{\ast}(x,y) & = \sumover{\ell=0}{\infty}{\sumover{w=0}{\infty}{\underline{t}^{\ast}_{\ell,w}x^{\ell}y^{w}}} 
			        = \sumover{\ell=2}{\infty}{\underline{t}^{\ast}_{\ell,\ell-1}x^{\ell}y^{\ell-1}} 
			        = \underline{t}^{\ast}_{2,1}x^{2}y + \underline{t}^{\ast}_{3,2}x^{3}y^{2} + \sumover{\ell=3}{\infty}{\underline{t}_{\ell,\ell-1}x^{\ell}y^{\ell-1}} \\
			       & = \frac{Cx^{2}y}{C+1} + \frac{x^{3}y^{2}}{2(C+1)(C^{2}+1)} + \sumover{k=0}{\infty}{\frac{x^{k+3}y^{k+2}}{2^{k+1}(C+1)^{2}}} \\
			       & = \frac{Cx^{2}y}{C+1} + \frac{x^{3}y^{2}}{2(C+1)(C^{2}+1)} + \frac{x^{3}y^{2}}{2}\sumover{k=0}{\infty}{\left(\frac{xy}{2}\right)^{k}} \\
			       & = \frac{Cx^{2}y}{C+1} + \frac{x^{3}y^{2}}{2(C+1)(C^{2}+1)} + \frac{x^{3}y^{2}}{2}\frac{1}{1-\frac{xy}{2}} \\
			       & =\boxed{ \frac{x^{3}y^{2}}{2(1+C)(1+C^{2})} + \frac{x^{4}y^{3}}{2(1+C)^{2}(2-xy)}}.
		\end{align*}
	\end{proof}

\subsubsection{The One Sided Case}

	\begin{theorem}
		Let $\underline{p}^{\ast}_{\ell,w}$ be the probability that a GSAW with nearest-neighbor weighing on $\underline{L}^{\ast}_{\infty}$ terminates having taking $\ell$ steps and a width of $w$.	
		Then the bivariate ordinary generating function $\underline{L}^{\ast}_{\infty}(x,y)$ of $\underline{p}^{\ast}_{\ell,w}$ is
		\begin{displaymath}
			\underline{L}^{\ast}_{\infty}(x,y) = \frac{\underline{H}^{\ast}(x,y)}{1-\underline{T}^{\ast}(x,y)}.
		\end{displaymath}
	\end{theorem}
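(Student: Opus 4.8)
The plan is to reprise, now with the nearest-neighbor weights carried along, the self-similar decomposition that proved the unbiased one-sided result $L^{\ast}_{\infty}(x,y) = H^{\ast}(x,y)/(1-T^{\ast}(x,y))$. First I would argue that a GSAW on $\underline{L}^{\ast}_{\infty}$ makes a north step with probability $1$: the only way to avoid one is an infinite run of $E$ steps, an event of probability $0$. Once the walk takes its first north step, the step immediately after it is forced to be either a $W$ step, which completes an underlined hook path and terminates the walk, or an $E$ step, which completes an underlined twist path and forces the walk to continue.

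The heart of the argument, and the step I expect to be the main obstacle, is verifying that the recursion closes on $\underline{L}^{\ast}_{\infty}$ itself rather than on $L^{\ast}_{\infty}$. In the unbiased case no weight memory is carried, so after a twist the walk simply restarts on $L^{\ast}_{\infty}$; here the sites left occupied by the twist alter the energies of the subsequent candidate steps, so one must check that the post-twist environment is exactly the base-weighted one. I would do this geometrically: a twist path ends with the walk having just stepped $E$ into a fresh column, while the entire column immediately to its west is occupied, both its top and bottom vertex lying on the path. These are precisely the two pre-occupied western cells defining $\underline{L}^{\ast}_{\infty}$ in Figure~\ref{figure:one-sided-ladder-base-weight} (up to the horizontal translation and the top--bottom reflection that leaves $\underline{L}^{\ast}_{\infty}$ invariant). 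Hence the remainder of the walk after a twist is distributed exactly as a fresh weighted GSAW on $\underline{L}^{\ast}_{\infty}$, and the decomposition is genuinely self-similar.

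With this established, every terminating GSAW on $\underline{L}^{\ast}_{\infty}$ is either a terminating underlined hook path, or an underlined twist path followed by an independent terminating GSAW on $\underline{L}^{\ast}_{\infty}$. Conditioning on this dichotomy and convolving the length and width contributions yields
\begin{displaymath}
    \underline{p}^{\ast}_{\ell,w} = \sum_{i=0}^{\ell}\sum_{j=0}^{\infty} \underline{t}^{\ast}_{i,j}\,\underline{p}^{\ast}_{\ell-i,\,w-j} + \underline{h}^{\ast}_{\ell,w},
\end{displaymath}
the product form of the twist term being justified by the independence just argued. Passing to bivariate generating functions converts the convolution into a product, giving $\underline{L}^{\ast}_{\infty}(x,y) = \underline{T}^{\ast}(x,y)\,\underline{L}^{\ast}_{\infty}(x,y) + \underline{H}^{\ast}(x,y)$, and solving for $\underline{L}^{\ast}_{\infty}(x,y)$ produces the claimed $\underline{H}^{\ast}(x,y)/(1-\underline{T}^{\ast}(x,y))$. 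Since $\underline{H}^{\ast}$ and $\underline{T}^{\ast}$ were already obtained in the preceding lemmas, no computation beyond this algebraic rearrangement is required.
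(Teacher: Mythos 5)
Your proposal is correct and follows essentially the same route as the paper: probability-one occurrence of a north step, the hook/twist dichotomy, the self-similar restart on $\underline{L}^{\ast}_{\infty}$ after a twist, and solving the resulting functional equation $\underline{L}^{\ast}_{\infty} = \underline{T}^{\ast}\,\underline{L}^{\ast}_{\infty} + \underline{H}^{\ast}$. Your explicit geometric check that the post-twist configuration (fully occupied column just west of the new head, up to a top--bottom reflection) reproduces the base-weighted setup of $\underline{L}^{\ast}_{\infty}$ is exactly the point the paper asserts without elaboration, so you have merely made its key claim more rigorous rather than changed the argument.
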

	
	\begin{proof}
		We first consider an arbitrary GSAW with nearest-neighbor weighing on $\underline{L}^{\ast}_{\infty}$.
		We observe that with probability 1 the GSAW makes a $N$ step at some point since the only way to avoid such a step is an infinite series of $E$ steps which has probability 0 of happening.
		After the GSAW makes the $N$ step it must either take an $E$ step or $W$ step.
		If it takes a $W$ step then it forms a hook path and terminates.
		It is takes an $E$ step then it forms a twist path and must continue.
		However, at the end of the twist path, the remaining spaces reachable from the current position is equivalent to starting a new GSAW with nearest-neighbor weighing on $\underline{L}^{\ast}_{\infty}$.
		
		Thus every GSAW with nearest-neighbor weighing on $\underline{L}^{\ast}_{\infty}$ is either a hook path or a twist path followed by another GSAW with nearest-neighbor weighing on $\underline{L}^{\ast}_{\infty}$.

		Thus
		\begin{displaymath}
			\underline{L}^{\ast}_{\infty}(x,y) = \underline{T}^{\ast}(x,y)\underline{L}^{\ast}_{\infty}(x,y) + \underline{H}^{\ast}(x,y).
		\end{displaymath}
		Solving for $\underline{L}^{\ast}_{\infty}(x,y)$ we get
		\begin{displaymath}
			\underline{L}^{\ast}_{\infty}(x,y) = \frac{\underline{H}^{\ast}(x,y)}{1-\underline{T}^{\ast}(x,y)}.
		\end{displaymath}
	\end{proof}
	
	\begin{theorem}
		Let $p^{\ast}_{\ell,w}$ be the probability that a GSAW with nearest-neighbor weighing on $L^{\ast}_{\infty}$ terminates having taking $\ell$ steps and a width of $w$.	
		Then the bivariate ordinary generating function $L^{\ast}_{\infty}(x,y)$ of $p^{\ast}_{\ell,w}$ is
		\begin{displaymath}
			L^{\ast}_{\infty}(x,y) = T^{\ast}(x,y)\underline{L}^{\ast}_{\infty}(x,y) + H^{\ast}(x,y).
		\end{displaymath}
	\end{theorem}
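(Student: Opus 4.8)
The plan is to reproduce the hook-or-twist decomposition that established the unbiased one-sided result (the theorem giving $L^{\ast}_{\infty} = H^{\ast}/(1-T^{\ast})$), but to track carefully where the nearest-neighbor weighting forces a switch from the bare ladder to the base-weighted ladder $\underline{L}^{\ast}_{\infty}$. The essential new feature is that the \emph{first} segment of the walk sees no occupied sites to its west, whereas everything produced after the first twist does; this asymmetry is exactly what converts the self-referential equation of the unbiased case into a product against $\underline{L}^{\ast}_{\infty}(x,y)$.

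First I would argue, exactly as before, that with probability $1$ a GSAW with nearest-neighbor weighting on $L^{\ast}_{\infty}$ eventually takes an $N$ step, since the only alternative is an infinite run of $E$ steps, an event of probability $0$. The first $N$ step closes off an initial segment which is forced to be either a hook (the following step is $W$, and the walk terminates) or a twist (the following step is $E$, and the walk continues). Because the walk begins at the edge of $L^{\ast}_{\infty}$ with no occupied sites in the column to its west, the energy of the closing $W$ step of a hook, and of the final $E$ step of a twist, is computed with only the single neighbor coming from the column just traversed. Hence the initial hook contributes exactly $H^{\ast}(x,y)$ and the initial twist contributes exactly $T^{\ast}(x,y)$, the non-underlined generating functions already computed, rather than $\underline{H}^{\ast}$ or $\underline{T}^{\ast}$.

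The key step is to identify what remains after an initial twist. A twist path $T^{\ast}_{n} = E\cdots E\,N\,E$ leaves the walker at a vertex both of whose western neighbors lie on the path: the bottom site of the column immediately to the west was occupied during the eastward run, and the top site was occupied by the $N$ step. This is precisely the configuration defining $\underline{L}^{\ast}_{\infty}$ in Figure~\ref{figure:one-sided-ladder-base-weight}, namely a bare ladder together with two occupied sites to the left that are counted for nearest-neighbor energies. Therefore the remainder of the walk, after the first twist, is distributed as a GSAW with nearest-neighbor weighting on $\underline{L}^{\ast}_{\infty}$, whose generating function $\underline{L}^{\ast}_{\infty}(x,y)$ was determined in the preceding theorem. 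Since length and width add under concatenation of the twist with the remainder, the twist contribution factors, and summing the two mutually exclusive initial outcomes gives
\begin{displaymath}
    L^{\ast}_{\infty}(x,y) = T^{\ast}(x,y)\,\underline{L}^{\ast}_{\infty}(x,y) + H^{\ast}(x,y),
\end{displaymath}
as claimed.

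The main obstacle I anticipate is justifying the asymmetry between the first segment and the remainder cleanly: one must verify that the starting column of $L^{\ast}_{\infty}$ genuinely contributes no western neighbors, so the initial segment is governed by $H^{\ast}$ and $T^{\ast}$, while the vertex reached after the first twist acquires \emph{exactly} two occupied western neighbors, so every subsequent step is governed by $\underline{L}^{\ast}_{\infty}$. Checking that the count is exactly two, and that no other sites of the already-traced twist interfere with the energies used in the $\underline{H}^{\ast}$ and $\underline{T}^{\ast}$ lemmas, is the delicate piece of bookkeeping; once it is settled, the functional equation and the stated product formula follow immediately.
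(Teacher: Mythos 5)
Your proposal is correct and follows essentially the same argument as the paper: decompose the walk into an initial hook (terminating, contributing $H^{\ast}(x,y)$) or an initial twist followed by a GSAW on $\underline{L}^{\ast}_{\infty}$ (contributing $T^{\ast}(x,y)\underline{L}^{\ast}_{\infty}(x,y)$). The paper's own proof is terser, simply invoking ``similar logic as before,'' whereas you explicitly verify the bookkeeping that the first segment sees no occupied western sites while the post-twist remainder sees exactly the two occupied sites defining $\underline{L}^{\ast}_{\infty}$ --- a detail the paper leaves implicit.
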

	
	\begin{proof}
		We consider an arbitrary GSAW with nearest-neighbor weighing on $L^{\ast}_{\infty}$.
		Use similar logic as before we have that such a GSAW either makes a hook path and terminates or makes a twist path and then the rest is a GSAW with nearest-neighbor weighing on $\underline{L}^{\ast}_{\infty}$.
		
		Thus
		\begin{displaymath}
			L^{\ast}_{\infty}(x,y) = T^{\ast}(x,y)\underline{L}^{\ast}_{\infty}(x,y) + H^{\ast}(x,y).
		\end{displaymath}
	\end{proof}
	
	\begin{corollary}
		The probability that a GSAW with nearest-neighbor weighing on $L^{\ast}_{\infty}$ or $\underline{L}^{\ast}_{\infty}$ terminates is 1.
	\end{corollary}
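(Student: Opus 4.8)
The plan is to imitate the two termination corollaries already proved for the unbiased ladders. The probability that a weighted GSAW terminates is the total mass $\sum_{\ell,w}\underline{p}^{\ast}_{\ell,w}$ (respectively $\sum_{\ell,w}p^{\ast}_{\ell,w}$), and since all coefficients are nonnegative this is exactly the corresponding generating function evaluated at $(x,y)=(1,1)$. Hence it suffices to show $\underline{L}^{\ast}_{\infty}(1,1)=1$ and $L^{\ast}_{\infty}(1,1)=1$.

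First I would avoid expanding the full rational closed form and instead feed $(1,1)$ directly into the two functional equations from the preceding theorems, namely $\underline{L}^{\ast}_{\infty}=\underline{H}^{\ast}/(1-\underline{T}^{\ast})$ and $L^{\ast}_{\infty}=T^{\ast}\,\underline{L}^{\ast}_{\infty}+H^{\ast}$. The crux is a pair of \emph{conservation of probability} identities at the point $(1,1)$:
\begin{displaymath}
\underline{H}^{\ast}(1,1)+\underline{T}^{\ast}(1,1)=1 \qquad\text{and}\qquad H^{\ast}(1,1)+T^{\ast}(1,1)=1.
\end{displaymath}
Conceptually these hold because once the walk takes its (almost sure) north step, it must either hook back---terminating---or twist and continue, and these outcomes are exhaustive and mutually exclusive, so their probabilities sum to one.

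Granting the first identity, $\underline{L}^{\ast}_{\infty}(1,1)=\underline{H}^{\ast}(1,1)/\bigl(1-\underline{T}^{\ast}(1,1)\bigr)=\underline{H}^{\ast}(1,1)/\underline{H}^{\ast}(1,1)=1$. Substituting this into the second functional equation and using the second identity gives $L^{\ast}_{\infty}(1,1)=T^{\ast}(1,1)\cdot 1+H^{\ast}(1,1)=1$, which is the claim. The identity $H^{\ast}(1,1)+T^{\ast}(1,1)=1$ is immediate from the two precursor lemmas, since $H^{\ast}(1,1)=\frac{C}{2(1+C)}$ and $T^{\ast}(1,1)=\frac12+\frac{1}{2(1+C)}$ add to $1$.

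The main obstacle is verifying $\underline{H}^{\ast}(1,1)+\underline{T}^{\ast}(1,1)=1$ and justifying the boundary evaluation. For the former I would use the complete three-term expression for $\underline{T}^{\ast}$ stated in its lemma (including the length-two term $\tfrac{Cx^{2}y}{C+1}$, which the final boxed line appears to drop); the terms then telescope through the cancellations of $C^{2}+1$ against $(1+C)(1+C^{2})$ and of $C+1$ against $(1+C)^{2}$, leaving $\tfrac{1}{1+C}+\tfrac{C}{1+C}=1$. For the latter, since every coefficient is a probability and the partial sums are bounded by $1$, the value at $(1,1)$ is a monotone limit equal to the true termination mass; because $\underline{T}^{\ast}(1,1)<1$ strictly for $C>0$, the geometric resummation $\underline{H}^{\ast}/(1-\underline{T}^{\ast})$ remains valid at the boundary and no separate convergence argument is needed beyond this observation.
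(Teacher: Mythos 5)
Your proof is correct and takes essentially the same approach the paper uses for its analogous unbiased corollaries (the paper actually states this particular corollary with no proof at all): the termination probability is the coefficient sum, i.e.\ the generating function evaluated at $(1,1)$, and it equals $1$ because of the conservation identities $H^{\ast}(1,1)+T^{\ast}(1,1)=1$ and $\underline{H}^{\ast}(1,1)+\underline{T}^{\ast}(1,1)=1$, which give $\underline{L}^{\ast}_{\infty}(1,1)=\underline{H}^{\ast}(1,1)/\bigl(1-\underline{T}^{\ast}(1,1)\bigr)=1$ and then $L^{\ast}_{\infty}(1,1)=T^{\ast}(1,1)+H^{\ast}(1,1)=1$. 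Both identities check out against the lemma statements, and you are right on the one subtle point: the second identity requires the full three-term expression for $\underline{T}^{\ast}$ given in the lemma (including $\frac{Cx^{2}y}{C+1}$), not the boxed final line of that lemma's proof, which erroneously drops this term; your remark that $\underline{T}^{\ast}(1,1)<1$ for $C>0$ is also exactly what is needed to make the evaluation of the quotient at the boundary legitimate.
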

	
	\begin{corollary}
		The expected number of steps taken before a GSAW with nearest-neighbor weighing on $L^{\ast}_{\infty}$ terminates is
		\begin{displaymath}
		    \frac{16+42 C+51 C^2+59 C^3+36 C^4+4 C^5}{2 C+4 C^2+6 C^3+4 C^4}
		\end{displaymath}
		with variance
		\begin{center}
		    \begin{displaymath}\Scale[1.2]{
			    \frac{256 + 1104 C + 2488 C^2 + 3998 C^3 + 4981 C^4 + 4864 C^5 + 3769 C^6 + 2292 C^7 + 1000 C^8 + 288 C^9 + 
 48 C^{10}}{4 C^2 (1 + C)^2 (1 + C + 2 C^2)^2}}.
		    \end{displaymath}
		\end{center}
		The expected width is
		\begin{displaymath}
		    \frac{C^4+7 C^3+8 C^2+6 C+6}{2 C^3+C^2+C}
		\end{displaymath}
		with variance
		\begin{displaymath}
			\frac{(C+1) \left(3 C^7+9 C^6+33 C^5+59 C^4+62 C^3+84 C^2+34 C+36\right)}{C^2 \left(2 C^2+C+1\right)^2}.
		\end{displaymath}
	\end{corollary}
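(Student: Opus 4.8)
The plan is to follow the same moment-extraction recipe used in the earlier corollaries, now applied to the biased generating function assembled in the two theorems immediately preceding this statement. By those theorems the full bivariate generating function factors as
\begin{displaymath}
L^{\ast}_{\infty}(x,y) = T^{\ast}(x,y)\,\underline{L}^{\ast}_{\infty}(x,y) + H^{\ast}(x,y),
\qquad
\underline{L}^{\ast}_{\infty}(x,y) = \frac{\underline{H}^{\ast}(x,y)}{1-\underline{T}^{\ast}(x,y)},
\end{displaymath}
and the four precursor functions $H^{\ast}, T^{\ast}, \underline{H}^{\ast}, \underline{T}^{\ast}$ were each given explicit closed rational forms in $x$, $y$, and the parameter $C$ in the lemmas of this subsection. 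Substituting these into the two identities above produces a single explicit rational expression for $L^{\ast}_{\infty}(x,y)$. Because the preceding corollary establishes that the walk terminates with probability $1$, i.e. $L^{\ast}_{\infty}(1,1)=1$, the coefficients $p^{\ast}_{\ell,w}$ genuinely form a probability distribution, so the derivatives below compute honest moments rather than conditional ones.

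First I would form the length marginal $L^{\ast}_{\infty}(x,1)$ and the width marginal $L^{\ast}_{\infty}(1,y)$ by setting the unused variable to $1$. The mean length is $\left.\partial_x L^{\ast}_{\infty}(x,1)\right|_{x=1}$ and the mean width is $\left.\partial_y L^{\ast}_{\infty}(1,y)\right|_{y=1}$, exactly as in the unbiased corollaries. For the variances I would use the factorial-moment identity
\begin{displaymath}
\mathbb{V}(\ell) = \left.\frac{\partial^{2}}{\partial x^{2}}L^{\ast}_{\infty}(x,1)\right|_{x=1}
+ \left.\frac{\partial}{\partial x}L^{\ast}_{\infty}(x,1)\right|_{x=1}
- \left(\left.\frac{\partial}{\partial x}L^{\ast}_{\infty}(x,1)\right|_{x=1}\right)^{2},
\end{displaymath}
together with the analogous formula in $y$ for $\mathbb{V}(w)$. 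Collecting over a common denominator and simplifying then yields the stated rational functions of $C$. A useful consistency check is the specialization $C=1$, where the bias disappears: the mean-length formula gives $208/16 = 13$ and the mean-width formula gives $28/4 = 7$, recovering the unbiased values computed earlier for $L^{\ast}_{\infty}$.

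The routine but delicate part is the algebra. Differentiating a quotient of the shape $T^{\ast}\underline{H}^{\ast}/(1-\underline{T}^{\ast}) + H^{\ast}$ twice while retaining the full dependence on $C$ causes the numerators to grow to degree ten in $C$ before cancellation, and I expect the main obstacle to be carrying out this expansion carefully enough to reach the reduced forms shown. The key structural feature to exploit is that the factor $1-\underline{T}^{\ast}$ sits in the denominator of $\underline{L}^{\ast}_{\infty}$; one computes $1-\underline{T}^{\ast}(1,1) = C(1+C+2C^{2})/\bigl(2(1+C)^{2}(1+C^{2})\bigr)$, so the polynomial $1+C+2C^{2}$ ($=2C^{2}+C+1$) enters the mean to the first power and, because each extra differentiation of $1/(1-\underline{T}^{\ast})$ raises its power, enters both variances to the second power, matching the denominators $4C^{2}(1+C)^{2}(1+C+2C^{2})^{2}$ and $C^{2}(2C^{2}+C+1)^{2}$. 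Since every differentiation and substitution is a mechanical operation on explicit rational functions, the computation is most safely verified with a computer algebra system, and the output is the displayed expressions.
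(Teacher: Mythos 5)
Your proposal is correct and follows essentially the same route the paper takes: the paper states this corollary without an explicit proof, but it is plainly obtained exactly as in the unbiased case, by substituting the biased precursor lemmas into $L^{\ast}_{\infty}(x,y) = T^{\ast}(x,y)\underline{L}^{\ast}_{\infty}(x,y) + H^{\ast}(x,y)$ with $\underline{L}^{\ast}_{\infty} = \underline{H}^{\ast}/(1-\underline{T}^{\ast})$ and applying the first- and second-derivative moment formulas at $x=1$ (resp.\ $y=1$). Your structural observation that $1-\underline{T}^{\ast}(1,1) = C(1+C+2C^{2})/\bigl(2(1+C)^{2}(1+C^{2})\bigr)$ explains the denominators, and your $C=1$ consistency checks (mean $13$, variance $98$, width $7$, width variance $40$) confirm agreement with the unbiased results.
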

	
	\begin{corollary}
		The expected number of steps taken before a GSAW with nearest-neighbor weighing on $\underline{L}^{\ast}_{\infty}$ terminates is
		\begin{displaymath}
		    \frac{4 C^4+14 C^3+12 C^2+14 C+8}{2 C^3+C^2+C}
		\end{displaymath} with variance
		\begin{displaymath}
			\frac{2 \left(8 C^8+24 C^7+72 C^6+119 C^5+159 C^4+169 C^3+127 C^2+74 C+32\right)}{C^2 \left(2 C^2+C+1\right)^2}.
		\end{displaymath}
		The expected width is
		\begin{displaymath}
		    \frac{2 C^4+6 C^3+7 C^2+7 C+6}{2 C^3+C^2+C}
		\end{displaymath}
		with variance
		\begin{displaymath}
			\frac{2 \left(2 C^8+6 C^7+21 C^6+45 C^5+60 C^4+74 C^3+59 C^2+35 C+18\right)}{C^2 \left(2 C^2+C+1\right)^2}.
		\end{displaymath}
	\end{corollary}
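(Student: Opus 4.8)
The plan is to proceed exactly as in the analogous unbiased corollaries, extracting moments from the bivariate generating function by differentiation. The starting point is the closed form $\underline{L}^{\ast}_{\infty}(x,y) = \underline{H}^{\ast}(x,y)/(1-\underline{T}^{\ast}(x,y))$ established in the preceding theorem, together with the explicit expressions for $\underline{H}^{\ast}(x,y)$ and $\underline{T}^{\ast}(x,y)$ computed in the precursor lemmas. Since every coefficient $\underline{p}^{\ast}_{\ell,w}$ is the probability of a terminating configuration of length $\ell$ and width $w$, the relevant marginal generating functions are $\underline{L}^{\ast}_{\infty}(x,1)$ (tracking length) and $\underline{L}^{\ast}_{\infty}(1,y)$ (tracking width). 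As a consistency check I would first verify $\underline{L}^{\ast}_{\infty}(1,1)=1$, which is precisely the termination-probability corollary proved just above and guarantees these are genuine probability generating functions so that the moment formulas apply.

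Next I would compute the expected length as the first derivative of the length marginal,
\begin{displaymath}
	\mathbb{E}(\ell) = \left.\left(\frac{\partial}{\partial x}\underline{L}^{\ast}_{\infty}(x,1)\right)\right|_{x=1},
\end{displaymath}
and the variance via the standard factorial-moment identity used in the earlier corollaries,
\begin{displaymath}
	\mathbb{V}(\ell) = \left.\frac{\partial^{2}}{\partial x^{2}}\underline{L}^{\ast}_{\infty}(x,1)\right|_{x=1} + \left.\frac{\partial}{\partial x}\underline{L}^{\ast}_{\infty}(x,1)\right|_{x=1} - \left(\left.\frac{\partial}{\partial x}\underline{L}^{\ast}_{\infty}(x,1)\right|_{x=1}\right)^{2}.
\end{displaymath}
The width mean and variance follow identically after replacing $x$-derivatives at $x=1$ by $y$-derivatives at $y=1$ applied to $\underline{L}^{\ast}_{\infty}(1,y)$. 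In each case the derivative of the quotient $\underline{H}^{\ast}/(1-\underline{T}^{\ast})$ is handled by the quotient rule, so that I only ever need the first two $x$- (resp.\ $y$-) derivatives of the two precursor functions, all of which are elementary rational expressions in $C$.

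The one genuine obstacle is algebraic bookkeeping rather than conceptual difficulty: the precursor functions carry denominators such as $(1+C)$, $(1+C^{2})$, $2-x^{2}y$, and $2-xy$, so after differentiating twice, clearing denominators, and substituting $x=1$ or $y=1$, the intermediate expressions become high-degree polynomials in $C$. Arriving at the stated reduced forms with common denominator $C^{2}(2C^{2}+C+1)^{2}$ requires carefully tracking the factor $(2-x^{2}y)$ (respectively $(2-xy)$), which evaluates to $2-1=1$ only in the length case but contributes nontrivially to the width derivatives, and then confirming that the numerators collapse to the claimed octic polynomials in $C$. I would organize the computation so that the common factor $(1+C)$ in the denominators is pulled out early, leaving a single rational function whose numerator and denominator I differentiate symbolically; this keeps the degree of the polynomials manageable and makes the final simplification to the displayed fractions a matter of polynomial factoring that is best verified with a computer algebra system.
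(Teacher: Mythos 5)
Your proposal is correct and follows exactly the method the paper uses (the paper states this corollary without an explicit proof, but its unbiased analogue is proved precisely this way: evaluate $\partial_x \underline{L}^{\ast}_{\infty}(x,1)$ and $\partial_y \underline{L}^{\ast}_{\infty}(1,y)$ at $1$, then apply $\mathbb{V}=f''(1)+f'(1)-f'(1)^2$, after checking $\underline{L}^{\ast}_{\infty}(1,1)=1$). One small inaccuracy in your bookkeeping remark: the factors $2-x^{2}y$ and $2-xy$ reduce to $2-x^{2}$, $2-x$ (resp.\ $2-y$) after setting the other variable to $1$, so they contribute nontrivially to the derivatives in \emph{both} the length and the width computations, not just the width one; this does not affect the validity of your argument.
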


\subsubsection{The Double Sided Case}

	\begin{theorem}
		Let $H(x,y)$ be the bivariate ordinary generating function for paths for which the first segment is a hook path on $L_{\infty}$.
		Then
		\begin{displaymath}
			H(x,y) = \frac{x^{2}y}{3} + \frac{2Cx^{4}y^{2}}{3(1+C)(2-x^{2}y)}
		\end{displaymath}
	\end{theorem}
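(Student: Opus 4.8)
The plan is to follow the template of the unbiased double-sided hook lemma, inserting the self-attraction weights $C^{k}$ only at the one step where the walk first becomes adjacent to an already-occupied site. By the symmetry of $L_{\infty}$ I would fix the start at $(0,0)$ and observe that at the first step all three candidate neighbours have no occupied neighbour other than the current vertex, so each has energy $C^{0}=1$ and is selected with probability $1/3$. The horizontal reflection $E\leftrightarrow W$ then lets me fold the two horizontal starts together and treat ``the first step is horizontal'' as a single representative event of probability $2/3$, exactly as in the unbiased proof of $T(x,y)$ and $H(x,y)$.

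Next I would classify the hook first-segments by order. A hook of order $0$ is a vertical first step $N$ (probability $1/3$) followed by a forced horizontal step; its two mirror-image continuations each have length $2$ and width $1$, and together they contribute the term $\tfrac{1}{3}x^{2}y$. For $n\ge 1$ the hook $H_{n}=E^{n}NW^{n+1}$ begins with an eastward run: the initial $E$ (folded probability $2/3$), then $n-1$ further $E$ steps and the apex $N$ step. At each of these the walk chooses between two candidates of equal energy $C^{0}=1$, so each step carries probability $1/2$, giving a factor $(1/2)^{n}$ for the outward run up to the apex $(n,1)$.

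The one genuinely biased step is the turn at the apex $(n,1)$, and this is the step I would verify most carefully. Here the westward candidate $(n-1,1)$ has an occupied neighbour directly below it (the bottom row being already filled), so its energy is $C^{1}=C$, while the eastward candidate $(n+1,1)$ still has energy $1$; hence the walk hooks back rather than twisting forward with probability $\tfrac{C}{1+C}$. After this first $W$ step every remaining return step has a unique unoccupied neighbour and is therefore forced with probability $1$, so no additional weights enter. Collecting the factors yields $P(H_{n})=\tfrac{2}{3}\cdot\tfrac{1}{2^{n}}\cdot\tfrac{C}{1+C}$ for a segment of length $2n+2$ and width $n+1$.

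Finally I would assemble $H(x,y)=\sum_{n\ge 0}P(H_{n})\,x^{2n+2}y^{n+1}$, pulling out the $n=0$ term $\tfrac{x^{2}y}{3}$ and summing the remaining geometric series in $r=x^{2}y/2$, namely $\tfrac{2C}{3(1+C)}x^{2}y\sum_{n\ge 1}(x^{2}y/2)^{n}$, which collapses to $\tfrac{2Cx^{4}y^{2}}{3(1+C)(2-x^{2}y)}$ and gives the claimed formula. The main obstacle is purely bookkeeping: ensuring that the symmetry folding produces the correct leading coefficients ($1/3$ for the order-$0$ hook, $2/3$ for the horizontal starts) and that the single $C$-weighted turn is the only place a weight $\neq 1/2$ appears, with all return steps forced; once this is pinned down the summation is identical to the unbiased case.
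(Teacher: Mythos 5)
Your proof is correct and is essentially the derivation the paper intends: the paper states this theorem without proof, and your argument is exactly the paper's unbiased double-sided hook argument (fold by reflection symmetry, probability $1/3$ for the order-$0$ hook, $\tfrac{2}{3}\cdot(1/2)^{n}$ for the run out to the apex) combined with the single $\tfrac{C}{1+C}$ turn weight identified in the paper's biased one-sided hook lemma, after which all return steps are forced. Your bookkeeping checks out --- summing $\tfrac{1}{3}x^{2}y+\sum_{n\ge 1}\tfrac{2C}{3(1+C)2^{n}}x^{2n+2}y^{n+1}$ gives the stated formula, and setting $C=1$ recovers the unbiased $H(x,y)=\tfrac{2x^{2}y}{6-3x^{2}y}$ as a consistency check.
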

	
	\begin{theorem}
		Let $T(x,y)$ be the bivariate ordinary generating function for paths for which the first segment is a twist path on $L_{\infty}$.
		Then
		\begin{displaymath}
			T(x,y) = \frac{2x^{3}y^{2}}{3(1+C)(2-xy)}
		\end{displaymath}
	\end{theorem}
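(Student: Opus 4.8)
The plan is to compute the probability that a nearest-neighbor self-attraction GSAW on $L_{\infty}$ begins with the twist path $T_{n} = E^{n+1}NE$ for each $n \ge 0$, and then to sum the resulting bivariate generating series. As in the unbiased double-sided case, I would first invoke the symmetry of $L_{\infty}$: every starting vertex is equivalent, and the left--right reflection identifies the eastward and westward twists. At the starting vertex the path is a single point, so all three neighbors have energy $C^{0} = 1$ and each initial step has probability $1/3$; thus a horizontal first step occurs with probability $2/3$, and since the two horizontal orientations are mirror images with identical continuation probabilities, computing the eastward case with this $2/3$ prefactor accounts for both.

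The central observation is that every step of $T_{n}$ prior to the final one has probability $1/2$. Along a straight run in one rail, the two available neighbors (the next vertex in the rail and the rung partner) each have no path-neighbor other than the current vertex $v_{n}$, so both carry energy $1$; the same holds for the rung (``$N$'') step itself. I would verify this by the same local energy computation used in the precursor lemmas: for a vertex entered by a straight move, the only path vertex adjacent to either candidate is $v_{n} \notin P \setminus v_{n}$, so each candidate has energy $C^{0}=1$ and the step probability is $1/2$.

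The one step that feels the bias is the final $E$ step, taken immediately after crossing the rung. After the rung step the walk sits on the opposite rail directly beneath (or above) the rail it just occupied, and its two horizontal candidates are no longer symmetric: the backward (west) candidate is adjacent to the rail vertex the walk occupied one step earlier, which lies in $P \setminus v_{n}$, so it has energy $C$, whereas the forward (east) candidate borders only fresh territory and has energy $1$. Consequently the terminal $E$ step has probability $\frac{1}{1+C}$ rather than $1/2$. Collecting factors gives $P(T_{n}) = \frac{2}{3}\cdot\left(\frac{1}{2}\right)^{n}\cdot\frac{1}{2}\cdot\frac{1}{1+C} = \frac{1}{3\cdot 2^{n}(1+C)}$; since $T_{n}$ has length $n+3$ and width $n+2$, summing the geometric series $\sum_{n\ge 0} P(T_{n})\,x^{n+3}y^{n+2}$ in the variable $xy/2$ yields $\frac{2x^{3}y^{2}}{3(1+C)(2-xy)}$, the claimed form.

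The main obstacle is the energy bookkeeping at the turning point: one must confirm that exactly one of the two horizontal candidates after the rung step acquires a single occupied neighbor (energy $C$) while the other remains at energy $1$, with no further occupied neighbors appearing on either side. A secondary point worth checking is that, unlike the one-sided twist, which required a separate treatment of the length-$2$ base path, the double-sided twist obeys the uniform formula already at $n = 0$ (where $T_{0}=ENE$ still incurs the $\frac{1}{1+C}$ factor on its final step), so no exceptional base case arises and the single geometric sum suffices.
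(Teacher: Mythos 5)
Your proof is correct, and it takes essentially the approach the paper intends: the paper states this theorem without proof, but your argument is exactly the adaptation its pattern dictates, namely the unbiased double-sided twist computation (symmetry giving the $2/3$ prefactor for the first horizontal step, intermediate steps of probability $1/2$) with the final east step's probability changed from $1/2$ to $1/(1+C)$, precisely as in the paper's biased one-sided twist lemma. Your energy bookkeeping at the turn (west candidate adjacent to one vertex of $P\setminus v_{n}$, hence energy $C$; east candidate energy $1$), the resulting $P(T_{n}) = 1/(3\cdot 2^{n}(1+C))$ with length $n+3$ and width $n+2$, and the geometric sum — including the observation that $n=0$ needs no special treatment — all check out and yield the stated generating function.
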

	
	\begin{theorem}
		Let $L_{\infty}(x,y)$ be the bivariate ordinary generating function for a GSAW that terminates.
		Then
		\begin{displaymath}
			L_{\infty}(x,y) = (H(x,y) + T(x,y))\underline{L}^{\ast}_{\infty}(x,y)
		\end{displaymath}
	\end{theorem}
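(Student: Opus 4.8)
The plan is to mirror the decomposition used in the unbiased double-sided case, but carefully track the ``memory'' that the nearest-neighbor weighting introduces. First I would invoke the symmetry of $L_{\infty}$ to fix an arbitrary starting vertex without loss of generality, since every vertex of $L_{\infty}$ is interchangeable. As in the unbiased argument, I would then observe that with probability $1$ the walk eventually takes a vertical step: the only walks that never do so are an infinite run of $E$ steps or an infinite run of $W$ steps (the walk stays in its starting row and must be monotone), and each such event has probability $0$. Hence, with probability $1$, the opening behavior of the walk is determined by its first vertical step together with the step immediately after it, so the first segment is either a hook path or a twist path on $L_{\infty}$.

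Next I would split on the type of first segment. If the first segment is a hook path, the walk is trapped the moment it completes the hook, and these contributions are exactly the generating function $H(x,y)$ supplied by the preceding theorem. If instead the first segment is a twist path, the walk makes its vertical step and turns east; after this turn it can never return to the column immediately west of its current position, so the remainder is confined to a half-infinite ladder, and the contributions of all first twist segments are collected by $T(x,y)$.

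The crucial point, and where the weighted case diverges from the unweighted one, is the identification of the environment seen by the continuation after a twist. On $L_{\infty}$ the twist leaves an occupied column immediately to the west; for the unweighted process this column is invisible to the future and the continuation would be a plain GSAW on $L^{\ast}_{\infty}$, but under nearest-neighbor weighting the two occupied western sites contribute to the energies of all later steps. This is precisely the configuration encoded by $\underline{L}^{\ast}_{\infty}$ (Figure~\ref{figure:one-sided-ladder-base-weight}): a half-ladder whose initial column is treated as occupied for the energy calculation. I would therefore argue that the continuation after a twist is distributed exactly as a GSAW with nearest-neighbor weighting on $\underline{L}^{\ast}_{\infty}$.

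The hard part will be verifying that no portion of the first segment other than this single western column ever enters the nearest-neighbor calculation of any future step: one must check that once the walk has twisted and is proceeding east on the half-ladder, the only previously occupied sites adjacent to any vertex it can still reach are exactly the two sites of that western column, while all earlier portions of the approach are too far west to be neighbors of any reachable vertex. Granting this locality, the first segment and the continuation are independent, so their probabilities convolve in both the length and width variables and the generating functions multiply. Combining the two cases then yields
\begin{displaymath}
    L_{\infty}(x,y) = \big(H(x,y) + T(x,y)\big)\,\underline{L}^{\ast}_{\infty}(x,y),
\end{displaymath}
as claimed.
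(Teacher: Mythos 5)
Your overall strategy---decompose the walk into a first segment (hook or twist) followed by a continuation, and argue that the nearest-neighbor memory of the first segment is exactly the occupied column encoded by $\underline{L}^{\ast}_{\infty}$---is the right one, and it is the argument the paper leaves implicit (this theorem is stated without proof, following the pattern of the unbiased double-sided case). However, there is a genuine error in your case analysis: you claim that ``if the first segment is a hook path, the walk is trapped the moment it completes the hook.'' That is true only on the \emph{one-sided} ladders $L^{\ast}_{\infty}$ and $\underline{L}^{\ast}_{\infty}$, where the wall blocks any escape. On the double-sided ladder $L_{\infty}$, the hook $H_{n} = E\cdots E\,N\,W\cdots W$ overshoots the starting column by one: it ends at the vertex one column \emph{west} of the start, on the opposite row, and that vertex still has two unoccupied neighbors (to its west and directly below/above it). So the walk is not trapped; it continues into the half-infinite ladder extending westward, and since the starting column (both of its sites) is now fully occupied and sits immediately east of the walk's position, the continuation is---by the same locality argument you give for the twist, after a reflection---precisely a GSAW with nearest-neighbor weighting on $\underline{L}^{\ast}_{\infty}$. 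This is exactly why the theorem multiplies \emph{both} $H(x,y)$ and $T(x,y)$ by $\underline{L}^{\ast}_{\infty}(x,y)$.

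As written, your decomposition proves the different identity $L_{\infty}(x,y) = H(x,y) + T(x,y)\,\underline{L}^{\ast}_{\infty}(x,y)$, so your final line does not follow from your cases; the proof is internally inconsistent. The error is also detectable numerically: your formula would give a trapping probability of $1/3$ at length $2$ (the $x^{2}y/3$ term of $H$), but no walk on $L_{\infty}$ can be trapped in fewer than $5$ steps, since the terminal vertex of a two-step walk always has unoccupied neighbors. The fix is short: treat the hook case exactly as you treated the twist case, noting that the hook's terminal vertex is the corner of the unvisited half-ladder and the adjacent column is fully occupied, so the hook contributes $H(x,y)\,\underline{L}^{\ast}_{\infty}(x,y)$; combining with the twist case then yields $(H(x,y)+T(x,y))\,\underline{L}^{\ast}_{\infty}(x,y)$ as claimed (the length and width additivity you invoke for the convolution holds in both cases).
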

	
	\begin{corollary}
		The probability that a GSAW with nearest-neighbor weighing on $L_{\infty}$ terminates is 1.
	\end{corollary}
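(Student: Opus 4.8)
The plan is to mirror the unbiased termination corollaries proved earlier in the excerpt. The probability that a nearest-neighbor weighted GSAW on $L_\infty$ terminates is exactly the total mass of the terminating-walk distribution, namely $\sum_{\ell,w}p_{\ell,w}$, which is precisely the value of the bivariate generating function at $x=y=1$. So it suffices to verify $L_\infty(1,1)=1$; the complementary non-termination probability $q=1-L_\infty(1,1)$ is then forced to be $0$.

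First I would invoke the factorization from the preceding theorem, $L_\infty(x,y)=(H(x,y)+T(x,y))\,\underline{L}^\ast_\infty(x,y)$, and reduce the claim to two evaluations at $(1,1)$. Substituting $x=y=1$ into the closed forms for $H$ and $T$ and clearing the common denominator $3(1+C)$, one finds $H(1,1)=(1+3C)/(3(1+C))$ and $T(1,1)=2/(3(1+C))$, so that $H(1,1)+T(1,1)=1$. Conceptually this records that with probability $1$ the walk eventually takes its first $N$ step and, at that moment, commits to exactly one of a hook segment or a twist segment; since these are the only two possibilities, their probabilities must sum to $1$.

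Next I would handle the renewal factor $\underline{L}^\ast_\infty(x,y)=\underline{H}^\ast(x,y)/(1-\underline{T}^\ast(x,y))$. Evaluating the formulas for $\underline{H}^\ast$ and $\underline{T}^\ast$ at $(1,1)$ and adding, the two ``middle'' terms combine as $(C^2+1)/(2(1+C)(1+C^2))=1/(2(1+C))$ and the two ``tail'' terms combine as $(C+1)/(2(1+C)^2)=1/(2(1+C))$, leaving $\underline{H}^\ast(1,1)+\underline{T}^\ast(1,1)=C/(1+C)+1/(1+C)=1$. Since $\underline{H}^\ast(1,1)>0$ for every $C>0$, this identity forces $\underline{T}^\ast(1,1)<1$, so the geometric renewal series $\underline{H}^\ast(1+\underline{T}^\ast+(\underline{T}^\ast)^2+\cdots)$ converges at $(1,1)$ and $\underline{L}^\ast_\infty(1,1)=\underline{H}^\ast(1,1)/(1-\underline{T}^\ast(1,1))=1$. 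Combining the two evaluations gives $L_\infty(1,1)=1\cdot 1=1$, as required.

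The main obstacle is not conceptual but arithmetic-organizational: one must confirm that $H(1,1)+T(1,1)=1$ and $\underline{H}^\ast(1,1)+\underline{T}^\ast(1,1)=1$ hold as identities in the parameter $C$ for all admissible $C>0$, and in particular that the denominators $1-\underline{T}^\ast(1,1)$, $1+C$, and $1+C^2$ never vanish, so that the renewal series is genuinely summable and no division-by-zero occurs. The cleanest route is to establish the two ``sum to one'' identities first, since each simply asserts that a single decomposition step is exhaustive, and together they make the convergence of the underlying geometric renewal automatic and the final product collapse to $1$.
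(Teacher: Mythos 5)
Your proof is correct and takes essentially the same approach the paper uses for all of its termination corollaries: the termination probability equals the generating function evaluated at $(1,1)$, computed here through the factorization $L_{\infty}(x,y)=(H(x,y)+T(x,y))\,\underline{L}^{\ast}_{\infty}(x,y)$. In fact the paper states this particular corollary with no proof at all, so your explicit checks that $H(1,1)+T(1,1)=1$ and $\underline{H}^{\ast}(1,1)+\underline{T}^{\ast}(1,1)=1$ (hence $\underline{L}^{\ast}_{\infty}(1,1)=1$), together with the observation that $\underline{H}^{\ast}(1,1)>0$ forces $\underline{T}^{\ast}(1,1)<1$ so the renewal series genuinely converges, supply exactly the details the paper leaves implicit.
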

	
	\begin{corollary}
		The expected number of steps taken before a GSAW with nearest-neighbor weighing on $L_{\infty}$ terminates is
		\begin{displaymath}
		    \frac{2 \left(6 C^5+41 C^4+56 C^3+51 C^2+38 C+12\right)}{3 C (C+1) \left(2 C^2+C+1\right)}
		\end{displaymath}
		with variance
		\begin{center}
		    \begin{displaymath}\Scale[1.2]{
		        \frac{2 \left(72 C^{10}+360 C^9+1312 C^8+2975 C^7+4693 C^6+5864 C^5+5822 C^4+4551 C^3+2773 C^2+1242 C+288\right)}{9 C^2 (C+1)^2 \left(2 C^2+C+1\right)^2}}.
		    \end{displaymath}
		\end{center}
		The expected width is
		\begin{displaymath}
		    \frac{6 C^4+32 C^3+28 C^2+28 C+18}{6 C^3+3 C^2+3 C}
		\end{displaymath}
		with variance
		\begin{displaymath}
		    \frac{2 \left(18 C^8+54 C^7+229 C^6+445 C^5+590 C^4+686 C^3+541 C^2+315 C+162\right)}{9 C^2 \left(2 C^2+C+1\right)^2}.
		\end{displaymath}
	\end{corollary}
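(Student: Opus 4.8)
The plan is to read the claimed moments directly off the bivariate generating function $L_\infty(x,y)$ supplied by the immediately preceding theorem, namely
\begin{displaymath}
	L_\infty(x,y) = \left(H(x,y) + T(x,y)\right)\underline{L}^{\ast}_\infty(x,y),
\end{displaymath}
treating it as a joint probability generating function in the length variable $x$ and the width variable $y$. Since the previous corollary asserts $L_\infty(1,1) = 1$, every coefficient is a genuine probability and the standard factorial-moment machinery applies. My first step would be to substitute the explicit closed forms of $H$, $T$, and $\underline{L}^{\ast}_\infty = \underline{H}^{\ast}/(1-\underline{T}^{\ast})$ to obtain $L_\infty(x,y)$ as a single rational function of $x$, $y$, and the self-attraction parameter $C$.

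For the length statistics I would specialize to $y=1$, giving the univariate length generating function $L_\infty(x,1)$, and then compute, exactly as in the unbiased corollaries,
\begin{displaymath}
	\mathbb{E}(\ell) = \left.\frac{\partial}{\partial x}L_\infty(x,1)\right|_{x=1},
	\qquad
	\mathbb{V}(\ell) = \left.\frac{\partial^2}{\partial x^2}L_\infty(x,1)\right|_{x=1} + \mathbb{E}(\ell) - \mathbb{E}(\ell)^2.
\end{displaymath}
The width statistics follow symmetrically by setting $x=1$ and differentiating $L_\infty(1,y)$ with respect to $y$. In each case one evaluates a rational function and its first two derivatives at the point $1$; because the factors $2-x^2 y$, $2-xy$, $1+C$, and $1-\underline{T}^{\ast}$ are all nonzero there, every evaluation is a finite limit and the computation is purely mechanical.

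The genuine obstacle is computational rather than conceptual: $L_\infty(x,y)$ is a ratio of high-degree polynomials in three variables, so forming $\partial_x^2 L_\infty(x,1)$ and collecting it over the stated common denominator $3C(C+1)(2C^2+C+1)$ for the mean, and $9C^2(C+1)^2(2C^2+C+1)^2$ for the variance, produces lengthy intermediate expressions that must be carefully simplified to match the claimed numerators. I would organize this by first applying the quotient rule to $\underline{H}^{\ast}/(1-\underline{T}^{\ast})$ and only then multiplying by the polynomial-in-disguise factor $H+T$, so that a single division is differentiated and the remaining factors contribute only polynomial derivatives at $1$.

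Verification of the final rational expressions is best confirmed with a computer algebra system, and I would use the following consistency check throughout: setting $C=1$ reduces the self-attraction weighting to the uniform case, so each formula must reproduce the unbiased double-sided values from the earlier corollary. Indeed, at $C=1$ the mean collapses to $408/24 = 17$, the length variance to $59904/576 = 104$, and the expected width to $112/12 = 28/3$, matching the stated unbiased results and giving confidence that the symbolic differentiation has been carried out correctly.
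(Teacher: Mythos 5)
Your proposal is correct and is exactly the approach the paper intends: the paper states this corollary with no proof at all, relying on the same generating-function differentiation machinery it spelled out for the earlier unbiased corollaries (specialize $L_{\infty}(x,y)$ at $y=1$ or $x=1$, differentiate, and use $\mathbb{V} = f''(1)+f'(1)-f'(1)^{2}$), which is precisely what you describe. Your $C=1$ consistency check is a sensible safeguard for the CAS computation, and your arithmetic there (mean $408/24=17$, variance $59904/576=104$, width $112/12=28/3$) is correct.
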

	

\section{Discussion}

Figure \ref{fig:trap2} shows the trapping probability distributions in the square and triangular strips from the generating functions in Theorems~\ref{theorem:trapping-length-square-lattice} and~\ref{theorem:trapping-length-triangle-lattice}. Several features are apparent which can be compared to those observed in infinite lattices (Figure \ref{fig:trapinf}, Appendix). There is strong even-odd asymmetry for small trapping length in the square lattice, but not the triangular lattice. The triangular lattice has a higher mean trapping length than the square lattice, but the triangular probability distribution decays with a weaker exponential tail compared to the square lattice. All of these features are observed when comparing the infinite square and triangular lattices. 

\begin{figure}
    \centering
    \includegraphics[width=0.8\textwidth]{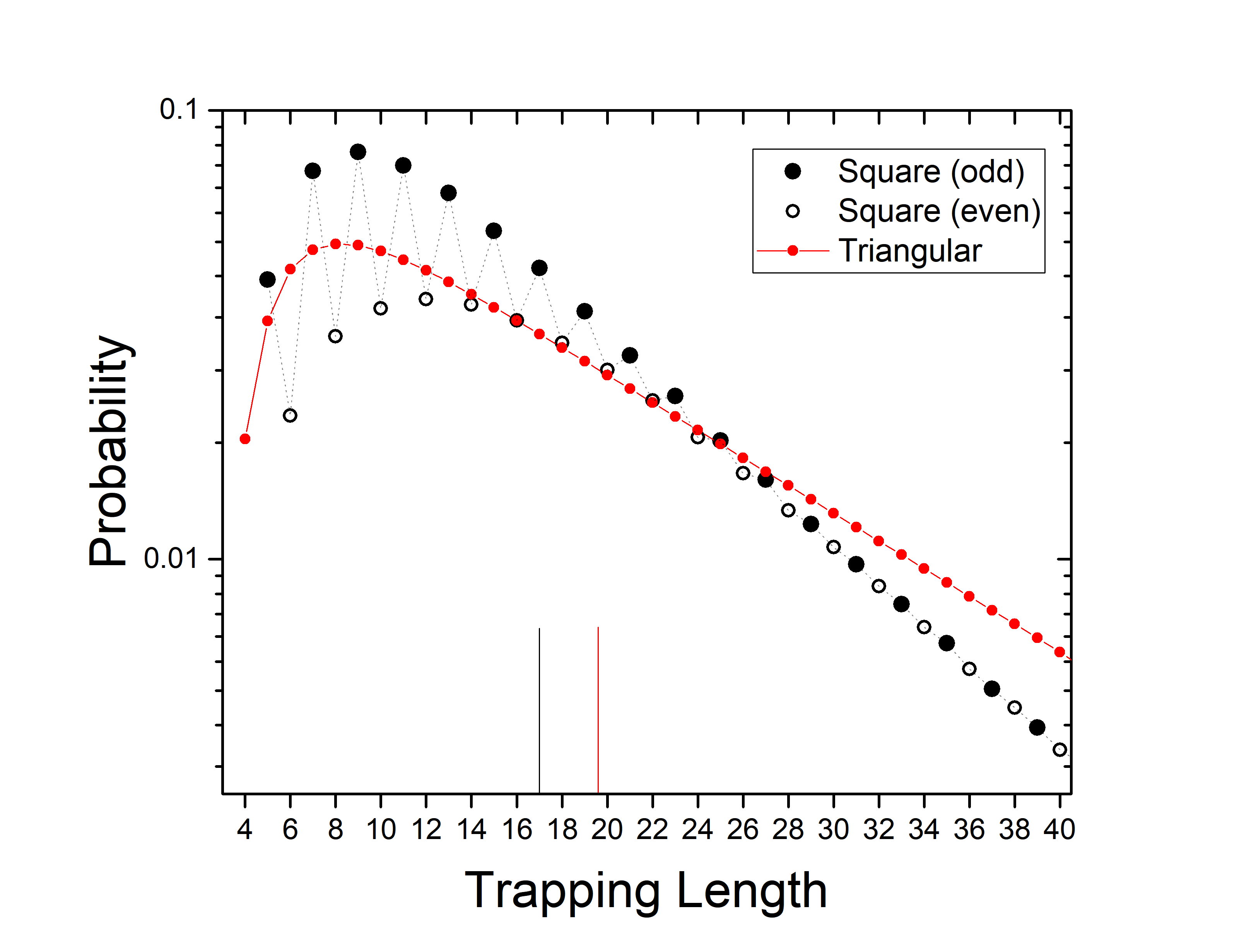}
    \caption{Trapping probability distribution of ladder graphs on the square and triangular lattices, according to Theorems~\ref{theorem:trapping-length-square-lattice} and~\ref{theorem:trapping-length-triangle-lattice}. Short walks on the square lattice are much more likely to become trapped after an odd number of steps. The trapping probability decays exponentially as approximately $0.93^N$ for the triangular case and $0.90^N$ for the square case. The vertical lines indicate the mean trapping lengths.}
    \label{fig:trap2}
\end{figure}


The decay constant of the exponential tails of the probability distributions can be exactly solved as the reciprocals of the poles of the generating functions that are closest to the origin \cite{wilf2005generatingfunctionology}. For the square lattice, the relevant pole is the reciprocal of the largest root of the cubic polynomial in the denominator of the generating function. In the long-walk limit, the square lattice probability distribution decays as $\left(\cos(\pi/7)\right)^N\approx 0.901^N$. For the triangular lattice, the relevant pole is the reciprocal of the largest real root of the quartic polynomial in the denominator of the generating function.  In the long-walk limit, the triangular lattice probability distribution decays as approximately $0.93^N$. The weaker exponential decay on the triangular lattice is consistent with a longer mean trapping length, as well as the asymptotic behavior on infinite lattices, seen in the inset of Figure \ref{fig:trapinf}.

The generating functions for the square and triangular lattices are equivalent to inhomogeneous recursion relations. The square lattice has a third-order recursion relation with alternating even and odd inhomogeneous terms decaying as a negative power of two:
\begin{equation}
P_{N}=\frac{P_{N-1}}{2}+\frac{P_{N-2}}{2}-\frac{P_{N-3}}{8}+
  \begin{cases}
   -\left(N-6\right)\cdot 2^{-(\frac{N+4}{2})}& \text{Even N} \\
    \ \ \left(N-3\right)\cdot 2^{-(\frac{N+3}{2})}  & \text{Odd N}   
  \end{cases}.
\end{equation}
The triangular lattice has a fourth-order recursion relation with strictly positive inhomogeneous terms decaying as a negative power of three:

\begin{equation}
  P_{N}=\frac{5}{6}P_{N-1}-\frac{1}{18}P_{N-2}+\frac{1}{6}P_{N-3}-\frac{1}{36}P_{N-4}+\frac{1}{16}\cdot
  \begin{cases}
    \left(\frac{5}{2}N+7\right)\cdot 3^{-(\frac{N+2}{2})}& \text{Even N} \\
    \left(\frac{3}{2}N+\frac{7}{2}\right)\cdot 3^{-(\frac{N+1}{2})}  & \text{Odd N}
  \end{cases}.
\end{equation}

The initial values in these recursion relations are 1/24, 1/48, and 7/96 for N=5, 6, 7 in the square case and 1/54, 11/324, 43/972, and 95/1944 for N=4, 5, 6, 7 for the triangular case.  The characteristic polynomials of each homogeneous component of the recursion relations have zeros that describe the asymptotic behavior discussed previously. The inhomogeneous components, representing the contribution to the probability of ladder graphs that cannot be constructed by appending steps to smaller graphs, highlight the striking parity effects seen only in the square lattice. Both inhomogeneous components vanish with large N, reducing to the asymptotic homogeneous trend. In the square lattice, the inhomogeneous terms alternate between positive and negative, putting the probability on either side of a trendline. The triangular inhomogeneous terms are strictly positive, and decay more strongly as a power of one-third rather than one-half, resulting in a smooth probability distribution.

The weighted walks on a square lattice have a mean trapping length (Corollary 2.61) that is non-monotonic with respect to the trapping length and is plot in Fig. \ref{fig:bias}. There is a global minimum at $C\approx1.63$ which in the thermodynamic formalism \cite{klotz} corresponds to approximately $kT/2$,\footnote{kT being the thermal energy scale, the product of temperature $T$ and Boltzmann constant $k$.} slightly larger than for the infinite square lattice which has a global minimum at $\approx kT/3$ or $C$=1.4. The asymptotic behavior of Corollary 2.61 is that the mean trapping length grows as $\langle N\rangle=2C$, which is equivalent to the exponential behavior seen in the infinite lattice. Corollary 2.61 also admits values of $C$ below 1, corresponding to walks that avoid self-adjacency. The mean trapping length increases asymptotically as $C$ approaches zero, and may be interpreted as the behavior as a polymer with strong electrostatic (rather than excluded volume) repulsion. Each probability distribution for a weighted lattice walk has an exponential decay constant that is the reciprocal of a quartic root (for $C\neq$ 1). The decay constant has a global minimum at a value of $C\approx1.75$, slightly greater than the position of the global minimum of the mean trapping length , $C\approx1.63$.

\begin{figure} 
    \centering
    \includegraphics[width=0.8\textwidth]{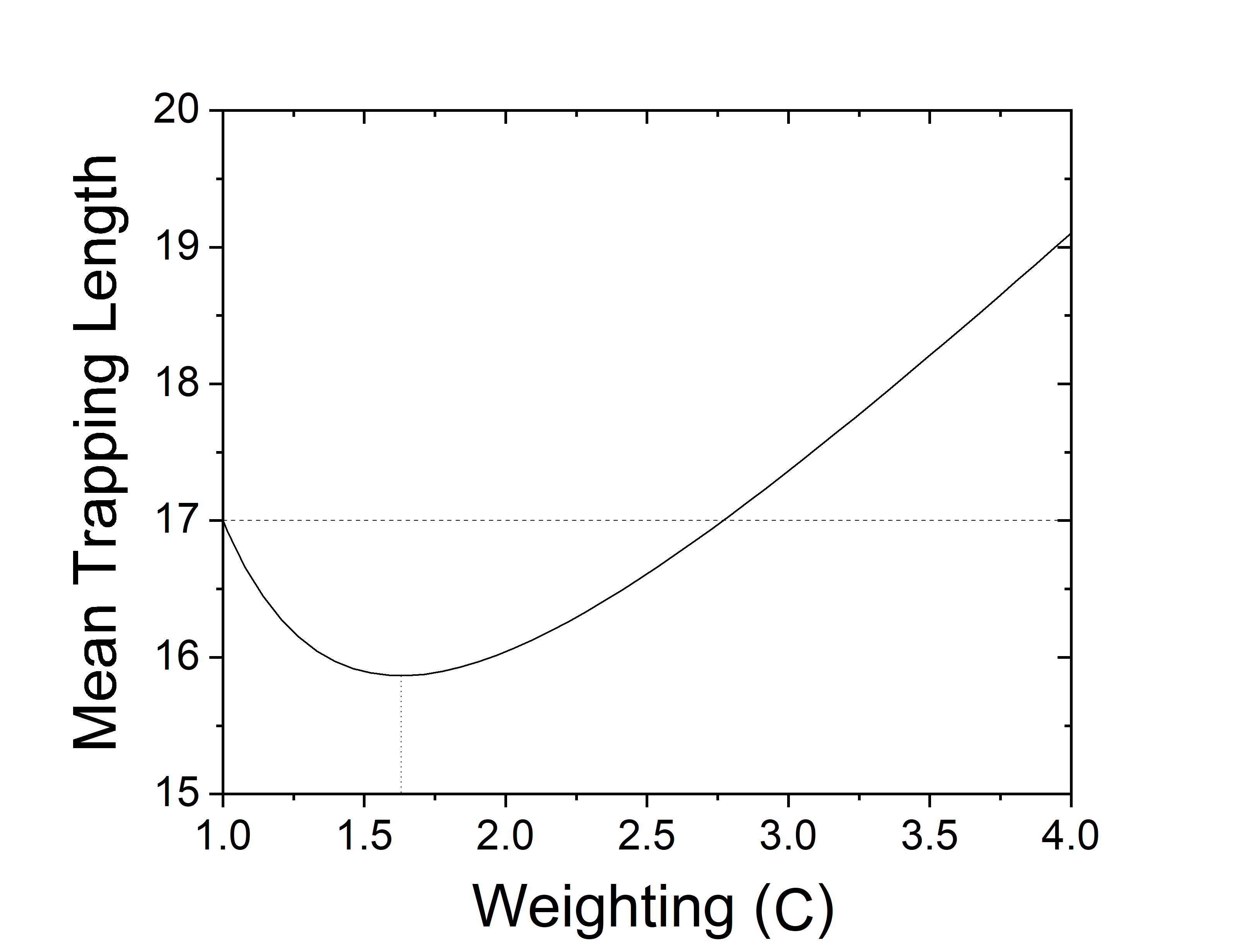}
    \caption{Mean trapping length as a function of the weighting factor $C$ for biased walks according to Corollary 2.61. There is a global minimum at $C=1.63$ and the growth is asymptotically linear in $C$.}
    \label{fig:bias}
\end{figure}

\section{Conclusion} We have developed an exactly solvable model of the growing self-avoiding walk. It allows us to exactly derive the mean trapping length of walks on square and triangular lattices that are two sites high. Additionally, we have shown that the exact probability distributions mirror the behavior seen in infinite square and triangular lattices, as well as empirical behavior observed for biased GSAWs. Restricting the lattice to two sites in height prevents combinatorial explosion while admitting non-trivial behavior. Strips of height greater than two are significantly more complex and will be discussed in Part II~\cite{saws-part2}. We hope that this work inspires further investigation into exact solutions for stochastic systems in statistical mechanics.


\section{Acknowledgements}
We are grateful to Jay Pantone for insightful discussions during the writing of this manuscript. We also acknowledge helpful suggestions from Tony Guttmann and Nathan Clisby. ARK is supported by the National Science Foundation, grant number 2105113.

\section{Appendix}

\subsection{Infinite Lattices: numerical results}

To compare the ladder walks with those of unrestricted lattices, we compute the trapping probability distribution on square, triangular, and honeycomb lattices. A walk was initiated in the center of a 512 by 512 matrix, with steps to unoccupied adjacent sites chosen by a random number generator in MATLAB. Previously, the trapping distribution of 60,000 walks was reported by Renner \cite{renner1996self}. Here, we present histograms of the trapping distributions after 1,000,000 simulated walks. Approximately ten walks out of each million reached the wall of simulation box, which we do not consider to have a significant effect.

The even-odd asymmetry of the square lattice has been established \cite{hemmer1984average,klotz}. The triangular lattice lacks this asymmetry and has a trapping probability distribution that is much smoother. Trapping in the honeycomb lattice is most likely at a multiple of four plus one. The square and honeycomb probability distributions have very similar exponential behavior, while the triangular case decays with a weaker constant.

\begin{figure}
    \centering
    \includegraphics[width=0.8\textwidth]{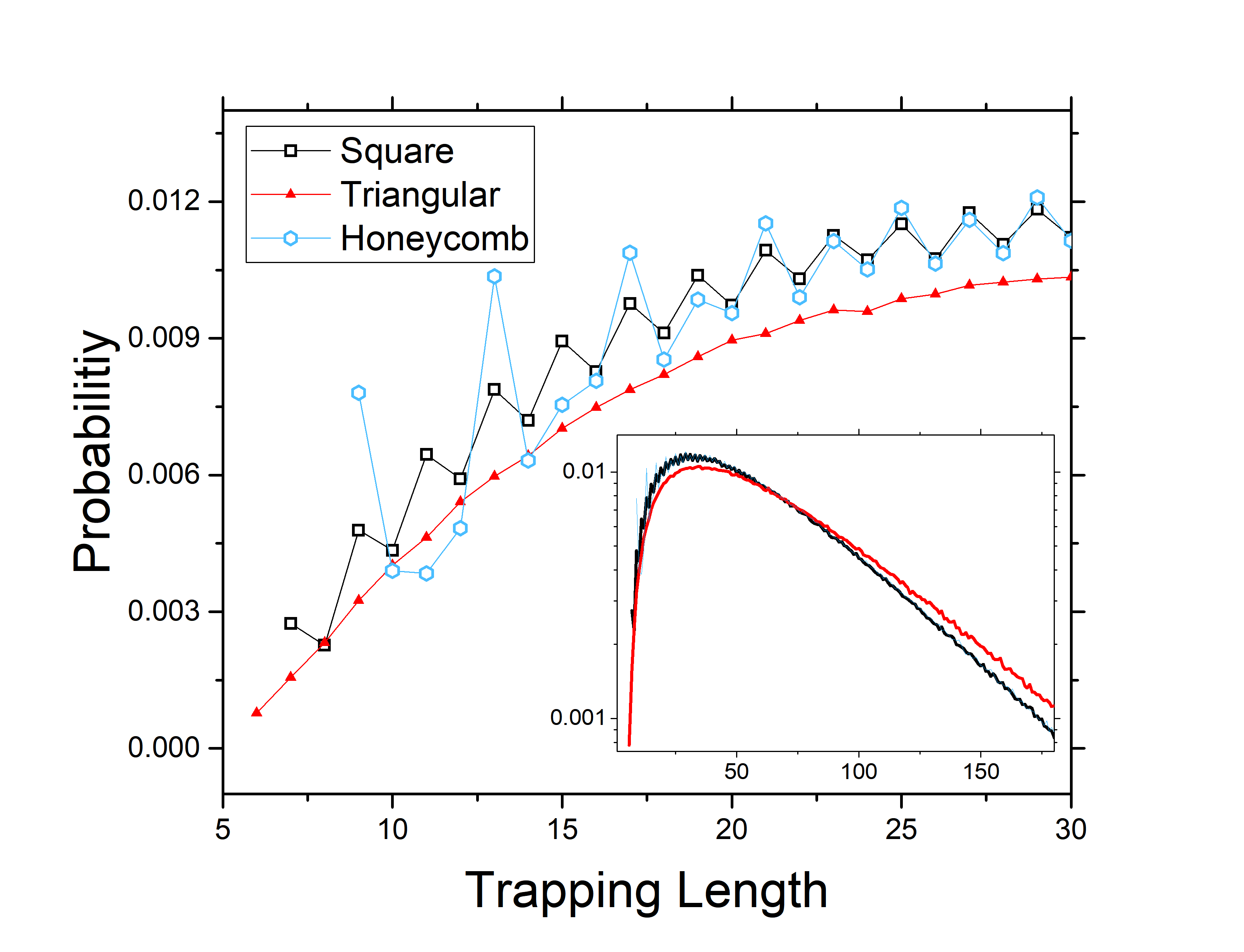}
    \caption{Trapping probability distribution for GSAWs on the square, triangular, and honeycomb lattices. Inset shows the same data on semi-logarithmic axes. For walks trapped after a short distance, square lattice walks are more likely to become trapped after an odd number of steps, the triangular lattice imposes a smooth distribution, and walks on the honeycomb lattice is most likely to be trapped at a multiple of four plus one. For walks trapped after a long distance, the square and honeycomb lattices have the same asymptotic exponenitial behavior while the triangular lattice has a weaker exponential decay.}
    \label{fig:trapinf}
\end{figure}


\bibliographystyle{plainnat}
\bibliography{walkrefs}

\end{document}